\documentclass[11pt,a4paper,reqno]{amsart}

\usepackage[left=22mm,right=22mm,top=22mm,bottom=22mm]{geometry}
\usepackage[nodisplayskipstretch]{setspace}
\usepackage{amsmath,amssymb,amsthm,mathtools,enumitem}
\usepackage{microtype,needspace}
\usepackage[
  colorlinks=true,
  linkcolor=blue,
  citecolor=blue,
  urlcolor=blue
]{hyperref}

\setlist{nosep}
\newcommand{\proofstep}[1]{%
  \par\addvspace{3pt}\Needspace{3\baselineskip}%
  \noindent\textit{#1}\quad
}
\hypersetup{bookmarksnumbered=true,pdfauthor={Tuan Tran}}
\newtheorem{theorem}{Theorem}[section]
\newtheorem*{deterministic}{Deterministic scaling}
\newtheorem{lemma}[theorem]{Lemma}
\newtheorem{proposition}[theorem]{Proposition}
\newtheorem{corollary}[theorem]{Corollary}
\newtheorem{conjecture}{Conjecture}[section]
\theoremstyle{definition}
\theoremstyle{remark}\newtheorem{remark}[theorem]{Remark}
\newcommand{\E}{\mathbb E}
\newcommand{\PP}{\mathbb P}
\newcommand{\R}{\mathbb R}
\newcommand{\one}{\mathbf1}
\newcommand{\ones}{\mathbf{1}}
\newcommand{\cK}{\mathcal K}
\newcommand{\cF}{\mathcal F}
\newcommand{\cE}{\mathcal E}
\newcommand{\cD}{\mathcal D}
\newcommand{\cB}{\mathcal B}

\newcommand{\T}{\mathsf{T}}
\newcommand{\norm}[1]{\lVert#1\rVert}

\DeclareMathOperator{\diag}{diag}
\DeclareMathOperator{\Var}{Var}
\DeclareMathOperator{\Ent}{Ent}

\numberwithin{equation}{section}

\title[Fractional clique decompositions via matrix scaling]{The threshold for fractional clique decompositions of random hypergraphs, via matrix scaling}
\author{Tuan Tran}
\address{School of Mathematical Sciences, University of Science and
Technology of China}
\email{trantuan@ustc.edu.cn}
\date{}

\begin{document}
\begin{abstract}
Let $\delta^*_{k,r}$ be the fractional $K_r^{(k)}$-decomposition
threshold in minimum codegree. For fixed $k\ge2$, $r\ge k+1$ and
$\varepsilon>0$, we prove that every $n$-vertex $k$-uniform
hypergraph $G$ with minimum codegree at least
$(\delta^*_{k,r}+\varepsilon)n$ admits, with high probability, a
fractional $K_r^{(k)}$-decomposition after retaining each edge
independently with probability $p \ge C(\log n/n^{r-k})^{1/(\binom{r}{k}-1)}$.
The bound on $p$ is sharp up to a constant factor, confirming
the conjectured threshold order in the graph case.
The proof develops a probabilistic matrix-scaling approach.
\end{abstract}
\maketitle
\pagestyle{plain}
\flushbottom

\section{Introduction}
\subsection{Background and main result}
Steiner triple systems are among the oldest and best-known objects
in design theory. A \emph{Steiner triple system} on $n$ points is a
collection of triples in which every pair of points belongs to exactly
one triple. In 1847, Kirkman~\cite{Kirkman} proved that such a system
exists exactly when $n\equiv1,3\pmod6$. Viewing the points as vertices
and the pairs as edges identifies a Steiner triple system with a
partition of the edges of $K_n$ into triangles.

More generally, a \emph{$K_r^{(k)}$-decomposition} of a $k$-uniform
hypergraph $G$ is a partition of its edges into
copies of $K_r^{(k)}$, the complete $k$-uniform hypergraph on $r$ vertices.
For every set $S$ of fewer than $k$ vertices, the number of edges
containing $S$ must then be divisible by
$\binom{r-|S|}{k-|S|}$. In the complete host these necessary
conditions are sufficient for all sufficiently large orders:
Wilson~\cite{Wilson} proved the graph case, and
Keevash~\cite{Keevash} proved the general existence theorem for designs.
For incomplete hosts, a central question is which density conditions
make divisibility sufficient.

The fractional relaxation plays a central role in these density
questions. A \emph{fractional
$K_r^{(k)}$-decomposition} of $G$ assigns a nonnegative weight to each
copy of $K_r^{(k)}$ so that, for every edge, the weights of the copies
containing it sum to one. For graphs, Haxell and R\"odl~\cite{HR}
proved that the fractional and integral
packing numbers for a fixed graph differ by $o(n^2)$ in an
$n$-vertex host. The iterative absorption method of Barber, K\"uhn,
Lo and Osthus~\cite{BKLO} converts suitable approximate decompositions
into exact ones. Glock, K\"uhn, Lo, Montgomery and Osthus~\cite{GKLMO}
showed that the asymptotic minimum-degree thresholds for fractional
and integral clique decompositions in graphs agree, with divisibility
required in the integral case.

In the random setting, Yuster~\cite{Yuster-survey}
asked for the threshold for fractional triangle decompositions of
the Erd\H{o}s--R\'enyi random graph $G(n,p)$.
Even the existence of such a threshold is not
guaranteed, since fractional decomposability is not monotone under
edge addition.
Nevertheless, the necessary local condition that every edge lie in
a copy of $K_r$ suggests a natural candidate for the threshold.
This condition is widely believed to be sufficient, motivating the following conjecture.

\begin{conjecture}[cf.~\cite{Yuster-survey,DKP,MS}]
\label{conj:clique-threshold}
For every integer $r\ge3$, there is a constant $C_r>0$ such
that, if $p\ge C_r n^{-2/(r+1)}(\log n)^{1/(\binom{r}{2}-1)}$, then
with high probability $G(n,p)$ has a fractional $K_r$-decomposition.
\end{conjecture}

Earlier results left a polynomial gap from this predicted scale.
Delcourt, Kelly and Postle~\cite{DKP} proved
that $G(n,p)$ has a fractional triangle decomposition with high
probability when $p\ge n^{-1/3+o(1)}$, and a fractional
$K_r$-decomposition for $r\ge4$ when
$p\ge n^{-2/(2r+1)+o(1)}$. Their fractional bounds were limited
by the need to extend copies of $K_r$ to $K_{r+2}$, and they asked
for methods that avoid this restriction. Mahabaduge and
Simkin~\cite{MS} overcame this barrier for triangles, improving the
range to $p\ge n^{-4/11+o(1)}$ by directly correcting triangle
weights with local gadgets.

We resolve the conjecture through a more general result for random
subhypergraphs of dense hosts. Delcourt, Kelly and
Postle~\cite{DKP} proposed this setting for graph clique
packings as a way to connect random and minimum-degree decomposition
problems. The dense-host formulation also connects our result to a
broader program in extremal and probabilistic design theory: in his ICM survey,
Postle~\cite{PostleICM} calls for a unified theory
encompassing minimum-degree conditions, high girth, spread
distributions, and partite settings.

To state our result, let $G_p$ be obtained from a fixed $k$-uniform hypergraph $G$ by retaining each edge independently with probability $p\in[0,1]$. For $G=K_n^{(k)}$, this is the $k$-uniform binomial
random hypergraph $G^{(k)}(n,p)$. The \emph{minimum codegree} $\delta_{k-1}(G)$ is the smallest number of edges containing a $(k-1)$-set of vertices. We write $\delta^*_{k,r}$ for the infimum of all $d\in[0,1]$ such that, for every $\varepsilon>0$, every sufficiently large $n$-vertex $k$-uniform hypergraph $G$ with $\delta_{k-1}(G)\ge(d+\varepsilon)n$ has a fractional $K_r^{(k)}$-decomposition. Estimating $\delta^*_{k,r}$ is notoriously
difficult~\cite{BKLMO,DHLP-counterexamples,DLP,Montgomery}.
The only exact value currently known is $\delta^*_{2,3}=3/4$,
established by Delcourt and Postle~\cite{DP26} in their proof of
Nash-Williams' conjecture.

Our main theorem transfers fractional decomposability from hosts
above the threshold $\delta^*_{k,r}$ to their random subhypergraphs,
at the probability scale suggested by local clique coverage.

\Needspace{7\baselineskip}
\begin{theorem}
\label{thm:minimum-degree}
For all integers $k\ge2$, $r\ge k+1$ and every
$\varepsilon,b>0$, there are constants $C,n_0>0$ such that the following
holds. Let $G$ be a $k$-uniform hypergraph with $n\ge n_0$ vertices and minimum codegree $\delta_{k-1}(G)\ge(\delta^*_{k,r}+\varepsilon)n$.
If $p\ge C(\log n/n^{r-k})^{1/(\binom{r}{k}-1)}$, then with probability at least $1-n^{-b}$, $G_p$ admits a fractional
$K_r^{(k)}$-decomposition.
\end{theorem}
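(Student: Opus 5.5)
The plan is to start from a fractional decomposition of the dense host and repair it inside $G_p$ by multiplicative reweighting, which amounts to a matrix-scaling problem. Since $\delta_{k-1}(G)\ge(\delta^*_{k,r}+\varepsilon)n$, applying the definition of $\delta^*_{k,r}$ with $\varepsilon/2$ to suitable dense subhypergraphs of $G$ gives a robust fractional decomposition. By averaging over many random sub-hosts, which still have codegree at least $(\delta^*_{k,r}+\varepsilon/2)n$, I will first obtain a \emph{spread} fractional decomposition $\psi$ of $G$: every clique receives weight $\psi(K)=\Theta(n^{-(r-k)})$, and the weight at every edge is spread over $\Theta(n^{r-k})$ cliques, none carrying more than an $O(n^{-(r-k)})$ share. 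In $G_p$, each edge $e$ of $G_p$ lies in about $\mu:=p^{\binom rk-1}n^{r-k}\ge C^{\binom rk-1}\log n$ surviving cliques. The natural initial weighting is $w_0(K)=\psi(K)/p^{\binom rk-1}$ on cliques of $G_p$, and its edge sums $s(e)=\sum_{K\ni e}w_0(K)$ are concentrated around $1$ with relative error $O(\sqrt{\log n/\mu})$, which is at most a small constant $\eta$ once $C$ is large. I will prove this with Janson's inequality or Kim--Vu polynomial concentration, conditioning on $e\in G_p$ and using the spread of $\psi$ to control the correlations between cliques sharing edges. A union bound over $O(n^k)$ edges then costs only $n^{-b-1}$ once $C$ is chosen in terms of $b$.

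The second step is an exact correction by scaling. I look for edge multipliers $x_e>0$ and set $w(K)=w_0(K)\prod_{e\subset K}x_e$. The requirement $\sum_{K\ni e}w(K)=1$ for all $e\in G_p$ is a multi-dimensional scaling problem, equivalently the stationarity condition of the convex potential
\[
\Phi(y)=\sum_{K}w_0(K)\exp\Bigl(\sum_{e\subset K}y_e\Bigr)-\sum_{e}y_e .
\]
Scaling solutions exist if and only if $\Phi$ attains its minimum, and any minimiser yields a nonnegative fractional decomposition automatically. I will therefore show that $\Phi$ is coercive, with its minimiser satisfying $\|y\|_\infty=O(\eta)$. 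The starting point has near-zero gradient, since $\nabla\Phi(0)_e=s(e)-1$. By a Newton or implicit-function type argument it suffices that the Hessian $H=M\,\diag(w)\,M^{\T}$ is well-conditioned in $\ell_\infty$ on the relevant region, where $M$ is the edge--clique incidence matrix. Concretely, I will show that $H=D+N$, where $D$ is diagonal with entries close to $1$, and that the off-diagonal part, after normalisation, acts as a contraction in a weighted norm. This gives an iteration $y\mapsto y-D^{-1}\nabla\Phi(y)$ (Sinkhorn-style) which converges.

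The main obstacle is the off-diagonal part $N$: its row sums are $\binom rk-1$ times the diagonal, so it is not small in $\ell_\infty$, and a naive contraction fails. My first approach is to exploit the exact structure of $\psi$. In the host $G$ the analogous operator is a fixed averaging operator whose action on deviations I can analyse deterministically. The deterministic fractional decomposition, together with the codegree surplus $\varepsilon$, gives a spectral gap or absorption property: any perturbation $g$ of the edge sums with $\|g\|_\infty\le\eta$ can be cancelled by a signed clique weighting of size $O(\eta\,n^{-(r-k)})$, which I will build with local gadgets whose existence follows from the codegree surplus. I then need to transfer this correction operator to $G_p$, showing with the same concentration tools plus a union bound that the random restriction preserves it up to relative error $o(1)$. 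The key estimate is uniform control of two-edge codegree statistics in $G_p$, namely the number of surviving cliques containing two given edges. This is the step most likely to need the $\log n$ factor at full strength, and possibly a second-moment bootstrapping over $O(\log n)$ rounds of the iteration.
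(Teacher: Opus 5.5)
\textbf{There is a genuine gap: you never obtain an $\ell_\infty$ bound on the minimiser of $\Phi$ in $G_p$, and that bound carries the whole proof.} Your first step and the potential $\Phi$ do match the paper's setup (Theorem~\ref{thm:balanced-seed} and the objective of Section~\ref{sec:potential-setup}); the theorem is Theorem~\ref{thm:quantitative} applied to that seed. But each mechanism you propose for the $\ell_\infty$ bound fails near the threshold.

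\emph{The iteration diverges.} Take $y\mapsto y-D^{-1}\nabla\Phi(y)$. Its linearisation is $-D^{-1}N$, and $D^{-1}N\ones=(\binom{r}{k}-1)\ones$ holds exactly at every potential, because each clique through $e$ puts $\binom{r}{k}-1$ times its diagonal weight off the diagonal. So the linearisation has an eigenvalue of modulus at least $2$, and the iteration diverges in every norm. Damping gives $\ell_2$ convergence. However, the Hessian bounds hold only in a bounded $\ell_\infty$ box, while $\|\nabla\Phi(0)\|_2$ is polynomially large, so $\ell_2$ control does not keep the iterates in that box.

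\emph{The transfer step cannot work as stated.} You want to transfer a host correction operator ``up to relative error $o(1)$'' via two-edge codegrees. For $|e\cap j|=k-1$, the expected number of cliques of $G_p$ through $e$ and $j$ is of order $n^{r-k-1}p^{\binom{r}{k}-2}=\mu/(np)$. This tends to $0$ when $\mu=n^{r-k}p^{\binom{r}{k}-1}\asymp\log n$, so these counts are $0$ or $O(1)$ and do not concentrate. Percolation preserves only loads, square sums such as $\sum_j b(e,j)^2=O(\gamma_{k,r})$, and operator-norm comparisons (Lemma~\ref{lem:restriction}, Proposition~\ref{prop:weighted-concentration}). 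None of these gives deterministic $\ell_\infty$ control.

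\emph{The transfer would make scaling superfluous.} A bounded signed right inverse of the incidence map in $G_p$ already finishes the job: $w_0$ plus a correction with load vector $\ones-s$ is a fractional decomposition. So that step is essentially the theorem restated.

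\emph{Heuristically, the deterministic route is blocked.} In the locally tree-like clique structure, $H^{-1}\mathbf e_a$ alternates in sign, with entries of size about $\mu^{-d}$ at depth $d$. There are about $(\mu(\binom{r}{k}-1))^d$ edges at that depth, so its $\ell_1$-norm grows like $(\binom{r}{k}-1)^d$ up to the tree-like radius. That radius is unbounded when $\mu=n^{o(1)}$. This is consistent with the $n^{\alpha}$ or polynomial losses of gadget and redistribution methods.

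\textbf{The missing idea is to exploit the randomness of the gradient instead of inverting the random Hessian in $\ell_\infty$.} The paper uses only that inverse Hessian columns are $\ell_2$-close to coordinate vectors, with $\|(\mathsf H-\mathsf D)\mathbf e_a\|_2^2=O(\gamma_{k,r})$ from Lemma~\ref{lem:root-square}. It then treats the exact potential as a random function of the edge indicators, in five steps:
(i) one flip moves the extended potential, including virtual coordinates at absent edges, by $O(\sqrt{\gamma_{k,r}})$ (Proposition~\ref{prop:one-edge});
(ii) the sum over all flips of the squared changes of a fixed coordinate is $O(\gamma_{k,r})$ (Lemma~\ref{lem:response-energy}, Proposition~\ref{prop:square});
(iii) the entropy method with rare exceptions (Lemma~\ref{lem:rare-exception}) turns this into concentration at scale $\sqrt{\gamma_{k,r}\log n}$;
(iv) the mean is located by resampling the side edges of a clique and inverting the \emph{population} Hessian, whose $\ell_\infty$ inverse is bounded by the subset-star induction (Corollary~\ref{cor:population-inverse});
(v) existence follows by continuing the expectation of a capped statistic from $q=1$ down to $q=p$.

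This argument needs a seed with $A_\phi\succeq\kappa I$. The paper certifies this using decompositions of $J-e$ on bounded-order samples (Lemma~\ref{lem:deletion-seed}). Bounded order also supplies the upper weight bound, which averaging decompositions of $n$-vertex sub-hosts would not.

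Finally, your load concentration also needs more care. Janson's inequality controls only lower tails, and Kim--Vu loses powers of $\log n$ that are fatal at $\mu\asymp\log n$. The sharper arguments of Section~\ref{sec:random} are needed there.
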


Taking $k=2$ and $G=K_n$ proves
Conjecture~\ref{conj:clique-threshold}.

\Needspace{7\baselineskip}
\medskip
\noindent\textbf{Independent work.}
During the preparation of this manuscript, we became aware of
concurrent work by Joos and Smith~\cite{JS}, who proved that
$G^{(k)}(n,p)$ has a fractional $K_r^{(k)}$-decomposition with high
probability whenever
$p\ge n^{-(r-k)/(\binom{r}{k}-1)+\alpha}$ for any fixed $\alpha>0$.
Their proof modifies and generalizes an iterative
weight-redistribution algorithm proposed by Mahabaduge and
Simkin~\cite{MS}. Our matrix-scaling approach reaches
the logarithmic probability scale and applies to random
subhypergraphs of dense hosts.

\subsection{Matrix scaling}
\label{sec:method}
Our approach corrects random load discrepancies by an exponential change
of weights. We first develop the probabilistic argument for independent columns; this model
explains the size of the correction and provides the proof template for clique decompositions.
The deterministic starting point is the spectral analysis of matrix scaling by Kwok, Lau and Ramachandran~\cite{KLR}.

To see the balancing problem in its simplest form, let $H$ be a bipartite graph with
vertex classes $\{a_1,\ldots,a_N\}$ and $\{b_1,\ldots,b_N\}$,
where every $b_j$ has positive degree. Associate to $b_j$ a
probability vector $v_j\in\R^N$ supported on its neighbors; for
example, give each neighbor weight $1/d_H(b_j)$. Interpreting
$(v_j)_i$ as the weight of $a_ib_j$, the load at every $b_j$ is
already one. These weights form a fractional perfect matching
exactly when $\sum_jv_j=\ones$, where $\ones$ denotes the all-ones
vector. Thus the task is to balance the remaining vertex loads
without changing the supports.

Write $V=(v_1,\ldots,v_N)$. Matrix scaling applies a common
coordinatewise correction and then normalizes each column:
$p_{v_j}(x)_i=(v_j)_i e^{x_i}/(v_j^{\T}e^x)$, where $e^x$ is the
coordinatewise exponential and $\T$ denotes transpose. Every
$p_{v_j}(x)$ is a probability vector with the same support as $v_j$.
The scaled matrix is therefore doubly stochastic (all row and column
sums are one) precisely when $\sum_jp_{v_j}(x)=\ones$. Adding a constant to all coordinates of
$x$ changes none of these columns, so we require $\ones^{\T}x=0$
and call $x$ \emph{centered}. If $\|x\|_\infty\le t$, every
supported weight changes by a factor between $e^{-2t}$ and $e^{2t}$.

Kwok, Lau and
Ramachandran~\cite{KLR} proved the following deterministic statement.

\begin{deterministic}[Kwok--Lau--Ramachandran]
There are constants $c,C>0$ such that the following holds.
Let $V$ be a nonnegative column-stochastic $N\times N$ matrix, where $N\ge 2$, and put $\varepsilon=\|V\ones-\ones\|_\infty$.
Suppose $V$ has second-largest singular value $\sigma_2(V)\le1-\kappa$ for some $0<\kappa\le1$.
If $\varepsilon\log(2N)\le c\kappa^2$, then there is a centered
vector $x\in\R^N$ with
$\|x\|_\infty\le C\varepsilon\log(2N)/\kappa$
such that the matrix with columns $p_{v_j}(x)$ is doubly stochastic.
\end{deterministic}

Suppose now that the columns are independent and balanced in
expectation. Can the realized matrix be balanced by a potential of
the same order as its natural row fluctuations? Our model theorem
shows that a fixed spectral gap and sufficiently small entries suffice.

\Needspace{9\baselineskip}
\begin{theorem}[The model case]\label{thm:T11}
Let $N\ge2$, and let $v_1,\ldots,v_N$ be independent probability
vectors in $\R^N$ with entries at most $\beta$. Suppose that
$\sum_j\E v_j=\ones$ and
\[
 \Big\|\sum_j\E[v_jv_j^{\T}]
             -\frac1N\ones\ones^{\T}\Big\|\le1-\kappa
\]
for some $\kappa>0$. There are constants $c,C>0$, depending only on
$\kappa$, such that the following holds. If $\beta\log(2N)\le c$ and
$\log(2N)\le s\le c/\beta$, then with probability at least $1-e^{-s}$
there is a vector $x\in\R^N$ with $\ones^{\T}x=0$ and
$\|x\|_\infty\le C\sqrt{\beta s}$ such that the $N\times N$ matrix
with entries $B_{ij}=(v_j)_i e^{x_i}/(v_j^{\T}e^x)$ is doubly stochastic.
\end{theorem}

The theorem gives
$\|x\|_\infty=O_\kappa(\sqrt{\beta\log N})$ at polynomially small
failure probability. Scalar Bernstein~\cite{BLM} and a union bound give the
same upper bound for $\|V\ones-\ones\|_\infty$. Even when a realization
satisfies the deterministic spectral and small-imbalance conditions,
substituting this discrepancy estimate into the deterministic bound
above gives an additional factor of $\log N$. We avoid that loss by
estimating the potential itself: coordinate replacement square sums
give concentration, and a resampling comparison controls its mean.
This square-root dependence on the entry scale is the quantitative
feature we seek to preserve for clique decompositions.
We prove Theorem~\ref{thm:T11} in Section~\ref{sec:scaling}.

\subsection{Quantitative inheritance}
\label{sec:seed-intro}
We now transfer this balancing principle to clique decompositions. A fractional clique decomposition is a fractional perfect matching in the auxiliary hypergraph whose vertices are host edges and whose hyperedges are the edge sets of cliques. The corresponding multiplicative scaling was used by Barvinok and Samorodnitsky~\cite{BS} to estimate perfect-matching partition
functions. Here we seek to construct a scaling with uniformly small potentials on a sparse random support.

Start with a fractional $K_r^{(k)}$-decomposition $\phi$ of the host $G$, which we call the \emph{seed}. The model theorem suggests
two useful properties of this seed: small individual weights and a
spectral gap. Let $\cK_r(G)$ be the family of vertex sets spanning
copies of $K_r^{(k)}$ in $G$, and write $E(Q)=\binom{Q}{k}$.
Define the \emph{seed Gram matrix} $A_\phi$, indexed by $E(G)$,
by letting $(A_\phi)_{ef}$ be the sum of $\phi(Q)$ over all cliques
containing both $e$ and $f$. Thus its entries record the seed weight shared by two edges. The condition $A_\phi\succeq\kappa I$ imposed below is the clique
analogue of the spectral assumption in Theorem~\ref{thm:T11}.

After retaining edges with probability $p>0$, the natural starting
weight on a retained clique is $\phi(Q)p^{1-\binom{r}{k}}$:
conditional on one edge being retained, its other $\binom{r}{k}-1$
edges all survive with probability $p^{\binom{r}{k}-1}$.
If $\phi(Q)\le R/n^{r-k}$, these starting weights are at most
$R/(n^{r-k}p^{\binom{r}{k}-1})$. The model's bound
$O(\sqrt{\beta\log N})$ therefore suggests a potential correction of order
$\sqrt{\log n/(n^{r-k}p^{\binom{r}{k}-1})}$, and hence the probability scale in Theorem~\ref{thm:minimum-degree}.
Clique contributions are, however, not independent. The theorem below accounts for this
dependence through an additional overlap term.

\Needspace{6\baselineskip}
\begin{theorem}
\label{thm:quantitative}\label{thm:inheritance}
For all integers $k\ge2$, $r\ge k+1$ and every $R,\kappa,b>0$, there are constants
$C,C',n_0>0$ such that the following holds for $n\ge n_0$.
Let $G$ be an $n$-vertex $k$-uniform hypergraph with a fractional $K_r^{(k)}$-decomposition
$\phi$ satisfying
\begin{itemize}[leftmargin=2em, itemsep=4pt, topsep=6pt]
\item \textbf{Low weight:} $\phi(Q)\le \frac{R}{n^{r-k}}$ for every $Q\in\cK_r(G)$.
\item \textbf{Spectral gap:} every eigenvalue of $A_\phi$ is at least $\kappa$.
\end{itemize}
If
$p \ge C(\log n/n^{r-k})^{1/(\binom{r}{k}-1)}$, then with probability at least $1-n^{-b}$, there is a vector
$u\in\R^{E(G_p)}$ satisfying
$\|u\|_\infty\le C'\max\Bigl\{
       \sqrt{\frac{\log n}{np^k}},\,
       \sqrt{\frac{\log n}{n^{r-k}p^{\binom{r}{k}-1}}}
       \Bigr\}$,
such that $G_p$ admits a fractional $K_r^{(k)}$-decomposition with weights
$w(Q)=\phi(Q)p^{1-\binom{r}{k}}\exp\Bigl(\sum_{e\in E(Q)}u_e\Bigr)$.
\end{theorem}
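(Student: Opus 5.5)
We solve for the potential $u$ by a fixed-point (Newton-type) argument. The load map
\[
F(u)_e=\sum_{Q\ni e,\,Q\in\cK_r(G_p)}\phi(Q)p^{1-\binom rk}\exp\Bigl(\sum_{f\in E(Q)}u_f\Bigr),\qquad e\in E(G_p),
\]
must satisfy $F(u)=\ones$. Write $F(u)=\ones+\Delta+L u+N(u)$, with the three terms defined as follows.

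\begin{itemize}
\item $\Delta=F(0)-\ones$ is the random load discrepancy.
\item $L=DF(0)$ is the random Gram matrix, with $L_{ef}=p^{1-\binom rk}\sum_{Q\ni e,f,\,Q\subseteq G_p}\phi(Q)$.
\item $N(u)$ is the remainder, with $\|N(u)\|_\infty\lesssim\|F(0)\|_\infty\cdot O(\|u\|_\infty^2)$, since each clique carries $\binom rk$ exponentiated coordinates.
\end{itemize}

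Note that $\E[L_{ef}\mid e,f\in G_p]=p^{1-\binom rk}p^{\binom rk-|e\cup f|_{\text{edges}}}(A_\phi)_{ef}$. Normalized appropriately, $L$ is a random perturbation of $A_\phi$ restricted to $E(G_p)$, rescaled by $p^{-1}$ on off-diagonal shared-edge structure. The plan is to show that with probability $1-n^{-b}$, $L$ is invertible with $\|L^{-1}\|_{\infty\to\infty}=O(1)$, and $\|\Delta\|_\infty$ is small. Banach's fixed point theorem on $u\mapsto -L^{-1}(\Delta+N(u))$ in an $\ell_\infty$ ball of radius $2\|L^{-1}\|\|\Delta\|_\infty$ then gives the solution, provided $\|\Delta\|_\infty\log$-factors are below a small constant. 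That condition is exactly where $C$ large in $p$ is used.

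\textbf{Step 1: discrepancy.} For fixed $e\in G$, conditional on $e\in G_p$, $F(0)_e$ is a polynomial in the other edge indicators with mean $\sum_{Q\ni e}\phi(Q)=1$. Apply Kim--Vu / Janson-type concentration, or a Bernstein bound after exposing edges by the $(k-1)$-set structure. This gives $\|\Delta\|_\infty\lesssim\sqrt{\log n/(n^{r-k}p^{\binom rk-1})}+\sqrt{\log n/(np^k)}$. The first term is the variance from individual cliques, of weight at most $R/(n^{r-k}p^{\binom rk-1})$. The second term comes from edges $f$ sharing $k-1$ vertices with $e$: each such $f$ influences a total weight of order $1/(np)$, and there are about $np$ of them. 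Union bound over $O(n^k)$ edges.

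\textbf{Step 2: invertibility of $L$ in $\ell_\infty$.} This is the main obstacle. The spectral gap $A_\phi\succeq\kappa I$ yields an $\ell_2$ bound, but we need $\ell_\infty$ control of size $O(1)$, not $O(\sqrt N)$. I would write $L=D+M$, where $D$ is the diagonal (concentrated near $I$) and $M$ has row sums $\approx1-L_{ee}$ and entries $O(1/(np^{k}\cdot\text{stuff}))$, so that $L$ is diagonally dominated-like but not strictly so. Following the model case, I would solve $Lx=y$ by projecting onto the $\ell_2$ solution with $\|x\|_2\le\kappa^{-1}\|y\|_2$, and then bootstrap to $\ell_\infty$. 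The bootstrap uses $x=y-Mx$ together with $\|M\|_{2\to\infty}$ small, because $M$ has small entries, giving $\|x\|_\infty\le\|y\|_\infty+\max_e\|M_{e\cdot}\|_2\|x\|_2$.

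This bootstrap loses unless $\|y\|_2$ is comparable to $\|y\|_\infty$ times a small factor. So instead of Banach in pure $\ell_\infty$, I would use a mixed norm $\|u\|_\infty+\lambda\|u\|_2/\sqrt{|E(G_p)|}$, or, as in Theorem~\ref{thm:T11}, estimate the potential directly. That route concentrates each coordinate $u_e$ via bounded-differences/replacement square sums and controls $\E u_e$ by a resampling comparison. The random spectral gap of $L$ itself, namely $L\succeq(\kappa/2)I$ on $E(G_p)$, I would get from matrix Bernstein applied to the sum over cliques of rank-one terms $w_Q\ones_{E(Q)}\ones_{E(Q)}^{\T}$, whose norm is bounded by the maximum weight; under the $p$ hypothesis this is $O(1/\log n)$.

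\textbf{Step 3: closing.} Combine Steps 1 and 2. The contraction holds because $\|N(u)\|_\infty=O(\|u\|_\infty^2)$ and $\|u\|_\infty\ll1$. The resulting $u$ satisfies the stated bound, and all weights $w(Q)$ are positive, which gives the fractional decomposition.
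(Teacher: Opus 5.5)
\textbf{Step 2 is the gap.} Granting Step 2 and a sharp form of Step 1 (which already needs more than Kim--Vu, since that loses logarithmic factors), your contraction would close. But Step 2 is exactly what is missing: you need $\|L^{-1}\|_{\infty\to\infty}=O(1)$ for the \emph{random} Hessian on $E(G_p)$, and nothing you propose gives it.

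\begin{itemize}
\item Your description of $L=D+M$ is off. Since $L\ones=\binom{r}{k}F(0)$, the off-diagonal row sums are close to $\binom{r}{k}-1$, not to $1-L_{ee}$. So $L$ is far from diagonally dominant, and no Neumann-type argument in $\ell_\infty$ is available.
\item Your $\ell_2\to\ell_\infty$ bootstrap loses a factor of order $\sqrt{|E(G_p)|\gamma}$, as you noticed.
\item The paper's dimension-free maximum-norm inverse (Lemma~\ref{lem:star-inverse}) is proved only for the deterministic population Hessian, using $|A_{ef}|\le R_0n^{-(k-|e\cap f|)}$. The same subset-star induction run on the random Hessian loses powers of $p^{-1}$. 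For triangles, the block from the star at a vertex to the remaining edges has norm of order $(np^2)^{-1/2}$, while the global bound is only $\|x\|_2\lesssim\sqrt{n^2p}\,\|y\|_\infty$. The first step therefore gives $\sqrt{n/p}$ rather than the needed $\sqrt{np}$.
\item The paper never inverts the random Hessian in $\ell_\infty$. It uses it only through its $\ell_2$ gap and the fact that its inverse columns lie within $O(\sqrt\gamma)$ in $\ell_2$ of multiples of coordinate vectors (Lemma~\ref{lem:response-energy}). That controls responses to localized forcings $f_j$ with $\|f_j\|_2^2=O(\gamma)$, but gives nothing for the global forcing $\Delta$.
\item Your spectral-gap argument also fails as written. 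The rank-one clique terms share edge indicators, so matrix Bernstein does not apply. The right comparison matrix is $I+p^{-1}\Pi\Gamma_\phi\Pi$, with the random projection inside, not an expectation. The paper needs Lemma~\ref{lem:restriction} together with a colored exposure and conditional matrix Bernstein.
\end{itemize}

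\textbf{The fallback has no existence mechanism.} Estimating the potential directly, as in Theorem~\ref{thm:T11}, is the paper's route. But replacement square sums and the resampling mean estimate presuppose a solution that exists, with uniform bounds, outside a polynomially small event. The only existence argument you offer is the Banach step that fails. The paper gets existence by continuation in the density:
\begin{itemize}
\item It couples $G(q)$ for $q\in[p,1]$ and defines the capped statistic $Z(q)=\min\{\tau_n,\|\widehat u_{G(q),q}\|_\infty\}$, set to $\tau_n$ when there is no solution.
\item $\E Z(q)$ is continuous, and $\E Z(1)=0$ by seed balance and the seed gap.
\item It proves that $\E Z(q)\le\tau_n/3$ forces both $\E Z(q)\le\tau_n/8$ and $\PP(Z(q)\ge\tau_n/2)\le n^{-b}$ (Proposition~\ref{prop:bootstrap-inputs}), so the expectation can never reach $\tau_n/3$.
\end{itemize}
Making this work needs further ingredients absent from your sketch:
\begin{itemize}
\item virtual coordinates, so that flipping an edge does not change the coordinate space;
\item the bound $\max_a\sum_j\omega_j(\delta_a^{(j)})^2=O(\gamma)$ on squared one-flip responses, which cannot come from a maximal change times a total change (Remark~\ref{rem:max-obstruction}) and is obtained from the squared-overlap estimate of Lemma~\ref{lem:root-square} via Lemma~\ref{lem:response-energy};
\item the population inverse bound of Corollary~\ref{cor:population-inverse}, applied to the deterministic mean $\E Y$, which is the only place a maximum-norm inverse is actually available.
\end{itemize}
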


The second term in the maximum is the rate suggested by the model;
the first comes from overlaps on $k+1$ vertices.
Increasing the density constant $C$ makes both terms small, so the
resulting weights can be made arbitrarily close to the naturally
rescaled seed. More precisely, for any fixed $0<\eta<1$, choosing
$C$ sufficiently large ensures that every $w(Q)$ lies between
$(1-\eta)\phi(Q)p^{1-\binom{r}{k}}$ and
$(1+\eta)\phi(Q)p^{1-\binom{r}{k}}$.

To complete the passage from the dense host to its random
subhypergraph, we need a seed with these two properties.
The minimum-codegree assumption supplies both.

\Needspace{8\baselineskip}
\begin{theorem}
\label{thm:balanced-seed}
For all integers $k\ge2$, $r\ge k+1$ and every
$\varepsilon>0$, there are constants $\sigma,R,\kappa>0$ and
$n_0$ such that the following holds. Every $k$-uniform hypergraph
$G$ with $n\ge n_0$ vertices and
minimum codegree $\delta_{k-1}(G)\ge(\delta^*_{k,r}+\varepsilon)n$
has a fractional $K_r^{(k)}$-decomposition $\phi$ satisfying
$\frac{\sigma}{n^{r-k}}\le\phi(Q)\le \frac{R}{n^{r-k}}$ for every
$Q\in\cK_r(G)$, such that every eigenvalue of $A_\phi$
is at least $\kappa$.
\end{theorem}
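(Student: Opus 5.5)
We will build $\phi$ as a convex combination of two fractional decompositions: a \emph{uniformizing} part and a \emph{spectral} part. Fix $d=\delta^*_{k,r}+\varepsilon/2$.

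The basic idea is to average over random subsets. Choose a random vertex subset (or random edge-subhypergraph) $G'$ so that, with high probability, $G'$ still has minimum codegree at least $(\delta^*_{k,r}+\varepsilon/4)|V(G')|$ and hence admits a fractional decomposition. The simplest concrete version is this: for each $m$-set $S\subseteq V(G)$ with $m$ a large constant, the induced subhypergraph $G[S]$ has $\delta_{k-1}(G[S])\ge(\delta^*_{k,r}+\varepsilon/2)m$ for all but a tiny fraction of $S$, by hypergeometric concentration of codegrees. By the definition of $\delta^*_{k,r}$, such an $S$ admits a fractional decomposition $\psi_S$ of $G[S]$, provided $m\ge n_0(\varepsilon)$.

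\textbf{Step 1 (the main obstacle): averaging fails on bad sets.} Averaging $\psi_S$ over all good $S$ gives weights on edges of $G$ that are not exactly one. An edge $e$ lies in a bad $S$ with small probability, but the deficit is nonuniform. I would handle this with an absorbing-style correction. First reserve a small multiple $\gamma$ of a ``spread'' weighting, using the $\delta^*$-definition at a slightly lower codegree on the whole $G$. Then show that the averaged weighting $\bar\psi$ covers each edge with total $1-\gamma\pm\gamma/10$. Finally correct the remaining perturbation by a linear-algebra argument: the perturbation is small, and the operator taking clique weights to edge loads has a bounded right-inverse on $G$ using bounded-size gadgets (edge-switchers built from cliques sharing $k$-sets, which exist in abundance by codegree $>1/2$ inherited from $\delta^*_{k,r}\ge$ something). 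I am least sure here. The alternative is to choose the codegree condition so that every $m$-set is good deterministically, which fails, so the correction is genuinely needed.

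\textbf{Step 2: lower and upper bounds on $\phi(Q)$.} Each clique $Q\in\cK_r(G)$ lies in about $\binom{n-r}{m-r}$ sets $S$, a $\Theta(n^{r-m})/\binom{n}{m}$ fraction. Inside $G[S]$ we need $\psi_S(Q)$ bounded below for every clique. To arrange this, replace $\psi_S$ by $(1-\theta)\psi_S+\theta\cdot(\text{uniform over cliques of }G[S])$ and fix the resulting edge-load error inside $S$ by the definition of $\delta^*$ applied with margin. Equivalently, decompose $G[S]$ minus a uniform clique weighting, using that subtracting a small uniform weight keeps a ``weighted codegree'' condition. Averaging then gives $\phi(Q)=\Theta(n^{-(r-k)})$, since the number of cliques through an edge is $\Theta(n^{r-k})$ and each edge has load one. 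Boundedness of all weights within $S$ is automatic for fixed $m$.

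\textbf{Step 3: spectral gap.} We have $A_\phi=\sum_Q\phi(Q)\,\one_{E(Q)}\one_{E(Q)}^{\T}$. Write $\phi=\phi_0+\sigma' n^{-(r-k)}\one_{\cK_r}$, where $\phi_0\ge0$ (possible by the lower bound). Then $A_\phi\succeq \sigma' n^{-(r-k)}\sum_Q \one_{E(Q)}\one_{E(Q)}^{\T}$, so it suffices to bound from below the smallest eigenvalue of the clique-incidence Gram matrix $MM^{\T}$, where $M$ is the $E(G)\times\cK_r(G)$ incidence matrix, by $\Omega(n^{r-k})$. For this I would localize: $MM^{\T}=\sum_{S}$ of the corresponding matrices for $r$-sets, and by averaging over $(k+1)$-sets or $r$-sets it reduces to showing that each edge $e$ has $\Omega(n^{r-k})$ cliques in which the local Gram matrix $\one\one^{\T}$ restricted to... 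This is where care is needed, because a single $\one_{E(Q)}\one_{E(Q)}^{\T}$ is rank one. Instead I would use $\binom{r}{k}$-wise combinations: for a fixed $(r+1)$-set $T$ with $G[T]$ complete, the vectors $\one_{E(Q)}$ for $Q\in\binom{T}{r}$ together with those in larger cliques span all of $\R^{E(T)}$ with bounded condition number, because the inclusion matrix of $r$-sets versus $k$-sets in a clique on $2r$ vertices has full row rank (Gottlieb--Kantor / Wilson). Hence for every clique $K$ on $2r$ vertices, $\sum_{Q\subseteq K}\one_{E(Q)}\one_{E(Q)}^{\T}\succeq c_r I_{E(K)}$. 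Summing over all $K_{2r}^{(k)}$ in $G$ (supersaturation above $\delta^*_{k,r}$, which itself gives positive density of $K_{2r}$ through every edge, a step I would need to verify or replace with $K_{r+1}$ and a direct rank check) yields $\lambda_{\min}(MM^{\T})\ge c\,n^{r-k}$. Here each $Q$ is counted $O(n^{r})$ times per $K$, so the normalizations work out, giving $\kappa>0$.
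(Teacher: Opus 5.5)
There is a genuine gap, and it is the same missing idea in all three steps. Your sampling of bounded-order induced subhypergraphs $G[S]$ and your per-clique upper bound match the paper. What you never use is that a good local host $J=G[S]$ stays above the threshold after deleting a single edge. Deleting a hyperedge lowers each codegree by at most one, so the $\varepsilon$-margin (take $\varepsilon\ell/4\ge1$) gives fractional decompositions $\beta_e$ of every $J-e$, that is, $B_J\beta_e=\one-\mathbf e_e$ with $0\le\beta_e\le1$. These are exactly the bounded-size gadgets you are looking for, and they do everything your Steps~1--3 attempt by other means.

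\emph{Step~2.} Averaging $\binom{r}{k}^{-1}(\mathbf e_Q+\sum_{e\in E(Q)}\beta_e)$ over $Q\in\cK_r(J)$ gives a decomposition $\alpha$ of $J$ with $\alpha(Q)\ge[\binom rk\binom\ell r]^{-1}$ on every clique. Your version does not work, because the definition of $\delta^*_{k,r}$ concerns unweighted hypergraphs. It therefore cannot decompose $\one-\theta B_J(\text{uniform})$, whose loads are nonuniform.

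\emph{Step~1.} The convex combination $(1-\Lambda)\alpha+\sum_e\lambda_e\beta_e$ realizes any target load vector $a=\one-\lambda$ with entries in $[1-\eta,1]$, where $\eta\le[2\binom{\ell}{k}]^{-1}$. It is nonnegative and lives entirely inside $J$. Let $M_e$ count the good sets containing $e$ and put $M=\binom{n-k}{\ell-k}$. Choosing $a_e=(1-\eta)M/M_e$ on every good set containing $e$ then makes the normalized average exactly balanced. Your substitute fails on three counts. A decomposition of all of $G$ taken from the definition of $\delta^*_{k,r}$ has no weight control, so it is not ``spread''; producing a spread one is the content of the theorem. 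The edge-switchers rest on a numerical lower bound for $\delta^*_{k,r}$ that is not available. And nothing ensures that the negative parts of a global correction are absorbed by the reserve.

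\emph{Step~3.} As proposed, this step fails. Supersaturation of $K^{(k)}_{2r}$ through every edge is false in general. For $(k,r)=(2,3)$ one has $\delta^*_{2,3}=3/4$, and the balanced complete $5$-partite graph has minimum degree $4n/5\ge(3/4+\varepsilon)n$ for $\varepsilon\le1/20$, yet contains no $K_6$. The fallback $K^{(k)}_{r+1}$ cannot certify its own edge space either. The inclusion matrix of $k$-sets versus $r$-sets on $r+1$ points has $\binom{r+1}{k}>r+1$ rows but only $r+1$ columns. Gottlieb--Kantor needs $k+r$ points, and forcing $K^{(k)}_{k+r}$ through every edge would again need information about $\delta^*_{k,r}$ that the statement does not give.

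The paper's certificate needs no extension hypothesis. We have $B_J(\alpha-\beta_e)=\mathbf e_e$ and $\|\alpha-\beta_e\|_2^2\le\binom\ell r$. For every edge vector $x$, Cauchy--Schwarz then gives $x_e^2\le\binom\ell r\|B_J^{\T}x\|_2^2$, hence $B_JB_J^{\T}\succeq[\binom\ell k\binom\ell r]^{-1}I$. Combined with the lower weight bound, this yields $A_{\psi_S}\succeq\kappa_\ell I$ on $E(G[S])$. Summing over good sets, $A_\phi\succeq\frac{\kappa_\ell}{(1-\eta)M}\sum_S\Pi_S=\kappa_\ell\,\diag\bigl(M_e/((1-\eta)M)\bigr)\succeq\kappa_\ell I$. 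Your reduction to a lower bound of order $n^{r-k}$ on the smallest eigenvalue of the edge--clique Gram matrix of $G$ is valid. However, it is proved by this local certificate, not by clique supersaturation.
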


Theorem~\ref{thm:minimum-degree} follows by applying
Theorem~\ref{thm:quantitative} to the seed supplied by Theorem~\ref{thm:balanced-seed}.
Our short proof of Theorem~\ref{thm:balanced-seed} in
Section~\ref{sec:seed-reduction} adapts the sampling and target-weight
adjustment underlying the beautiful theorem of Delcourt, Henderson,
Lesgourgues and Postle~\cite{DHLP} on balanced fractional
decompositions of graphs. Decompositions with one edge deleted also
supply the spectral gap. In a related setting, Kwan, Safavi and Wang~\cite{KSW} use well-distributed fractional perfect matchings to guide a random greedy process for counting perfect matchings in Dirac hypergraphs.

\medskip
\noindent\textbf{Proof outline and organization.}
Section~\ref{sec:scaling} develops the probabilistic scaling argument
for independent columns. Section~\ref{sec:proof-architecture} sets up
the clique balancing equations and reduces
Theorem~\ref{thm:quantitative} to a concentration and mean estimate.
Sections~\ref{sec:structure}--\ref{sec:random} supply the spectral
seeds, concentration tools, and random overlap estimates.
Section~\ref{sec:local} proves local stability and applies the
same deterministic response estimate, Lemma~\ref{lem:response-energy},
to clique balancing. Section~\ref{sec:bootstrap} verifies
the concentration and mean estimate, completing the proof.
Appendix~\ref{app:squared-overlap} supplies the squared-overlap count.

\subsection*{Notation}
All logarithms are natural, and $[n]=\{1,\ldots,n\}$.
Asymptotic notation refers to $n\to\infty$, or to $N\to\infty$
in the column model. For functions $f,g$ of this size parameter, with $g>0$, we write $f=O(g)$ if $|f|\le Cg$ for all sufficiently large sizes, and $f=o(g)$ if $f/g\to0$. For positive $f,g$, the notations $f\asymp g$ and $f=\Theta(g)$ mean that both $f=O(g)$
and $g=O(f)$ hold.
The positive constants $c,C$ may change from line to line;
their allowed dependence is specified locally, with subscripts
recording dependence on fixed parameters. The notation $a\ll b$
means that $a/b$ is chosen sufficiently small in terms of the fixed
parameters. An event holds \emph{with high probability} if its
probability tends to one.

For a vector $x$, let $\|x\|_1=\sum_i|x_i|$,
$\|x\|_2=(\sum_i|x_i|^2)^{1/2}$ and
$\|x\|_\infty=\max_i|x_i|$. We write
\[
 \|A\|=\sup_{\|x\|_2=1}\|Ax\|_2,
 \qquad
 \|A\|_{\infty\to\infty}
 =\sup_{\|x\|_\infty=1}\|Ax\|_\infty
 =\max_i\sum_j|A_{ij}|.
\]
For a random matrix $A$ and $1\le q<\infty$, we write
$\|A\|_{L^q(\mathrm{op})}=(\E\|A\|^q)^{1/q}$.
For symmetric matrices, $A\succeq B$ means that $A-B$ is positive
semidefinite, that is, $z^{\T}(A-B)z\ge0$ for every $z$; on a subspace,
this inequality is required only for vectors in that subspace.
We write $\ones^\perp=\{x:\ones^{\T}x=0\}$.
We use $\diag(x)$ for the diagonal matrix with diagonal $x$,
and $\diag(A)$ for the diagonal part of $A$.

For a twice differentiable function $\Phi:\R^d\to\R$, we write
$\nabla\Phi=(\partial_i\Phi)_{i=1}^d$ and
$\nabla^2\Phi=(\partial_i\partial_j\Phi)_{i,j=1}^d$ for its gradient
and Hessian. For a vector-valued map $f$, $Df$ denotes its Jacobian
matrix, with entries $(Df)_{ij}=\partial_j f_i$.

\section{Independent-column matrix scaling}\label{sec:scaling}
We prove Theorem~\ref{thm:T11} first. Independent columns allow us to
develop the concentration, mean estimate, and continuation argument
before introducing clique overlaps.

Write $p_v(x)_i=v_i e^{x_i}/(v^{\T}e^x)$, as in
Section~\ref{sec:method}; we call $p_v(x)$ the \emph{tilt} of $v$ by $x$.
A vector $x$ is centered if $\ones^{\T}x=0$; a centered Hessian or
gap refers to the restriction to $\ones^\perp$.
Put $\Pi_0=\ones\ones^{\T}/N$ and
$P_{\mathrm{pop}}=\sum_j\E[v_jv_j^{\T}]$, so $\|P_{\mathrm{pop}}-\Pi_0\|\le1-\kappa$.
The matrix $\Pi_0$ is the orthogonal projection onto constant vectors.
We use $P$ for the stochastic Gram matrices of the column model and
$\Pi$ for projections. The parameter $N$ is the matrix size,
$\beta$ is the entry bound, and $s$ is
the tail parameter in the theorem. Constants in local estimates may
depend on a fixed bound $M$ for $\|x\|_\infty$.

For a probability vector $v$, put
$\mathsf L_v=\diag(v)-vv^{\T}$. The identities
\begin{equation}\label{eq:column-conductance}
 Dp_v(x)=\mathsf L_{p_v(x)},
 \qquad
 z^{\T}\mathsf L_vz=\frac12\sum_{a,c}v_av_c(z_a-z_c)^2
\end{equation}
identify $\mathsf L_v$ as the Laplacian of the weighted complete
graph with edge weights, or \emph{conductances}, $v_av_c$.
Tilting by $x$ changes each conductance by a factor between
$e^{-4\|x\|_\infty}$ and $e^{4\|x\|_\infty}$.

We interpolate the column laws from a deterministic balanced matrix
to the given random matrix. Put $\bar v_j=\E v_j$. For
$0\le t\le1$, independently for each $j$, let $v_j(t)$ equal
$v_j$ with probability $t$ and $\bar v_j$ otherwise; the choices
are independent of the original columns. Define
\[
 \Phi_t(x)=\sum_j\log(v_j(t)^{\T}e^x)-\ones^{\T}x.
\]
A potential $x$ is \emph{stationary}, or \emph{exact}, if
$\nabla\Phi_t(x)=\sum_jp_{v_j(t)}(x)-\ones=0$, which is precisely
the row-balancing condition.
The columns remain independent, balanced in expectation, and bounded
entrywise by $\beta$. Writing $P_0=\sum_j\bar v_j\bar v_j^{\T}$,
their population Gram matrix is
\[
 P_{\mathrm{pop}}(t)=P_0+t(P_{\mathrm{pop}}-P_0)
                  \preceq P_{\mathrm{pop}},
\]
because $P_{\mathrm{pop}}-P_0$ is a sum of covariance matrices.
Thus $I-P_{\mathrm{pop}}(t)\succeq\kappa I$ on $\ones^\perp$
throughout the path.
At $t=0$ the columns are deterministically balanced, while at $t=1$
they have their original laws.

For an event $\mathcal A$, write $\one_{\mathcal A}$ for its indicator,
and let $e_i$ be a standard basis vector. A superscript $(j)$ denotes
recomputation after replacing input $j$ by an independent copy, and
$\E'_j$ integrates over that copy with the original inputs fixed.
For a symmetric matrix $\mathsf L$ with kernel $\operatorname{span}\{\ones\}$,
$\mathsf L^\dagger$ denotes its inverse on $\ones^\perp$, extended by zero on
the kernel.

\subsection{Proof strategy}
Call a column sample $(w_j)_j$ \emph{eligible} if its raw Hessian
$\sum_j\mathsf L_{w_j}$, evaluated at zero potential, is positive
definite on $\ones^\perp$.
By \eqref{eq:column-conductance}, every finite tilt of an eligible
sample has a positive centered Hessian. A centered stationary
point, when it exists, is therefore unique.

Fix the tail parameter $s$ in Theorem~\ref{thm:T11} and put
$\tau=A\sqrt{\beta s}$, where $A$ is a sufficiently large constant.
For $0\le t\le1$, set $Z_t=\min\{\tau,\|x_t\|_\infty\}$ when
the sample is eligible and has a finite centered stationary point
$x_t$, and set $Z_t=\tau$ otherwise. We will prove that
\[
 \E Z_t\le\tau/3\quad\Longrightarrow\quad\E Z_t\le\tau/8.
\]
The expected cap is continuous: it is a finite mixture over the set
of columns that have been randomized. Since $Z_0=0$, the strict
improvement keeps its expectation small all the way to $t=1$.
A tail bound for the cap then gives existence and the size of the
correction together.

The improvement combines two estimates. A replacement square sum
of order $\beta$ concentrates the potential around its mean.
Resampling a column controls the population equation at that mean,
and a population inverse bound makes the mean small. We first state
the local replacement lemma, deduce the theorem, and then prove the
lemma. The concentration tools are collected in
Section~\ref{sec:external}.

\label{sec:model-raw}
For each deterministic $t$, let $\mathcal R_t$ be the event that
$\sum_j\mathsf L_{v_j(t)}\succeq(\kappa/2)I$ on $\ones^\perp$.
Since $0\preceq\mathsf L_{v_j(t)}\preceq\beta I$ and
$\sum_j\E\mathsf L_{v_j(t)}=I-P_{\mathrm{pop}}(t)\succeq\kappa I$
on that space,
Theorem~\ref{thm:Tropp-Chernoff} gives
$\PP(\mathcal R_t^c)\le Ne^{-\kappa/(8\beta)}$, uniformly in $t$.
On $\mathcal R_t$, conductance comparison gives a fixed centered
gap in each fixed bounded box. Replacing one column decreases the
raw gap by at most $\beta$, so the same conclusion holds at every
one-column neighbor when $\beta$ is small enough.

\subsection{The local input}
\label{sec:model-local}
The next lemma constructs every one-column replacement from a bounded
exact solution. It bounds both the displacement energy
$\|x'-x\|_2^2$ of one replacement and the sum of squared responses
at a fixed coordinate. The latter estimate
will follow from the deterministic response lemma in
Section~\ref{sec:model-local-proof}, which also drives the clique proof.

\begin{lemma}[One-column stability and square sums]\label{lem:T12}
Let $V=(v_j)_j$ be column-stochastic, with entries at most $\beta$,
and suppose it has a centered exact potential $x$ with
$\|x\|_\infty\le M$. Put $P=\sum_jp_{v_j}(x)p_{v_j}(x)^{\T}$ and
suppose $I-P\succeq\kappa I$ on $\ones^\perp$.
There are constants $c,C>0$, depending only on $\kappa,M$, such that,
if $\beta\le c$, replacing any column by any probability vector with
entries at most $\beta$ gives a unique centered exact potential $x'$
satisfying
\begin{equation}\label{eq:T1}
 \|x'-x\|_\infty\le C\beta,
 \qquad \|x'-x\|_2^2\le C\beta.
\end{equation}
For independent column laws with entries at most $\beta$ and
$\sum_j\E v_j=\ones$, fresh replacements satisfy, at every original
sample obeying these hypotheses,
\begin{equation}\label{eq:T2}
 \sum_j\E'_j\big[(x_i-x_i^{(j)})^2\mid V\big]
       \le C\beta
 \qquad\text{for every coordinate }i.
\end{equation}
\end{lemma}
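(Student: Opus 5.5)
The plan is to treat $x$ as the unique centered minimizer of the strictly convex potential $\Phi(y)=\sum_l\log(v_l^{\T}e^y)-\ones^{\T}y$. We replace column $j$ by $v'$ along the linear path $v_j(\theta)=(1-\theta)v_j+\theta v'$, $\theta\in[0,1]$, and follow the stationary point $y(\theta)$ by the implicit function theorem. At a stationary point the Hessian is
\[
H(y)=\sum_l\mathsf L_{p_{v_l}(y)}=\diag\Big(\sum_lp_{v_l}(y)\Big)-P(y)=I-P(y),
\]
because the row sums equal one there. By \eqref{eq:column-conductance} and conductance comparison, $H(y)\succeq e^{-4\delta}(H(x)-O(\beta))\succeq(\kappa/2)I$ on $\ones^\perp$ whenever $\|y-x\|_\infty\le\delta\le1$ and $\beta\le c$; the $O(\beta)$ accounts for the column being moved. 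The path obeys $\dot y=-H(y)^{-1}g(\theta)$, where $g=\mathsf L\text{-tilt of }(v'-v_j)$, precisely $g=\partial_\theta p_{v_j(\theta)}(y)$. This $g$ is centered, satisfies $\|g\|_\infty\le C_M\beta$ and $\|g\|_1\le C_M$, and so $\|g\|_2^2\le\|g\|_\infty\|g\|_1\le C\beta$. We run a continuity argument: as long as $\|y-x\|_\infty\le1$, the gap persists, so the path extends and stays exact. Uniqueness then follows from strict convexity on $\ones^\perp$ (the sample is eligible).

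The two bounds in \eqref{eq:T1} come from a deterministic response estimate. The $\ell_2$ bound is immediate: $\|\dot y\|_2\le(2/\kappa)\|g\|_2\le C\sqrt\beta$. For the $\ell_\infty$ bound we write $H^{-1}g=g+PH^{-1}g$. Each entry $P_{ik}=\sum_lp_l(i)p_l(k)$ is at most $C\beta$, and each row of $P$ sums to one because the tilted matrix is doubly stochastic. Hence $\sum_kP_{ik}^2\le C\beta$, and therefore
\[
\|Pz\|_\infty\le\sqrt{C\beta}\,\|z\|_2 .
\]
This gives $\|H^{-1}g\|_\infty\le\|g\|_\infty+C\sqrt\beta\cdot C\sqrt\beta\le C\beta$. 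Integrating over $\theta$ yields $\|x'-x\|_\infty\le C\beta\le1$, which closes the continuity argument, and $\|x'-x\|_2^2\le C\beta$.

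For \eqref{eq:T2} we linearize at $x$ and write
\[
x^{(j)}_i-x_i=-e_i^{\T}H^{-1}g_j+r_{ij},\qquad g_j=p_{v_j'}(x)-p_{v_j}(x).
\]
Using the same splitting, $e_i^{\T}H^{-1}g_j=(g_j)_i+q_i^{\T}H^{-1}g_j$ with $q_i=Pe_i$, where $\|q_i\|_2\le C\sqrt\beta$. We bound the two pieces separately.
\begin{itemize}
\item The diagonal piece satisfies $\sum_j\E'_j(g_j)_i^2\le C\beta\sum_j\bigl(p_{v_j}(x)_i+\E'\,p_{v'_j}(x)_i\bigr)\le C\beta$. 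This uses the row-sum identity and $\sum_j\E v_j=\ones$, together with the fact that tilting distorts entries by at most $e^{2M}$.
\item For the second piece, put $w=H^{-1}q_i$, so $\|w\|_2\le C\sqrt\beta$ and $w$ is fixed given $V$. Then
\[
\sum_j\E'_j(w^{\T}g_j)^2\le2\,w^{\T}Pw+2\,w^{\T}\Big(\sum_j\E\,p_{v'_j}(x)p_{v'_j}(x)^{\T}\Big)w .
\]
Both matrices have operator norm at most $C_M$, since $\|P_{\mathrm{pop}}\|\le1$ and conductance comparison applies. This piece is therefore $\le C\beta$.
\end{itemize}

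The main obstacle is the remainder $r_{ij}$. Summing $N$ remainders naively could destroy the $O(\beta)$ bound, so we need $r_{ij}$ small both in size and in a localized sense. Write $r_{ij}=\int_0^1e_i^{\T}\bigl(H(x)^{-1}g_j-H(y(\theta))^{-1}g(\theta)\bigr)\,d\theta$. Its size is controlled by $\|y-x\|_\infty\le C\beta$ and by the change of the Hessian, $H(y)-H(x)$, which in quadratic form is bounded by $C\beta$ times $H$, plus a rank-two term from column $j$ of size $O(\|g_j\|)$. We first aim for $|r_{ij}|\le C\beta\,(|e_i^{\T}H^{-1}g_j|+\beta+|(g_j)_i|+|q_i^{\T}\cdot|)$, i.e.\ a remainder dominated by $C\beta$ times the same linear quantities already controlled, plus an $O(\beta)$ term supported by the column-$j$ mass at $i$. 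If this localized form proves awkward, the fallback is to apply the response splitting $H(y)^{-1}=I+P(y)H(y)^{-1}$ directly along the path. Doing so turns each $\theta$-slice into a diagonal piece and a $q_i(\theta)$ piece, and both can be square-summed exactly as above uniformly in $\theta$; Cauchy--Schwarz in $\theta$ then gives \eqref{eq:T2}.
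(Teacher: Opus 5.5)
Your argument for \eqref{eq:T1} is correct, and it takes a different route from the paper's. Since $y(\theta)$ is exactly stationary for the interpolated problem, the Hessian along the path is exactly $I-P(y(\theta))$, with $P(y(\theta))$ stochastic and with entries $O(\beta)$. The splitting $H^{-1}g=g+PH^{-1}g$ then gives $\|\dot y\|_\infty\le C\beta$ with no Taylor remainder at all. The paper instead minimizes the replaced objective on a Euclidean ball and then needs the single-response case of Lemma~\ref{lem:response-energy}, which already absorbs a quadratic remainder. (A minor point: on the window $\|y-x\|_\infty\le1$, conductance comparison gives a gap of order $e^{-4}\kappa$, not $\kappa/2$; shrink the window or accept a constant gap.) The linear part of your \eqref{eq:T2} argument is also right, and it is the paper's estimate \eqref{eq:linear-square}. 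It works because one vector $w=H^{-1}Pe_i$ serves all replacements, so $\sum_j\E'_j(w^{\T}g_j)^2$ is a quadratic form in a matrix of bounded norm.

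The gap is the nonlinear remainder in \eqref{eq:T2}, which you flag but do not control. The Taylor remainder obeys only $|r(\delta)|\le C(I+P)\delta^{\circ2}$. Its contribution to coordinate $i$ therefore involves the unknown responses $\delta^{(j)}_a$ at the neighbors $a$ of $i$, not quantities you have already bounded. Your first option is thus circular, and its undefined term $|q_i^{\T}\cdot|$ hides exactly this dependence. Any per-replacement bound obtainable from $\|\delta^{(j)}\|_\infty\le C\beta$ and $\|\delta^{(j)}\|_2^2\le C\beta$ alone is non-localized, of order $\beta^2$. Summing $N$ of them gives $O(N\beta^4)$, which is useless: nothing bounds $N$ in terms of $\beta$ (only $N\ge1/\beta$), and Theorem~\ref{thm:T11} allows $N$ up to $e^{c/\beta}$.

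The fallback fails for the same reason. Along the path of replacement $j$, the vector $w_j(\theta)=H(y_j(\theta))^{-1}P(y_j(\theta))e_i$ depends on $j$ and on the fresh column, so ``square-summed exactly as above'' is unavailable. The $j$-dependent part of $w_j(\theta)-w$ is driven by $y_j(\theta)-x$, so it is the same remainder in disguise.

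The missing idea is absorption in the square-sum norm itself:
\begin{itemize}
\item Set $\mathcal V=\max_a\sum_j\E'_j[(\delta^{(j)}_a)^2]$, which is finite a priori (at most $N\max_j\|\delta^{(j)}\|_2^2$).
\item With $z_a=(I-P+\Pi_0)^{-1}e_a$ and $f_j=p_{v_j}(x)+p_{v'_j}(x)$, write $|\delta^{(j)}_a|\le C\langle|z_a|,f_j\rangle+C\langle(I+P)|z_a|,(\delta^{(j)})^{\circ2}\rangle$.
\item Use Cauchy--Schwarz in the form $\sum_j\E'_j\langle h,(\delta^{(j)})^{\circ2}\rangle^2\le E\,\mathcal V\,\|h\|_2^2$, where $E=\max_j\|\delta^{(j)}\|_2^2=O(\beta)$ is the energy bound you already proved.
\end{itemize}
This yields $\mathcal V\le C\beta+C\beta\mathcal V$, hence $\mathcal V=O(\beta)$ for small $\beta$. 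That is precisely Lemma~\ref{lem:response-energy}, applied in its integral form over the fresh inputs.
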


We apply the lemma to the interpolated columns $v_j(t)$. On
$\mathcal R_t$ and its one-column neighbors, the preceding raw-gap
comparison supplies its spectral hypothesis at every bounded
stationary point, with constants uniform in $t$.

\subsection{Proof of the independent-column theorem}
\label{sec:model-concentration}\label{sec:model-bootstrap}
\label{sec:bipboot}\label{sec:biprate}

\begin{proof}[Proof of Theorem~\ref{thm:T11}]
Fix $s$ in the stated range and use the cap
$\tau=A\sqrt{\beta s}$ from the proof strategy. We choose $A$ large
and then the theorem's constant $c$ small, so that $\tau\le1/4$
and the one-column changes $C\beta$ are at most $\tau/8$.
All local estimates below are taken in the fixed box of radius one;
their constants therefore do not depend on $s$ or $\tau$.

The eligible samples with a stationary point form an open set
relative to the column-stochastic parameter space, and that point
depends continuously on the columns there by the
implicit-function theorem. Setting the cap to $\tau$ on the
complement therefore gives a measurable statistic.
For $S\subseteq[N]$, let $a_S$ be its expected value when precisely
the columns in $S$ are drawn from their original laws and the rest
equal their means. Then
\[
 \E Z_t=\sum_{S\subseteq[N]}a_S t^{|S|}(1-t)^{N-|S|}.
\]
Thus $\E Z_t$ is continuous on $[0,1]$. At $t=0$ the mean columns
are balanced and have centered gap at least $\kappa$, so $Z_0=0$.

\proofstep{Step 1: Concentrate the cap and cutoff coordinates.}

We use Corollary~\ref{cor:poly-tail} with $h=s$, $a=1$ and $C_0=4$.
Choose a cap-tail exponent $D_Z\ge8$ larger by at least one than
the exceptional exponent needed for coordinate tails at deviation
$1/32$ with exponent $8$. Next choose a raw exceptional exponent
$D_R\ge D_Z+1$ large enough for cap tails at deviation $1/6$ with
exponent $D_Z$. The raw estimate and $\beta s\le c$ give
\[
 \PP(\mathcal R_t^c)\le Ne^{-\kappa/(8\beta)}
       \le e^{-D_Rs}
\]
when $c$ is small enough. Here $N\le e^s/2$.

On $\mathcal R_t$, the one-sided square proxy $\mathcal V_+(Z_t)$
(defined in Lemma~\ref{lem:rare-exception}) is at most $C\beta$. Indeed, if $Z_t<\tau$, fix a signed coordinate attaining
the original maximum norm. Every positive decrement of the cap is
bounded by the decrement of this same coordinate, so
Lemma~\ref{lem:T12} applies. If $Z_t=\tau$ and a replacement lowers
it, that replacement has a bounded exact solution and raw gap at
least $\kappa/2-\beta$. Applying the local lemma in reverse produces
an original solution of norm at most $\tau+C\beta$. Its maximizing
signed coordinate again bounds all positive cap decrements;
uniqueness makes this solution independent of the lowering
replacement used to construct it. If no
replacement lowers the cap, the proxy is zero. Globally the proxy
is at most $\tau^2N$.

For $Z_t/\tau$, the good-event proxy multiplied by $s$ is thus at
most $C\beta s/\tau^2=C/A^2$, and the global proxy is at most
$N\le e^s$. Increasing $A$ makes Corollary~\ref{cor:poly-tail}
applicable. At every deterministic $t\in[0,1]$ with
$\E Z_t\le\tau/3$, it gives
\begin{equation}\label{eq:model-cap-tail}
 \PP(Z_t\ge\tau/2)\le e^{-D_Zs}.
\end{equation}

Fix such a $t$, and use the cap itself as the cutoff:
\begin{equation}\label{eq:cap-cutoff-identity}
 Y=\begin{cases}x_t,&Z_t<\tau,\\0,&Z_t=\tau,\end{cases}
 \qquad
 Z_t=\|Y\|_\infty+\tau\one_{\{Z_t=\tau\}}.
\end{equation}
Thus $\|Y\|_\infty\le\tau$ on every sample. On
$\mathcal D=\mathcal R_t\cap\{Z_t<\tau/2\}$, every replacement
also lies below the cutoff, since $C\beta\le\tau/8$.
Both signed coordinate proxies of $Y$ are consequently at most
$C\beta$ there, and at most $4\tau^2N$ globally. Also
$\PP(\mathcal D^c)\le2e^{-D_Zs}\le e^{-(D_Z-1)s}$.
Lemma~\ref{lem:rare-exception} bounds the variance, and
Corollary~\ref{cor:poly-tail}, applied to $\pm Y_i/\tau$ and
followed by a union bound, gives
\begin{equation}\label{eq:T8}
 \max_i\Var Y_i\le C\beta,
 \qquad
 \PP\bigl(\|Y-\E Y\|_\infty\ge\tau/32\bigr)\le e^{-7s}.
\end{equation}
For the variance bound, the exceptional contribution is at most
$8A^2\beta sN e^{-D_Zs}=O(\beta)$. The constant may depend on the
now fixed $A$. These estimates use the unconditioned product law of
the $N$ interpolated columns at a fixed deterministic $t$.
No simultaneous good event over the path is needed.

\proofstep{Step 2: Locate the mean through the population equation.}

At this fixed $t$, write $w_j=v_j(t)$,
$g_{\mathrm{emp}}=\nabla\Phi_t$,
$g_{\mathrm{pop}}(y)=\E[g_{\mathrm{emp}}(y)]$
and $m_Y=\E Y$. The population equation at the mean splits into
three errors:
\begin{equation}\label{eq:mean-error-decomposition}
 g_{\mathrm{pop}}(m_Y)
 =\E g_{\mathrm{emp}}(Y)
   +\E\bigl[g_{\mathrm{pop}}(Y)-g_{\mathrm{emp}}(Y)\bigr]
 +g_{\mathrm{pop}}(m_Y)-\E g_{\mathrm{pop}}(Y).
\end{equation}
The cap tail, resampling, and coordinate variances will control
these three terms, respectively.

The cutoff term is supported on $\{Z_t=\tau\}$. There $Y=0$ and
$\|g_{\mathrm{emp}}(0)\|_\infty\le N\beta+1\le2N\beta$;
elsewhere $g_{\mathrm{emp}}(Y)=0$. By
\eqref{eq:model-cap-tail},
$\|\E g_{\mathrm{emp}}(Y)\|_\infty
 \le2N\beta\PP(Z_t=\tau)\le C\beta$.

For the resampling term, $Y^{(j)}$ is independent of the original
$w_j$ and has the same law as $Y$. On $\mathcal D$, the change in
$p_{w_j}(Y)_i$ is at most $C\beta(w_j)_i$; on its complement it is
at most $C(w_j)_i\le C\beta$.

Population balance therefore gives
\[
 \|\E[g_{\mathrm{emp}}(Y)-g_{\mathrm{pop}}(Y)]\|_\infty
 \le C\beta+CN\beta\PP(\mathcal D^c)\le C\beta.
\]
Combining the cutoff and resampling bounds gives
$\|\E g_{\mathrm{pop}}(Y)\|_\infty\le C\beta$.

To pass from $Y$ to its mean, direct differentiation of a tilted
column $p=p_v(y)$ gives, with $\langle p,h\rangle=\sum_a p_ah_a$,
\begin{equation}\label{eq:column-second-derivative}
 D^2p_v(y)_i[h,h]
 =p_i\biggl[(h_i-\langle p,h\rangle)^2
       -\Bigl(\sum_a p_ah_a^2-\langle p,h\rangle^2\Bigr)\biggr].
\end{equation}
Here $D^2f(y)[h,h]=\left.\frac{d^2}{ds^2}f(y+sh)\right|_{s=0}$
is the second directional derivative. The absolute value in
\eqref{eq:column-second-derivative} is at most
$C p_i(h_i^2+\sum_a p_ah_a^2)$. Bounded tilting and population
balance consequently give
\[
 |D^2g_{\mathrm{pop},i}(y)[h,h]|
 \le C\Bigl(h_i^2+\sum_a(P_{\mathrm{pop}}(t))_{ia}h_a^2\Bigr).
\]
Taylor expansion about $m_Y$ has zero expected linear term.
Since $P_{\mathrm{pop}}(t)$ is stochastic, \eqref{eq:T8} bounds its
expected remainder by $C\beta$. Hence
$\|g_{\mathrm{pop}}(m_Y)\|_\infty\le C\beta$.

The mean-value Hessian
$\mathsf L=\int_0^1Dg_{\mathrm{pop}}(u m_Y)\,du$ is a weighted
Laplacian with a fixed centered gap $h_0>0$. For $u\ge0$, the matrix
exponential $e^{-u\mathsf L}$ has nonnegative entries and row sums
one, while its operator norm on $\ones^\perp$ is at most $e^{-h_0u}$.
Consequently,
$\|e^{-u\mathsf L}-\Pi_0\|_{\infty\to\infty}
 \le\min\{2,\sqrt N e^{-h_0u}\}$.
Integrating this bound in
$\mathsf L^\dagger=\int_0^\infty(e^{-u\mathsf L}-\Pi_0)\,du$
shows that $\|\mathsf L^\dagger\|_{\infty\to\infty}\le C\log(2N)$.
Since $m_Y$ is centered and
$\mathsf Lm_Y=g_{\mathrm{pop}}(m_Y)-g_{\mathrm{pop}}(0)$, we obtain
\begin{equation}\label{eq:model-mean}
 \|m_Y\|_\infty\le C\beta\log(2N)\le\tau/64.
\end{equation}
The last inequality follows by decreasing $c$, because
$\beta\log(2N)/\tau\le\sqrt c/A$.

\proofstep{Step 3: Improve the cap and continue to the endpoint.}

Since $\|Y-m_Y\|_\infty\le2\tau$, \eqref{eq:T8} gives
$\E\|Y-m_Y\|_\infty\le\tau/32+2\tau e^{-7s}$.
Taking expectations in \eqref{eq:cap-cutoff-identity} and using
\eqref{eq:model-mean} and \eqref{eq:model-cap-tail}, we obtain
\[
 \begin{aligned}
 \E Z_t&\le\|m_Y\|_\infty+\E\|Y-m_Y\|_\infty
            +\tau\PP(Z_t=\tau)\\
 &\le\tau/64+\tau/32+2\tau e^{-7s}+\tau e^{-D_Zs}
 \le\tau/8.
 \end{aligned}
\]

Thus $\E Z_t\le\tau/3$ implies $\E Z_t\le\tau/8$.
Continuity and $\E Z_0=0$ exclude a first parameter at which the
expectation reaches $\tau/3$. The improvement and
\eqref{eq:model-cap-tail} therefore hold throughout $[0,1]$.
At $t=1$, outside an event of probability at most
$e^{-D_Zs}\le e^{-s}$, we have $Z_1<\tau/2$. By definition of the
cap, the original columns then have a centered exact potential of
norm less than $\tau/2=(A/2)\sqrt{\beta s}$, as required.
\end{proof}

\subsection{A common response estimate}
\label{sec:model-local-proof}
We isolate the deterministic estimate shared by the model and clique
arguments. The key fact is that each inverse Hessian column is close
to a coordinate direction in Euclidean norm. This controls the
linear responses after summing their squares. The Taylor errors are
quadratic, and their square sum is bounded by the individual
response energy times the square sum we seek. Small energy lets us
absorb this term. We write $v^{\circ2}$ for coordinatewise squaring.

\begin{lemma}[Square sums of nonlinear responses]\label{lem:response-energy}
For every $K\ge1$ there are constants $c_K,C_K>0$ such that the
following holds for every $\gamma>0$. Let $\mathsf H$ be a symmetric
$d\times d$ matrix, and put
$\mathsf D=\diag(\mathsf H)$. Suppose that
\begin{equation}\label{eq:energy-Hessian}
 K^{-1}I\preceq\mathsf H\preceq KI,
 \qquad
 \max_a\| (\mathsf H-\mathsf D)e_a\|_2^2\le K\gamma.
\end{equation}
Let $(f_j)_j$ be a finite nonempty family of nonnegative vectors in $\R^d$,
with weights $\omega_j\ge0$, and put
\begin{equation}\label{eq:energy-forcing}
 A_f=\sum_j\omega_j f_jf_j^{\T}.
\end{equation}
Suppose that vectors $\delta^{(j)}$ satisfy
$\max_j\|\delta^{(j)}\|_2^2\le c_K$ and admit representations
\begin{equation}\label{eq:energy-response}
 \delta^{(j)}=\mathsf H^{-1}(g_j+r_j),
 \qquad |g_j|\le Kf_j,
 \qquad |r_j|\le K\mathsf B(\delta^{(j)})^{\circ2},
\end{equation}
where $\mathsf B$ is an entrywise nonnegative matrix with
$\|\mathsf B\|\le K$, and the two inequalities are coordinatewise.
Then
\[
 \max_a\sum_j\omega_j(\delta_a^{(j)})^2
 \le C_K\bigl(\max_a(A_f)_{aa}+\gamma\|A_f\|\bigr).
\]
In particular, for one response with unit weight,
\begin{equation}\label{eq:single-response}
 \|\delta\|_\infty^2
 \le C_K\bigl(\|f\|_\infty^2+\gamma\|f\|_2^2\bigr).
\end{equation}
The conclusion remains valid for measurable families indexed by a finite measure space, with weighted sums replaced by integrals and maxima over $j$ by essential suprema, provided the forcing second moments are finite.
\end{lemma}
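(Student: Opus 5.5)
We fix $a$ and aim to bound $S_a=\sum_j\omega_j(\delta_a^{(j)})^2$. The idea is to split each response into a linear part and a quadratic error, $\delta^{(j)}=\mathsf H^{-1}g_j+\mathsf H^{-1}r_j$, so that $S_a\le 2L_a+2Q_a$. Here $L_a=\sum_j\omega_j(e_a^{\T}\mathsf H^{-1}g_j)^2$ and $Q_a=\sum_j\omega_j(e_a^{\T}\mathsf H^{-1}r_j)^2$.

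\emph{Linear part.} Put $h_a=\mathsf H^{-1}e_a$. From $\mathsf H h_a=e_a$ we get $\mathsf D h_a=e_a-(\mathsf H-\mathsf D)h_a$. The diagonal satisfies $K^{-1}\le\mathsf D_{aa}\le K$. To show $h_a$ is close to $\mathsf D_{aa}^{-1}e_a$ in $\ell_2$, we write
\[
 h_a=\mathsf D_{aa}^{-1}e_a-\mathsf H^{-1}(\mathsf H-\mathsf D)\mathsf D_{aa}^{-1}e_a .
\]
This is the exact identity $\mathsf H^{-1}-\mathsf D_{aa}^{-1}I$ applied to $e_a$, using $(\mathsf H-\mathsf D_{aa}I)e_a=(\mathsf H-\mathsf D)e_a$. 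It gives $h_a=\mathsf D_{aa}^{-1}e_a+\eta_a$ with $\|\eta_a\|_2\le K\cdot K\cdot\sqrt{K\gamma}$. Since $|g_j|\le Kf_j$ coordinatewise,
\[
 (h_a^{\T}g_j)^2\le 2K^4 f_{j,a}^2+2\,(|\eta_a|^{\T}Kf_j)^2 .
\]
Summing with the weights $\omega_j$ yields
\[
 L_a\le C_K\bigl((A_f)_{aa}+|\eta_a|^{\T}A_f|\eta_a|\bigr)\le C_K\bigl((A_f)_{aa}+\gamma\|A_f\|\bigr),
\]
which is the target bound.

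\emph{Quadratic part.} Write $|e_a^{\T}\mathsf H^{-1}r_j|\le|h_a|^{\T}K\mathsf B(\delta^{(j)})^{\circ2}$ and set $b_a=K\mathsf B^{\T}|h_a|$, so $\|b_a\|_2\le K^3$ and the bound becomes $\sum_c b_{a,c}(\delta_c^{(j)})^2$. Cauchy--Schwarz splits one factor $\delta_c$ off each square:
\[
 \Bigl(\sum_c b_{a,c}(\delta_c^{(j)})^2\Bigr)^2\le\|\delta^{(j)}\|_2^2\sum_c b_{a,c}^2(\delta_c^{(j)})^2\le c_K\sum_c b_{a,c}^2(\delta_c^{(j)})^2 .
\]
Summing over $j$ gives
\[
 Q_a\le c_K\sum_c b_{a,c}^2S_c\le c_K K^6\max_cS_c .
\]
Now take the maximum over $a$ in $S_a\le2L_a+2Q_a$. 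Choosing $c_K\le 1/(4K^6)$, the term $2c_K K^6\max_c S_c$ is at most $\tfrac12\max_c S_c$ and can be absorbed into the left side. This absorption needs $\max_cS_c<\infty$, which holds trivially for finite families, and by the finite-second-moment assumption in the measure-space version. The absorbed inequality is the claim. The single-response bound \eqref{eq:single-response} is the case of one vector with unit weight, where $(A_f)_{aa}=f_a^2$ and $\|A_f\|=\|f\|_2^2$.

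\emph{Main obstacle.} The delicate step is the quadratic part. A naive bound on $\|\mathsf H^{-1}r_j\|_\infty$ would lose the uniform-in-$a$ control. The key is that $h_a$ and the rows of $\mathsf B^{\T}$ are bounded only in $\ell_2$, and Cauchy--Schwarz against the small energy $\|\delta^{(j)}\|_2^2\le c_K$ is what converts the quadratic error into a multiple of $\max_cS_c$. The measure-space extension follows verbatim with integrals replacing sums.
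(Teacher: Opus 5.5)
Your proposal is correct and follows the paper's proof essentially step for step. You use the same identity $\mathsf H^{-1}e_a=\mathsf D_{aa}^{-1}e_a+\eta_a$ with $\|\eta_a\|_2^2=O_K(\gamma)$ for the linear part, the same Cauchy--Schwarz against the energy $\|\delta^{(j)}\|_2^2$ to bound the quadratic part by $c_KK^6\max_c S_c$, and the same absorption.

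One small correction concerns the measure-space version. There, finiteness of $\max_c S_c$ follows from $(\delta_c^{(j)})^2\le\|\delta^{(j)}\|_2^2\le c_K$ together with finiteness of the total measure, as the paper notes. It is not supplied by the forcing second-moment assumption.
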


\begin{proof}
All constants in this proof depend only on $K$. For a coordinate $a$,
write $z_a=\mathsf H^{-1}e_a$. The identity
\begin{equation}\label{eq:inverse-column}
 z_a=\mathsf D_{aa}^{-1}e_a
       -\mathsf D_{aa}^{-1}\mathsf H^{-1}
          (\mathsf H-\mathsf D)e_a
     =\mathsf D_{aa}^{-1}e_a+\xi_a
\end{equation}
gives $\|\xi_a\|_2^2\le C\gamma$. Since $A_f$ is entrywise
nonnegative, $|z_a|\le\mathsf D_{aa}^{-1}e_a+|\xi_a|$ implies
\begin{equation}\label{eq:linear-square}
 \sum_j\omega_j\langle |z_a|,f_j\rangle^2
 =|z_a|^{\T}A_f|z_a|
 \le2\mathsf D_{aa}^{-2}(A_f)_{aa}
        +2\|A_f\|\|\xi_a\|_2^2
 \le C\bigl((A_f)_{aa}+\gamma\|A_f\|\bigr).
\end{equation}

Put $\mathcal V=\max_a\sum_j\omega_j(\delta_a^{(j)})^2$ and
$E=\max_j\|\delta^{(j)}\|_2^2$. For every fixed vector $h$,
Cauchy--Schwarz followed by summation gives
\begin{equation}\label{eq:energy-CS}
 \begin{aligned}
 \sum_j\omega_j\langle h,(\delta^{(j)})^{\circ2}\rangle^2
 &\le E\sum_a h_a^2\sum_j\omega_j(\delta_a^{(j)})^2\\
 &\le E\mathcal V\|h\|_2^2.
 \end{aligned}
\end{equation}
The response representation gives
\[
 |\delta_a^{(j)}|
 \le K\langle |z_a|,f_j\rangle
       +K\langle \mathsf B^{\T}|z_a|,(\delta^{(j)})^{\circ2}\rangle.
\]
Here $\|\mathsf B^{\T}|z_a|\|_2\le\|\mathsf B\|\|z_a\|_2\le K^2$.
Squaring, summing over $j$, and applying
\eqref{eq:linear-square}--\eqref{eq:energy-CS} therefore yields
\[
 \mathcal V\le
 C_1\bigl(\max_a(A_f)_{aa}+\gamma\|A_f\|\bigr)+C_2 E\mathcal V.
\]
The quantity $\mathcal V$ is finite: each coordinate square is at
most $E$, and the total weight or measure is finite. Since
$E\le c_K$, choosing $C_2c_K\le1/2$ proves the lemma. In the integral
case the same argument uses Tonelli's theorem in
\eqref{eq:energy-CS}.
\end{proof}

The two forcing terms control the linear response at a coordinate
and the contribution from the off-diagonal part of the Hessian.
The individual energy $E$ controls the nonlinear absorption.
The lemma applies both to one replacement and to their whole family.

\begin{proof}[Proof of Lemma~\ref{lem:T12}]
Write $p_j(y)=p_{v_j}(x+y)$ and
$p_j=p_j(0)$. For a fresh column $v'$, write
$u(y)=p_{v'}(x+y)$ and $u=u(0)$. All population expectations
in this proof hold $x$ fixed. Put $b=e^{2M}\beta$ and
$P=\sum_jp_jp_j^{\T}$.
At the original stationary point, the row loads are one.
Thus $P$ is symmetric, stochastic and positive semidefinite, with
diagonal at most $b$, and the Hessian is $I-P$.
Its centered gap is at least $\kappa$ by hypothesis. Here and below
constants may depend on $\kappa,M$.

\proofstep{Step 1: Find the new solution by local convexity.}

Let $\widetilde\Phi(y)$ be the replaced objective at $x+y$,
restricted to $\ones^\perp$. Its gradient at zero is $u-p_j$,
of Euclidean norm $O(\sqrt\beta)$. Its Hessian there loses
$\mathsf L_{p_j}\preceq bI$ and gains $\mathsf L_u\succeq0$.
By \eqref{eq:column-conductance}, its centered gap is at least
$\kappa/2$ throughout $\|y\|_\infty\le1/16$, for small enough
$\beta$. Consequently
\[
 \widetilde\Phi(y)-\widetilde\Phi(0)
 \ge -C\sqrt\beta\,\|y\|_2+\frac\kappa4\|y\|_2^2.
\]
On the boundary of a Euclidean ball in $\ones^\perp$ of radius
$C_1\sqrt\beta$, with $C_1$ sufficiently large, this is positive.
For small $\beta$ the ball lies in the working box, so its minimizer
$\delta$ is interior and stationary. The full gradient is centered,
so it vanishes there. Thus the new exact potential exists and
$\|\delta\|_2^2\le C\beta$.
Conductance comparison gives a positive centered Hessian at every
finite point, hence global uniqueness. The implicit-function theorem
makes the solution continuous in the replacing column and therefore
measurable.

\proofstep{Step 2: Use the original Hessian for every replacement.}

Let $G(y)=\sum_jp_j(y)-\ones$ and set $\mathsf H=I-P+\Pi_0$.
For $\delta=x'-x$, put
$r(\delta)=G(\delta)-G(0)-(I-P)\delta$.
The replacement equation and $\delta\perp\ones$ give
\[
 \delta=\mathsf H^{-1}
       \bigl(p_j(\delta)-u(\delta)-r(\delta)\bigr).
\]
Bounded tilting gives
$|p_j(\delta)-u(\delta)|\le C f_j$, where
$f_j=p_j+u\ge0$.
The second-derivative formula \eqref{eq:column-second-derivative},
summed over the original columns, gives the Taylor bound
below; bounded tilting and row balance at $y=0$ control its coefficients:
\[
 |r(\delta)|\le C(I+P)\delta^{\circ2}.
\]
The envelope $\mathsf B=I+P$ is entrywise nonnegative and has
operator norm at most two. The original $\mathsf H$ satisfies
\eqref{eq:energy-Hessian} with $\gamma=\beta$: indeed,
$\|Pe_i\|_2^2\le P_{ii}\le b$ and $1/N\le\beta$.
For one replacement, $f_j=p_j+u$ has maximum norm $O(\beta)$
and squared Euclidean norm $O(\beta)$. The energy bound from
Step~1 is small enough for Lemma~\ref{lem:response-energy}.
Its single-response estimate \eqref{eq:single-response}, with
$\gamma=\beta$, gives $\|\delta\|_\infty^2\le C\beta^2$,
proving the remaining bound in \eqref{eq:T1}.

\proofstep{Step 3: Sum the squared responses.}

For fresh tilted columns, let
$Q=\sum_j\E'_j[p_{v'_j}(x)p_{v'_j}(x)^{\T}]$.
Bounded tilting gives $Q\le e^{4M}P_{\mathrm{pop}}$ entrywise.
Population balance makes $P_{\mathrm{pop}}$ stochastic, with
diagonal at most $\beta$. Thus $Q$ has row sums at most $e^{4M}$,
so $\|Q\|\le e^{4M}$ and
$\max_i Q_{ii}\le e^{4M}\beta$.
For the forcing family,
\[
 A_f:=\sum_j\E'_j[f_jf_j^{\T}]
 \preceq2P+2Q.
\]
Thus $\|A_f\|\le C$ and $\max_i(A_f)_{ii}\le C\beta$.
All responses have energy $O(\beta)$ by Step~1. Apply
Lemma~\ref{lem:response-energy} with $\gamma=\beta$,
$\mathsf B=I+P$.
The replacement index consists of the column index together with
its fresh input; its measure is the sum of the $N$ fresh column
laws. The integral form of the lemma gives \eqref{eq:T2}.
\end{proof}

\section{From the model to clique inheritance}
\label{sec:proof-architecture}
The clique problem has two additional features: one edge indicator
affects many cliques, and changing it changes the potential's
coordinate set. Squared-overlap estimates control the first effect;
virtual coordinates resolve the second. This section defines these
objects and reduces Theorem~\ref{thm:quantitative} to a strict
improvement of an expected cap.

The proof follows the model with edge indicators in place of columns
and $\gamma_{k,r}$ in place of the square-sum scale $\beta$.
With a cap at scale $\sqrt{\gamma_{k,r}\log n}$, we use the same
three steps: concentrate the cutoff potential, locate its mean
through the population equation, and continue the expected cap from
the full host to the target density. Both proofs vary the product
law from a deterministic balanced starting point.

The clique population Hessian has a uniformly bounded inverse in
the maximum norm (Section~\ref{sec:inverse}). Consequently the individual bound
$O(\sqrt{\gamma_{k,r}})$ supplied by the energy estimate is enough
to locate the mean. In the model, the population inverse costs
$\log N$, which is why its local lemma also proves the sharper
$O(\beta)$ individual bound.

We retain the norm and indicator notation from the model. For an
edge vector $v$ and an edge set $S$, write
$v(S)=\sum_{e\in S}v_e$. We write $Q\ni e$ for $e\in E(Q)$.
For $Q\in\cK_r(G)$, let $\ones_Q\in\R^{E(G)}$ be the indicator of
$E(Q)$. The seed matrix from Section~\ref{sec:seed-intro} then has
the Gram representation
$A_\phi=\sum_{Q\in\cK_r(G)}\phi(Q)\ones_Q\ones_Q^{\T}$.
For $Q\in\cK_r(F)$ with $F\subseteq G$, the same symbol $\ones_Q$
denotes its restriction to $E(F)$.

Throughout the clique proof, $G$ and $\phi$ satisfy the seed
assumptions of Theorem~\ref{thm:quantitative}. An empty host has the
empty fractional decomposition, so we henceforth assume $E(G)$ is
nonempty. In particular, $0<\kappa\le1$, since $A_\phi$ has diagonal
one. Constants may depend on $k,r,R,\kappa$ and fixed tail or box
parameters, but not on $n$, the density, or the chosen host and seed.

\subsection{The error parameter}\label{sec:quantitative}
For the proof, write
$\mu_s(n,p)=n^{s-k}p^{\binom{s}{k}-1}$ for the extension scale associated
with $K_s^{(k)}$, where $k+1\le s\le r$. The error parameter is the
largest reciprocal of these scales and depends only on the two
endpoints:
\begin{equation}\label{eq:gamma-endpoints}
 \gamma_{k,r}(n,p)=\max_{k+1\le s\le r}\frac1{\mu_s(n,p)}
 =\max\Bigl\{\frac1{np^k},\frac1{\mu_r(n,p)}\Bigr\}.
\end{equation}
The endpoint identity follows from discrete concavity of
$(s-k)\log n+(\binom{s}{k}-1)\log p$.
Thus the maximum in Theorem~\ref{thm:quantitative} is exactly
$\sqrt{\gamma_{k,r}(n,p)\log n}$. It suffices to prove the theorem
under $\mu_r(n,p)\ge C_{\mathrm{ext}}\log n$, taking
$C=C_{\mathrm{ext}}^{1/(\binom{r}{k}-1)}$ in its statement. Throughout the proof,
$C_{\mathrm{ext}}$ denotes this extension-scale constant, rather than the coefficient
of the lower bound on $p$.

Put $a=(r-k)/(\binom{r}{k}-1)$. Since the successive differences of $\binom{s}{k}-1$ are strictly
increasing for $s\ge k$, we have $a\le1/k$ and, for every integer
$s$ with $k+1\le s<r$,
\begin{equation}\label{eq:strict-density}
 s-k-a\Bigl[\binom{s}{k}-1\Bigr]>0.
\end{equation}
Equality in $a\le1/k$ holds only when $r=k+1$. In particular, the
extension condition implies $p\ge n^{-a}$ for sufficiently large $n$.

\subsection{The scale of replacement changes}
We abbreviate $\mu=\mu_r(n,p)$ and $\gamma=\gamma_{k,r}(n,p)$ when
the density is clear, and use the corresponding quantities at $q$
when another density is specified. The extension condition and
\eqref{eq:strict-density} give, uniformly in $p$,
\begin{equation}\label{eq:gamma-small}
 \gamma_{k,r}(n,p)\log n\le C_{\mathrm{ext}}^{-1}+o(1).
\end{equation}
Thus $\gamma\log n$ can be made arbitrarily small by increasing
$C_{\mathrm{ext}}$ and then $n$. The scale $\gamma$ is nonincreasing
in the density.

A single edge replacement changes each extended potential coordinate
by at most $O(\sqrt\gamma)$. Local convexity supplies this bound
through a squared Euclidean displacement estimate $O(\gamma)$ on
the original retained edges. More importantly, the sum of squared
changes of any fixed coordinate over all replacements is also
$O(\gamma)$.
This second estimate gives concentration at the scale
$\sqrt{\gamma\log n}$. It must be proved directly: combining an
individual overlap bound with a first-moment bound loses the
required scale, as Remark~\ref{rem:max-obstruction} explains.

\subsection{The balancing equations on a fixed coordinate space}
\label{sec:potential-setup}
For a density $q\in(0,1]$, a subhypergraph $F\subseteq G$ and
$v\in\R^{E(F)}$, put
\[
 \Phi_{F,q}(v)=q^{1-\binom{r}{k}}\sum_{Q\in\cK_r(F)}\phi(Q)e^{v(E(Q))}
                       -\sum_{e\in E(F)}v_e.
\]
Write $\mathsf H_F(v)=\nabla^2\Phi_{F,q}(v)$ and suppress the fixed
parameter $q$ in $\Phi_F=\Phi_{F,q}$. For $Q\in\cK_r(F)$, put
$w_Q(v)=q^{1-\binom{r}{k}}\phi(Q)e^{v(E(Q))}$.
We call the loads and Hessian at $v=0$ \emph{raw}, keeping the
factor $q^{1-\binom{r}{k}}$ in the weights.
As in the model, $v$ is stationary, or exact, if
$\nabla\Phi_{F,q}(v)=0$: the weights $w_Q(v)$ then give every
edge load one and hence form a fractional decomposition of $F$.
Unlike column scaling, this problem has no
additive normalization freedom: adding a constant $c$ to the potential
multiplies all clique weights by $e^{\binom{r}{k}c}$. We therefore impose no
centering condition on a clique potential.
To compare different edge sets, we extend a potential to every host
edge. For an absent edge $e$, let
\[
 s_e(v)=\sum_{\substack{Q\in\cK_r(G),\ e\in E(Q)\\
                 E(Q)\setminus\{e\}\subseteq E(F)}}
           \phi(Q)e^{v(E(Q)\setminus\{e\})}.
\]
If all these completion sums are positive, define the extended potential by
\begin{equation}\label{eq:virtual-definition}
 \widehat v_e=
 \begin{cases}
 v_e,&\text{if }e\in E(F),\\
 \log(q^{\binom{r}{k}-1}/s_e(v)),&\text{if }e\in E(G)\setminus E(F).
 \end{cases}
\end{equation}
We call the second value a \emph{virtual coordinate}. It is the value
that would balance edge $e$ if that edge were added while the present
coordinates were held fixed.

For a stationary potential the extension satisfies the edge equation
also at absent host edges. This supplies a fixed coordinate space
$\R^{E(G)}$ on which expectations and replacements can be compared.
If a hypergraph has a zero completion sum, the capped statistic below is
defined without using an extended potential.

\subsection{A capped statistic and the existence argument}
\label{sec:cap-setup}
We use the model's cap construction on the fixed host-edge space.
The cap is defined on every hypergraph, including those whose
balancing equations have no finite solution. As in the model, only
the expectation of the cap needs to vary continuously.

Fix a target density $p$ in the range of Theorem~\ref{thm:quantitative}. Let independent uniform $[0,1]$ variables $U_e$, indexed by all $k$-sets, generate the coupled hypergraphs $G(q)=G\cap\{e:U_e\le q\}$ for $q\in[p,1]$. Set
$\tau_n=C_{\mathrm{cap}}\sqrt{\gamma_{k,r}(n,p)\log n}$,
where $C_{\mathrm{cap}}>0$ is a sufficiently large constant, chosen before the density
constant $C_{\mathrm{ext}}$ and the lower bound $n_0$. This cap remains fixed while $q$ varies.

Call a fixed subhypergraph $F\subseteq G$ \emph{eligible} if each
host edge $e$ lies in a clique $Q$ with $\phi(Q)>0$ and
$E(Q)\setminus\{e\}\subseteq E(F)$, and the matrix
$\sum_{Q\in\cK_r(F)}\phi(Q)\ones_Q\ones_Q^{\T}$ is positive definite
on $\R^{E(F)}$. Only cliques with $\phi(Q)>0$ contribute to this
matrix. When $F$ is eligible and a finite stationary potential $u_{F,q}$ exists, put $Z_F(q)=\min\{\tau_n,\|\widehat u_{F,q}\|_\infty\}$; otherwise put $Z_F(q)=\tau_n$. Uniqueness of the potential in this definition is included in the next lemma. Write $Z(q)=Z_{G(q)}(q)$ and $\bar Z(q)=\E Z(q)$. Thus $Z_F(q)$ is the cap for a fixed hypergraph, $Z(q)$ is its random
value, and $\bar Z(q)$ is its expectation.

\begin{lemma}\label{lem:cap-continuity}
The capped statistic is well defined on every sample. The function $\bar Z$ is continuous on $[p,1]$, and $\bar Z(1)=0$. For every $q\in[p,1]$, if $Z(q)<\tau_n$, then $G(q)$ has a unique finite stationary potential $u$ satisfying
$\|u\|_\infty\le\|\widehat u\|_\infty=Z(q)$.
\end{lemma}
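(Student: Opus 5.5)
The plan is to check the four assertions in the order uniqueness, the value at $q=1$, continuity, and the final implication.

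\emph{Well-definedness and uniqueness.} Fix an eligible $F$. The function $\Phi_{F,q}$ is a positive combination of exponentials of linear forms minus a linear form, so it is convex. Its Hessian is
\[
 \mathsf H_F(v)=\sum_{Q\in\cK_r(F)}w_Q(v)\ones_Q\ones_Q^{\T}.
\]
Each weight satisfies $w_Q(v)\ge q^{1-\binom rk}\phi(Q)e^{-\binom rk\|v\|_\infty}$. Hence, by eligibility, $\mathsf H_F(v)$ is positive definite at every finite $v$. So $\Phi_{F,q}$ is strictly convex and has at most one stationary point, and $Z_F(q)$ does not depend on any choice.

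Eligibility also makes every completion sum $s_e(v)$ positive. Thus $\widehat u_{F,q}$ is defined whenever $u_{F,q}$ exists, and $Z_F(q)$ is a well-defined number in $[0,\tau_n]$ on every sample. Measurability is immediate, because $Z(q)$ depends on the sample only through the finite random set $G(q)$.

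\emph{The value at $q=1$.} Here $G(1)=G$, and every clique has full weight. The potential $v=0$ gives loads $\sum_{Q\ni e}\phi(Q)=1$, because $\phi$ is a fractional decomposition, so $v=0$ is stationary. Eligibility of $G$ follows from two facts. First, $A_\phi\succeq\kappa I$ is positive definite. Second, every host edge lies in some clique with $\phi(Q)>0$, since its load is one. There are no absent edges, so $\widehat u=u=0$ and $\bar Z(1)=0$.

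\emph{Continuity.} $G$ is a fixed finite hypergraph. Hence
\[
 \bar Z(q)=\sum_{F\subseteq G}Z_F(q)\,q^{|E(F)|}(1-q)^{|E(G)|-|E(F)|}.
\]
The binomial factors are polynomials in $q$, so it suffices to show that each $q\mapsto Z_F(q)$ is continuous on $[p,1]$. Eligibility of $F$ does not involve $q$.

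Write $\Phi_{F,q}(v)=q^{1-\binom rk}\Psi(v)-\ones^{\T}v$. Then $v$ is stationary exactly when $\nabla\Psi(v)=q^{\binom rk-1}\ones$. The equivalent shift $v\mapsto v+c\ones$ would rescale all weights by $e^{\binom rk c}$, but we avoid it and use the implicit-function theorem directly. The Jacobian of the stationarity equation in $v$ is $\mathsf H_F(v)\succ0$, so the set of $q$ where $u_{F,q}$ exists is open and $u_{F,q}$ is continuous, indeed smooth, there. The virtual coordinates, being logarithms of positive continuous functions, are continuous as well.

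\textbf{Main obstacle.} The delicate point is the boundary of the set where the solution exists: as $q$ leaves it, $\|\widehat u_{F,q}\|_\infty$ must approach the cap. Suppose instead that $q_m\to q_*$ with $\|u_{F,q_m}\|_\infty\le\|\widehat u_{F,q_m}\|_\infty<\tau_n$. Then the potentials stay in a compact set, so a subsequence converges to a stationary point at $q_*$. By uniqueness and the implicit-function theorem, the solution then exists near $q_*$. Hence $\min\{\tau_n,\|\widehat u_{F,q}\|_\infty\}$ is continuous, since it is continuous on the open set and tends to $\tau_n$ at its boundary relative to $[p,1]$. This needs $\|u\|_\infty\le\|\widehat u\|_\infty$, which holds trivially because $\widehat u$ extends $u$. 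This compactness argument is the step I expect to require the most care.

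\emph{The final claim.} If $Z(q)<\tau_n$, then by definition $G(q)$ is eligible and has a finite stationary potential $u$, unique by the strict convexity above. Moreover $Z(q)=\|\widehat u\|_\infty\ge\|u\|_\infty$, again because $\widehat u$ restricts to $u$ on $E(G(q))$.
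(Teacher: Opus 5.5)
Your argument is correct, and it matches the paper's on well-definedness, uniqueness, the value $\bar Z(1)=0$, and the final implication. The difference is in the continuity of $q\mapsto Z_F(q)$.

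The paper uses exactly the shift you set aside. If $u$ is stationary for $\Phi_{F,q}$, then $u+c\ones$ with $c=\frac{\binom{r}{k}-1}{\binom{r}{k}}\log(q'/q)$ is stationary for $\Phi_{F,q'}$. The factor $e^{\binom{r}{k}c}=(q'/q)^{\binom{r}{k}-1}$ that the shift puts on every clique weight is cancelled by replacing $q^{1-\binom{r}{k}}$ with $(q')^{1-\binom{r}{k}}$. Each completion sum involves $\binom{r}{k}-1$ side coordinates, so every virtual coordinate moves by the same $c$. Hence, for an eligible $F$, a stationary potential exists either for all $q$ or for none. In the first case $\widehat u_{F,q}=\widehat u_{F,1}+\frac{\binom{r}{k}-1}{\binom{r}{k}}(\log q)\ones$, so $Z_F$ is either constantly $\tau_n$ or visibly continuous.

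Your implicit-function-plus-compactness argument is also sound. Joint continuity of $\nabla\Phi_{F,q}(v)$ for $q\ge p>0$ makes bounded limits stationary, and the positive definite Hessian then reopens the existence set. However, the boundary behaviour it is built to control never actually occurs here. Your route is more robust: it needs only jointly continuous parameter dependence and a nondegenerate Hessian, not this exact scaling. The paper's route is shorter and yields the stronger structural fact that existence does not depend on $q$.
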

\begin{proof}
The kernel consists of vectors orthogonal to every supported clique
vector $\ones_Q$, independently of the positive weights.
Thus every positive tilt on an eligible hypergraph has a positive definite Hessian. Its objective is strictly convex, and a finite stationary potential, when it exists, is globally unique. Eligibility also makes every virtual coordinate finite. Setting the cap to $\tau_n$ on all remaining samples therefore defines a bounded statistic on the entire product space, without assuming existence on a good event.

For a fixed eligible hypergraph, existence is independent of the
positive normalization parameter. Indeed, changing $q$ to $q'$ adds
$\frac{\binom{r}{k}-1}{\binom{r}{k}}\log(q'/q)\one$ to both the actual
and the extended stationary potential. For the actual potential this
shift preserves every clique weight. Substituting it into the virtual
coordinate formula gives the same shift there, because a completion
sum contains $\binom{r}{k}-1$ side coordinates. Thus $Z_F(q)$ is
continuous. If $F$ is ineligible or has no finite stationary potential,
its cap is constantly $\tau_n$.

For fixed $n$, the expectation $\bar Z(q)$ is a finite sum over subhypergraphs $F\subseteq G$ of these continuous functions times $q^{|E(F)|}(1-q)^{|E(G)|-|E(F)|}$. It is therefore continuous on $[p,1]$. At $q=1$, the seed balance gives the stationary point $u=0$, and the seed gap makes $G$ eligible. Thus $\bar Z(1)=0$. Nonhost uniforms do not affect the statistic.

Finally, $Z(q)<\tau_n$ excludes the cases in which the cap was set to $\tau_n$ and also excludes truncation of its norm. The stationary potential exists and is unique, and its restriction to present edges has norm at most that of its extension. This proves the last assertion.
\end{proof}

On the fixed host-edge space, define $Y(q)=\widehat u_{G(q),q}$
if $Z(q)<\tau_n$, and set $Y(q)=0$ when $Z(q)=\tau_n$.
Thus $\|Y(q)\|_\infty\le\tau_n$, and the exact identity
\eqref{eq:cap-cutoff-identity} holds with $Z(q),Y(q),\tau_n$ in
place of $Z_t,Y,\tau$. The next proposition, proved in
Section~\ref{sec:bootstrap}, is the only probabilistic input
needed for the existence argument.

\Needspace{6\baselineskip}
\begin{proposition}[Cap improvement]\label{prop:bootstrap-inputs}
For the fixed parameters in Theorem~\ref{thm:quantitative}, choose
$C_{\mathrm{cap}}$ sufficiently large and then $C_{\mathrm{ext}},n_0$ sufficiently large. For every
deterministic $q\in[p,1]$, if $\bar Z(q)\le \tau_n/3$, then
\[
 \PP(Z(q)\ge\tau_n/2)\le n^{-b},
 \qquad \bar Z(q)\le\tau_n/8.
\]
\end{proposition}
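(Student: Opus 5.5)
The plan follows the three steps of the proof of Theorem~\ref{thm:T11}, with the independent edge indicators $\one\{U_e\le q\}$ ($e\in E(G)$) in place of the columns and $\gamma=\gamma_{k,r}(n,q)$ in place of $\beta$. Fix a deterministic $q\in[p,1]$ with $\bar Z(q)\le\tau_n/3$.

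\emph{Raw good event.} First define a raw good event $\mathcal R_q$. On it:
\begin{itemize}
\item the raw Hessian $\mathsf H_{G(q)}(0)$ satisfies $K^{-1}I\preceq\mathsf H\preceq KI$;
\item every edge load of $G(q)$ is $1+O(\sqrt{\gamma\log n})$, and every virtual completion sum is within a constant factor of its mean;
\item the squared overlap sums satisfy $\max_a\|(\mathsf H-\mathsf D)e_a\|_2^2\le K\gamma$.
\end{itemize}
I would obtain these from matrix Chernoff (Theorem~\ref{thm:Tropp-Chernoff}) applied to $\sum_Q w_Q\ones_Q\ones_Q^{\T}$, whose mean is $A_\phi\succeq\kappa I$; from polynomial concentration of the clique counts, whose scales are the extension scales $\mu_s$ for $k+1\le s\le r$; and from the squared-overlap count of Appendix~\ref{app:squared-overlap}. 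This should give $\PP(\mathcal R_q^c)\le n^{-D}$ for any fixed $D$ once $C_{\mathrm{ext}}$ is large.

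\emph{Local lemma.} The clique analogue of Lemma~\ref{lem:T12} (Section~\ref{sec:local}) is the next ingredient. Suppose $F$ satisfies the raw conditions and has an exact potential with $\|\widehat u\|_\infty\le 1$. Toggling one host edge $e$ then yields a unique exact potential whose extended difference $\delta$ satisfies
\[
 \|\delta\|_2^2\le C\gamma,\qquad \|\delta\|_\infty\le C\sqrt\gamma,\qquad \sum_e\E'_e\bigl[(\widehat u_a-\widehat u_a^{(e)})^2\bigr]\le C\gamma \ \text{ for every } a.
\]
The proof is local convexity as in Step~1 of Lemma~\ref{lem:T12}, followed by Lemma~\ref{lem:response-energy}. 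The forcing vector $f_e$ is the load that the cliques through $e$ place on the other edges. The forcing Gram matrix then has diagonal entries $O(\gamma)$ and operator norm $O(1)$; this is again the squared-overlap estimate. The virtual coordinate at $e$ itself changes by $O(\sqrt\gamma)$ by its defining formula.

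\emph{Step 1: concentration.} Concentrate $Z(q)$ by Corollary~\ref{cor:poly-tail} with $h=b\log n$. On $\mathcal R_q$ the one-sided replacement proxy is $O(\gamma)$; the case $Z(q)=\tau_n$ is handled by the reverse local-lemma argument used in the model proof. Since $\tau_n^2/(\gamma\log n)=C_{\mathrm{cap}}^2$ is large, this gives $\PP(Z(q)\ge\tau_n/2)\le n^{-b}$, which is the first claim. The same argument applied to the coordinates of the cutoff $Y(q)$, followed by a union bound over the $O(n^k)$ coordinates, gives $\max_a\Var Y_a\le C\gamma$ and $\|Y-\E Y\|_\infty\le\tau_n/32$ outside probability $n^{-D}$.

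\emph{Step 2: locating the mean.} Locate $m_Y=\E Y$ through the decomposition~\eqref{eq:mean-error-decomposition}, now for the extended gradient on $\R^{E(G)}$. The three error terms are handled as follows.
\begin{itemize}
\item The cutoff term is supported on the rare event $\{Z(q)=\tau_n\}$. The gradient there is polynomially bounded, so this term is negligible.
\item The resampling term compares the loads at $Y$ with loads in which edge $e$'s indicator is resampled, using $Y^{(e)}$, which is independent of that indicator. By the local lemma each change is $O(\sqrt\gamma)$ times the load, giving a contribution $O(\sqrt\gamma)$ in sup norm.
\item The Taylor term is controlled by the variance bound, giving $O(\gamma)$.
\end{itemize}
Hence $\|g_{\mathrm{pop}}(m_Y)\|_\infty\le C\sqrt\gamma$. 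The uniformly bounded $\infty\to\infty$ inverse of the population Hessian (Section~\ref{sec:inverse}) then gives $\|m_Y\|_\infty\le C\sqrt\gamma\le\tau_n/64$ for large $C_{\mathrm{cap}}$.

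\emph{Step 3.} Conclude exactly as in Step~3 of the model proof:
\[
 \bar Z(q)\le\|m_Y\|_\infty+\E\|Y-m_Y\|_\infty+\tau_n\PP(Z=\tau_n)\le\tau_n/8.
\]

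\emph{Main obstacle.} I expect the hardest step to be the square-sum bound in the local lemma, specifically the bound on the forcing Gram matrix $A_f=\sum_e\E'_e[f_ef_e^{\T}]$. Its diagonal and its operator norm must be bounded uniformly over the random hypergraph. This requires the squared-overlap count, which sums over pairs of cliques sharing $e$ and $a$ and then splits by intersection size. Each overlap pattern on $s$ vertices contributes $1/\mu_s\le\gamma$ by~\eqref{eq:gamma-endpoints} and~\eqref{eq:strict-density}, and all of this has to hold with polynomially small failure probability. I would first prove the bound in expectation, then upgrade it via polynomial concentration for these subgraph counts.
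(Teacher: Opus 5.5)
Your three-step outline is the paper's (Lemma~\ref{lem:cap-concentration} and Proposition~\ref{prop:mean-improvement}, built on Propositions~\ref{prop:one-edge} and~\ref{prop:square}). The gap is in the raw event underneath it, which cannot be obtained the way you propose.

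\textbf{The raw event.} Matrix Chernoff needs independent summands. The terms $q^{1-\binom{r}{k}}\phi(Q)\prod_{h\in E(Q)}I_h\,\ones_Q\ones_Q^{\T}$ are dependent whenever two cliques share an edge. Their mean is also not $A_\phi$. On $\R^{E(G)}$ it is $qA_\phi$, and there the conclusion is false, since the realized sum vanishes on absent coordinates. On the random space $\R^{E(G(q))}$ there is no fixed mean, because conditioning on the retained edges determines the Hessian.

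The paper instead compares $\mathsf H^0_q$ with $\mathsf S_q=I+q^{-1}(\Pi\Gamma_\phi\Pi)|_{\operatorname{im}\Pi}$. It proves $\mathsf S_q\succeq(\kappa/2)I$ by matrix Bernstein over the independent edge indicators, after splitting off the negative spectral part of $\Gamma_\phi$ (Lemma~\ref{lem:restriction}). It then bounds $\|\mathsf H^0_q-\mathsf S_q\|$ using random colorings into independent blocks, a telescoping exposure, and conditional rectangular Bernstein with coefficients truncated by the envelope $W\le u_*$ (Proposition~\ref{prop:weighted-concentration}).

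Your counting estimates fail near the threshold for a similar reason. When $\mu\asymp\log n$, the nonempty expected derivatives of the completion counts are only a factor $\mu$ below the target. So neither Lemma~\ref{lem:polynomial-upper-tail} nor Kim--Vu-type bounds give failure probability $n^{-D}$, either for the load upper tails or for your planned upgrade of the squared-overlap count. The paper splits at $p_*$ and uses the bounded envelope $W=O(1)$ together with disjoint completion families (Lemmas~\ref{lem:root-envelope} and~\ref{lem:root-inputs}). These inputs are the real obstacle. The forcing Gram matrix you singled out is a short consequence of Lemmas~\ref{lem:root-inputs} and~\ref{lem:root-square}.

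\textbf{Two steps that do not transfer from the model.} First, your raw event holds only at $G(q)$, but it is needed elsewhere.
\begin{itemize}
\item Local convexity of a flipped objective needs the gap at the flipped hypergraph.
\item The reverse argument for $Z(q)=\tau_n$ applies the local lemma at a lowering neighbor.
\end{itemize}
So loads, squared rows $\mathcal L_e$ and the gap are needed at every hypergraph within a bounded number of flips. In the model this is the remark that one column costs at most $\beta$. Here it is Steps~2--4 of Lemma~\ref{lem:robust}: $W(J')\le2W(J)$, the three-hole identity for $\mathcal L_e$, and a bordered-matrix bound for an added coordinate.

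Second, resampling one indicator does not decouple the resampling term. The extended load $\Lambda_e(Y)$ is punctured, so it does not involve $I_e$ at all. Each clique term carries the product $J_{Q,e}$ of $\binom{r}{k}-1$ side indicators. A potential recomputed after flipping a single side edge stays correlated with the remaining side indicators, so $\E[J_{Q,e}e^{Y(E(Q))}]\approx q^{\binom{r}{k}-1}\E e^{Y(E(Q))}$ does not follow.

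The paper resamples all of $E(Q)\setminus\{e\}$ at once, so that $Y^{[Q,e]}$ is independent of $J_{Q,e}$. Controlling $\|Y-Y^{[Q,e]}\|_\infty$ then takes $\binom{r}{k}-1$ successive applications of Proposition~\ref{prop:one-edge}, which is why robustness with $\ell_0=\binom{r}{k}$ is needed. Alternatively, you could peel the side indicators off one at a time, each time comparing with a single-flip replacement at the original sample; that also works.
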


\begin{proof}[Proof of Theorem~\ref{thm:quantitative}]
By Lemma~\ref{lem:cap-continuity}, $\bar Z$ is continuous and
$\bar Z(1)=0$. It cannot take the value $\tau_n/3$, since
Proposition~\ref{prop:bootstrap-inputs} would then give
$\bar Z(q)\le\tau_n/8$. Hence $\bar Z(q)<\tau_n/3$ throughout
$[p,1]$.

Apply the same proposition at $q=p$. Outside an event of probability
at most $n^{-b}$, we have $Z(p)<\tau_n/2$. By
Lemma~\ref{lem:cap-continuity}, a unique finite stationary potential
then exists, and
$\|u\|_\infty\le\|\widehat u\|_\infty=Z(p) <\frac{C_{\mathrm{cap}}}{2}\sqrt{\gamma_{k,r}(n,p)\log n}$.
Stationarity of $\Phi_{G_p,p}$ means precisely that the weights in
Theorem~\ref{thm:quantitative} balance every retained edge. They are
nonnegative, and positive on the retained seed support. Taking
$C'=C_{\mathrm{cap}}/2$ and the density constant $C$ as in
Section~\ref{sec:quantitative} proves the theorem.
\end{proof}

\subsection{The three inputs to cap improvement}
Proposition~\ref{prop:bootstrap-inputs} rests on three estimates.

\proofstep{Random stability.}
Lemma~\ref{lem:robust} supplies nearly balanced raw loads, a fixed
Hessian gap, and squared-overlap bounds at scale $\gamma_{k,r}$. Its local bounds survive the bounded
number of forced flips needed later. The replacement-family bounds
are needed only at the original hypergraph.

\proofstep{Squared responses.}
Proposition~\ref{prop:one-edge} constructs every replacement from an
existing small potential and bounds its displacement energy by
$O(\gamma_{k,r})$. For the displacements $\delta^{(j)}$ on the original present edges,
including the virtual value at a deleted edge, and flip probabilities $\omega_j$, put
$\mathcal V=\max_a\sum_j\omega_j(\delta_a^{(j)})^2$.
Lemma~\ref{lem:response-energy} compares all these responses using
the original Hessian. Their linear parts contribute $O(\gamma_{k,r})$;
their Taylor remainders contribute $O(\gamma_{k,r}\mathcal V)$.
Thus
\[
 \mathcal V\le C_1\gamma_{k,r}+C_2\gamma_{k,r}\mathcal V
       \quad\Longrightarrow\quad\mathcal V=O(\gamma_{k,r}).
\]
Proposition~\ref{prop:square} verifies the lemma's hypotheses and
extends the bound to virtual coordinates.

\proofstep{Concentration and the mean.}
The response bound gives concentration for the capped norm and
cutoff coordinates on the original product space. To locate the
mean, resample all side edges of one clique: the recomputed potential
is independent of their original indicator product. The resulting
population error is $O(\sqrt{\gamma_{k,r}})$, and coordinate variances bound
the Taylor error at the mean by $O(\gamma_{k,r})$.
Corollary~\ref{cor:population-inverse} then gives
$\|\E Y(q)\|_\infty=O(\sqrt{\gamma_{k,r}(n,q)})=o(\tau_n)$.
Concentration around this small mean improves the expected cap.
Section~\ref{sec:bootstrap} verifies these estimates, including the
exceptional events and the order of constants.

\section{Seeds and deterministic stability}\label{sec:structure}
We first prove Theorem~\ref{thm:balanced-seed}, which supplies
balanced spectral seeds above the dense fractional threshold.
We then establish the two deterministic stability estimates used
in the probabilistic argument: the seed gap survives random
coordinate restriction, and every boundedly tilted population
Hessian has a dimension-free inverse bound in the maximum norm.

\subsection{Seeds above the minimum-codegree threshold}
\label{sec:seed-reduction}
The proof of Theorem~\ref{thm:balanced-seed} has three steps.
We sample bounded-order induced hosts, use decompositions with one
edge deleted to prescribe nearby edge loads and obtain a local gap,
and average the resulting weightings with exactly corrected loads.
The sampling and target-weight adjustment follow the strategy used
for graphs by Delcourt, Henderson, Lesgourgues and
Postle~\cite[Section~4]{DHLP}. The additional spectral observation is
that single-edge-deletion decompositions provide a bounded
certificate for every edge-coordinate vector. No large-clique
extension hypothesis is required of the dense host.

All parameters $k,r,\varepsilon$ are fixed as $n$ tends to infinity.
No estimate for the numerical value of $\delta^*_{k,r}$ is needed
in the proof.

\begin{lemma}\label{lem:dense-sampling}
Fix $k,r$ and $\varepsilon>0$. For all sufficiently large integers
$\ell$, and then all sufficiently large $n$, the following holds
uniformly in $0\le d\le1-\varepsilon$. If
$\delta_{k-1}(G)\ge(d+\varepsilon)n$ and $S$ is a uniformly random
$\ell$-set containing any prescribed set $R$ of at most $r$ vertices,
then $\delta_{k-1}(G[S])\ge(d+\varepsilon/2)\ell$ with probability
at least $1-\eta_\ell$, where
$\eta_\ell=2\ell^{k-1}e^{-\varepsilon^2\ell/32}$.
In particular, $\eta_\ell\le[2\binom{\ell}{k}]^{-1}$ for sufficiently
large $\ell$.
\end{lemma}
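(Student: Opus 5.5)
The plan is a hypergeometric tail bound for each fixed $(k-1)$-set $T\subseteq S$, followed by a union bound over the $\binom{\ell}{k-1}\le\ell^{k-1}$ such sets. Condition on $S\supseteq R$. Write $S=R\cup S'$, where $S'$ is a uniformly random $(\ell-|R|)$-subset of $V(G)\setminus R$. Fix a $(k-1)$-set $T$ that could lie in $S$. We estimate its degree in $G[S]$ by revealing $S$ so that the vertices of $S$ outside $T\cup R$ form a uniform random subset of $V(G)\setminus(T\cup R)$ of size $m=\ell-|T\cup R|\ge\ell-r-k$. The number $X_T$ of these vertices $v$ with $T\cup\{v\}\in E(G)$ is hypergeometric. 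Its mean is at least $m\,\bigl((d+\varepsilon)n-r-k\bigr)/n\ge(d+3\varepsilon/4)\ell$ once $\ell$ is large compared with $r,k,1/\varepsilon$ and $n$ is large compared with $\ell$. Since $\deg_{G[S]}(T)\ge X_T$, it suffices to show $X_T\ge(d+\varepsilon/2)\ell$.

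For the tail, use Hoeffding's inequality for sampling without replacement, which gives the same bound as for independent sampling:
\[
\PP\bigl(X_T\le\E X_T-\tfrac{\varepsilon}{4}\ell\bigr)\le\exp\bigl(-2(\varepsilon\ell/4)^2/m\bigr)\le e^{-\varepsilon^2\ell/8}.
\]
This is stronger than the required $e^{-\varepsilon^2\ell/32}$, so there is slack for the crude deviation accounting.

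The main obstacle is that $T$ is itself random: it is a subset of $S$, and the sampling must be handled with that in mind. I would avoid this by a first-moment union bound over all $(k-1)$-sets $T\subseteq V(G)$, weighting each by $\PP(T\subseteq S)$. Conditional on $T\subseteq S$, the set $S$ is uniform among $\ell$-sets containing $T\cup R$, which is exactly the setting handled above. Then
\[
\PP(\exists\text{ bad }T)\le\sum_T\PP(T\subseteq S)\,e^{-\varepsilon^2\ell/8}=\E\bigl[\#\{T\subseteq S\}\bigr]e^{-\varepsilon^2\ell/8}\le\ell^{k-1}e^{-\varepsilon^2\ell/8}\le\eta_\ell.
\]
The prescribed $R$ is harmless here: it only removes at most $r$ vertices from the random part, and that loss is absorbed into the $\varepsilon/4$ margin. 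Uniformity in $d\le1-\varepsilon$ is automatic, since none of the constants depend on $d$.

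Finally, $\eta_\ell\le[2\binom{\ell}{k}]^{-1}$ for large $\ell$ because $2\ell^{k-1}\cdot2\ell^k e^{-\varepsilon^2\ell/32}\to0$.
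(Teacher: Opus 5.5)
Your proof is correct and essentially the paper's argument. The paper conditions on labeled sample positions instead of your inclusion-weighted union bound over $(k-1)$-sets $T$, which yields the same factor $\binom{\ell}{k-1}$; it then applies Hoeffding's without-replacement bound to get $e^{-\varepsilon^2\ell/8}$ per set, exactly as you do, with matching mean and deviation accounting.
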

\begin{proof}
List the prescribed vertices first and sample the remaining vertices without replacement into labeled positions. Fix any $k-1$ positions and condition on the actual vertices $T$ in those positions. Together with $R$, these fix a set $U$ of at most $r+k-1$ vertices. The remaining $\ell-|U|$ sampled vertices form a uniform subset of $V(G)\setminus U$.

The set $T$ has at least $(d+\varepsilon)n-|U|$ neighbors outside $U$.
The number $X$ of those neighbors in the remaining sample is hypergeometric. For $n\ge\ell$ its mean is at least $(d+\varepsilon)\ell-2|U|$, and hence at least $(d+3\varepsilon/4)\ell$ once $\ell\ge8(r+k-1)/\varepsilon$.
The codegree of $T$ in $G[S]$ is at least $X$. Hoeffding's bound for sampling without replacement~\cite[Theorems~2 and~4]{Hoeffding} therefore gives
\[
 \PP\bigl(d_{G[S]}(T)<(d+\varepsilon/2)\ell\mid U\bigr)
 \le e^{-\varepsilon^2\ell/8}.
\]
The bound is uniform in the conditioned vertices. Union-bound over the at most $\binom\ell{k-1}$ choices of positions, not over all $(k-1)$-sets in the $n$-vertex host. This proves the asserted, slightly weaker, bound. Its exponential decay dominates the fixed polynomial $\binom{\ell}{k}$.
\end{proof}

For a $k$-uniform hypergraph $J$, write $B_J$ for its edge--clique incidence matrix: the column indexed by $Q\in\cK_r(J)$ is $\one_Q$.
For a clique-weight vector $\psi$, its edge-load vector is $B_J\psi$
and its Gram matrix is $A_\psi=B_J\diag(\psi)B_J^{\T}$.
A coordinate vector in the clique space is denoted by $\mathbf e_Q$; a coordinate vector in the edge space is denoted by $\mathbf e_e$.

\begin{lemma}\label{lem:deletion-seed}
Let $J$ be a nonempty $k$-uniform hypergraph on at most $\ell$ vertices. Suppose
that $J$ and every $J-e$, $e\in E(J)$, have fractional
$K_r^{(k)}$-decompositions. Then $J$ has a fractional decomposition
$\alpha$ with $[\binom{r}{k}\binom{\ell}{r}]^{-1}\le\alpha(Q)\le1$
for every $Q\in\cK_r(J)$.

Moreover, if $0\le\eta\le[2\binom{\ell}{k}]^{-1}$ and the target
edge loads satisfy $a_e\in[1-\eta,1]$, there is a nonnegative clique
weighting $\psi$ with $B_J\psi=a$, every weight between
$[2\binom{r}{k}\binom{\ell}{r}]^{-1}$ and $1$, and
$A_\psi\succeq\kappa_\ell I$, where
$\kappa_\ell=[2\binom{r}{k}\binom{\ell}{k}\binom{\ell}{r}^{2}]^{-1}$.
\end{lemma}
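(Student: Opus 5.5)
The plan is to work entirely with the finitely many clique-weight vectors supplied by the hypotheses. Let $D=\binom{r}{k}$, let $m=|E(J)|\le\binom{\ell}{k}$, and note that $|\cK_r(J)|\le\binom{\ell}{r}$. Fix a fractional decomposition $\phi_0$ of $J$ and, for each $e\in E(J)$, a fractional decomposition $\phi_e$ of $J-e$. View $\phi_e$ as a clique weighting on $\cK_r(J)$ that vanishes on cliques containing $e$. In matrix form, $B_J\phi_0=\ones$ and $B_J\phi_e=\ones-\mathbf e_e$. Every such weighting has all weights in $[0,1]$, since the weight of a clique is at most the load on any one of its edges. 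Because $J$ is nonempty and decomposable, $\cK_r(J)\ne\emptyset$.

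\textbf{Step 1: the uniformly positive decomposition $\alpha$.} For each $Q\in\cK_r(J)$, put
\[
\beta_Q=\frac1D\Bigl(\mathbf e_Q+\sum_{e\in E(Q)}\phi_e\Bigr).
\]
Its load vector is $\frac1D\bigl(\ones_Q+D\ones-\ones_Q\bigr)=\ones$. So $\beta_Q$ is a fractional decomposition with $\beta_Q(Q)\ge1/D$, and all its weights lie in $[0,1]$. Let $\alpha$ be the average of $\beta_Q$ over $Q\in\cK_r(J)$. Then $\alpha$ is a fractional decomposition with $\alpha(Q)\le1$ and $\alpha(Q)\ge[D\,|\cK_r(J)|]^{-1}\ge[D\binom{\ell}{r}]^{-1}$.

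\textbf{Step 2: prescribed loads.} Let $s=\sum_e(1-a_e)$, so that $s\le\eta m\le1/2$. Define
\[
\psi=(1-s)\alpha+\sum_{e}(1-a_e)\phi_e.
\]
This is nonnegative, and its load vector is $(1-s)\ones+\sum_e(1-a_e)(\ones-\mathbf e_e)=\ones-\sum_e(1-a_e)\mathbf e_e=a$. Each weight is at least $\frac12\alpha(Q)\ge[2D\binom{\ell}{r}]^{-1}$. Each weight is also at most $(1-s)+s=1$, because every summand weighting is bounded by one.

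\textbf{Step 3: the spectral gap.} This is the only step needing an idea, namely the single-edge certificate. For every edge $e$, we have $\mathbf e_e=B_J(\phi_0-\phi_e)$, and the coefficients $c^{(e)}=\phi_0-\phi_e$ lie in $[-1,1]$. Hence, for any $z\in\R^{E(J)}$,
\[
z_e=\sum_Q c^{(e)}_Q\langle\ones_Q,z\rangle .
\]
Cauchy--Schwarz then gives $z_e^2\le\binom{\ell}{r}\sum_Q\langle\ones_Q,z\rangle^2$. Summing over the at most $\binom{\ell}{k}$ edges yields
\[
z^{\T}B_JB_J^{\T}z\ge\Bigl[\tbinom{\ell}{k}\tbinom{\ell}{r}\Bigr]^{-1}\|z\|_2^2 .
\]
Since $A_\psi=B_J\diag(\psi)B_J^{\T}\succeq\min_Q\psi(Q)\,B_JB_J^{\T}$, we obtain $A_\psi\succeq[2D\binom{\ell}{r}]^{-1}[\binom{\ell}{k}\binom{\ell}{r}]^{-1}I=\kappa_\ell I$. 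This is exactly the stated constant.

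I expect no real obstacle here. The only points requiring care are the trivial weight bound $\le1$ for every fractional decomposition, and the requirement $\eta m\le1/2$. The latter is what keeps the coefficient of $\alpha$ at least one half and keeps all coefficients nonnegative.
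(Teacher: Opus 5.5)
Your proof is correct and follows the paper's argument step for step: the same averaged decompositions $\frac{1}{\binom{r}{k}}\bigl(\mathbf e_Q+\sum_{e\in E(Q)}\phi_e\bigr)$, the same convex combination with coefficients $1-a_e$, and the same single-edge-deletion certificate with Cauchy--Schwarz for the gap. The only cosmetic difference is that you certify $\mathbf e_e$ by $\phi_0-\phi_e$ for an arbitrary decomposition $\phi_0$ of $J$, whereas the paper uses $\alpha-\beta_e$; both have entries in $[-1,1]$, so the resulting bound is identical.
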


\begin{proof}
Decompositions with one edge deleted provide the three ingredients:
positive clique weights, adjustments of the target loads, and
certificates for the spectral gap.

For each $e\in E(J)$, choose a fractional decomposition $\beta_e$
of $J-e$ and extend it by zero to the cliques of $J$. Then
$B_J\beta_e=\one-\mathbf e_e$, and all entries of $\beta_e$
lie in $[0,1]$. The clique family is nonempty because $J$ is
nonempty and fractionally decomposable.

For each fixed clique $Q$, the vector $\frac1{\binom{r}{k}}
 \left(\mathbf e_Q+\sum_{e\in E(Q)}\beta_e\right)$
is a fractional decomposition of $J$: its edge loads are
$\frac1{\binom{r}{k}}(\one_Q+\binom{r}{k}\one-\one_Q)=\one$.
Average these vectors over all $Q\in\cK_r(J)$ to obtain $\alpha$.
The explicit $\mathbf e_Q$ term gives
$\alpha(Q)\ge[\binom{r}{k}\,|\cK_r(J)|]^{-1}$.
The upper bound follows from nonnegativity and edge loads equal to one.

Set $\lambda_e=1-a_e$ and $\Lambda=\sum_e\lambda_e$.
Our choice of $\eta$ gives $0\le\Lambda\le1/2$.
Define
$\psi=(1-\Lambda)\alpha+\sum_{e\in E(J)}\lambda_e\beta_e$.
This is a convex combination of nonnegative vectors with entries at most one. Its edge-load vector is
$(1-\Lambda)\one+\sum_e\lambda_e(\one-\mathbf e_e)=a$.
Also $\psi\ge\alpha/2$ entrywise, proving the lower weight bound.

For each edge $e$, the signed vector $\alpha-\beta_e$ satisfies
\[
 B_J(\alpha-\beta_e)=\mathbf e_e,
 \qquad
 \norm{\alpha-\beta_e}_2^2\le |\cK_r(J)|\le\binom{\ell}{r}.
\]
For any edge vector $x$, Cauchy--Schwarz gives
$x_e^2\le\binom{\ell}{r}\,\norm{B_J^{\T}x}_2^2$.
Summing over at most $\binom{\ell}{k}$ edges yields
$B_JB_J^{\T}\succeq \frac1{\binom{\ell}{k}\binom{\ell}{r}}I$.
Since every $\psi(Q)$ is at least $[2\binom{r}{k}\binom{\ell}{r}]^{-1}$, termwise positive semidefinite comparison proves the claimed gap.
In particular, the gap comes from edge-deletion feasibility, not from a supposition that positive clique weights automatically imply full rank.
\end{proof}

\begin{proof}[Proof of Theorem~\ref{thm:balanced-seed}]
Write $d=\delta^*_{k,r}$. If $d+\varepsilon\ge1$, there is no such sufficiently large host; assume $d+\varepsilon<1$.
Choose a fixed integer $\ell\ge r$ large enough for Lemma~\ref{lem:dense-sampling}, with $\eta:=\eta_\ell\le[2\binom{\ell}{k}]^{-1}$.
Increase $\ell$ so that the definition of $d$ guarantees a fractional decomposition of every $\ell$-vertex $k$-uniform hypergraph of minimum codegree at least $(d+\varepsilon/4)\ell$, and so that $\varepsilon\ell/4\ge1$.

Let $\mathcal S$ be the family of $\ell$-sets $S$ such that
$\delta_{k-1}(G[S])\ge(d+\varepsilon/2)\ell$.
For $S\in\mathcal S$, the hypergraph $J=G[S]$ and each $J-e$ meet the threshold $(d+\varepsilon/4)\ell$: deleting one hyperedge lowers every codegree by at most one.
Thus Lemma~\ref{lem:deletion-seed} applies to every such $J$.

Set $M=\binom{n-k}{\ell-k}$, and let $M_e$ count the sets
$S\in\mathcal S$ containing $e$.
Lemma~\ref{lem:dense-sampling}, with the vertices of $e$ prescribed, gives
$(1-\eta)M\le M_e\le M$.
Choose the target for edge $e$, on every good set containing it, to be
$a_e=(1-\eta)M/M_e\in[1-\eta,1]$.
For each $S\in\mathcal S$, use the local lemma to obtain a weighting $\psi_S$ of the cliques of $G[S]$ with these target loads. Extend it by zero to all host cliques and define
\begin{equation*}
 \phi(Q)=\frac1{(1-\eta)M}\sum_{S\in\mathcal S}\psi_S(Q).
\end{equation*}
Every edge now has load exactly one, since
$\sum_{Q\ni e}\phi(Q)=M_ea_e/((1-\eta)M)=1$.
This explicit correction is necessary: an uncorrected average would have edge loads $M_e/M$, which need not be equal.

Fix a host clique $Q$. Applying the sampling lemma with its $r$ vertices prescribed shows that it belongs to at least
$(1-\eta)\binom{n-r}{\ell-r}$ good sets, and trivially to at most $\binom{n-r}{\ell-r}$.
The local weight bounds therefore give
\[
 \phi(Q)\ge
 \frac1{2\binom{r}{k}\binom{\ell}{r}}
 \frac{\binom{n-r}{\ell-r}}{\binom{n-k}{\ell-k}}
 =\frac1{2\binom{\ell}{k}\binom{n-k}{r-k}},\qquad
 \phi(Q)\le
 \frac1{1-\eta}
 \frac{\binom{n-r}{\ell-r}}{\binom{n-k}{\ell-k}}
 \le\frac{2\binom{\ell-k}{r-k}}{\binom{n-k}{r-k}}.
\]
Here we used
$\binom{n-r}{\ell-r}/\binom{n-k}{\ell-k}
 =\binom{\ell-k}{r-k}/\binom{n-k}{r-k}$.
Since $\ell,k,r$ are fixed, these are the required two-sided bounds of order $n^{k-r}$.

Finally, embed the local Gram-matrix bounds from
Lemma~\ref{lem:deletion-seed} in the full host-edge space. Let $\Pi_S$ be the diagonal projection onto $E(G[S])$.
Then
\[
 A_\phi
 \succeq\frac{\kappa_\ell}{(1-\eta)M}\sum_{S\in\mathcal S}\Pi_S
 =\kappa_\ell\,\diag\Bigl(\frac{M_e}{(1-\eta)M}:e\in E(G)\Bigr)
 \succeq\kappa_\ell I.
\]
This proves the seed gap with $\kappa=\kappa_\ell$ and completes the proof.
\end{proof}

\begin{remark}\label{rem:uniform-seed}
For the complete host, the uniform seed has a direct gap proof.
Let $d_n=\binom{n-k}{r-k}$ and set $\phi(Q)=d_n^{-1}$.
For $0\le j\le k$, let $\mathsf W_j$ have rows indexed by $j$-sets, columns indexed by $k$-sets, and entry $1$ for containment. Then
\begin{equation*}
 A_\phi=\frac1{d_n}\sum_{j=0}^k
       \binom{n-2k}{r-2k+j}\mathsf W_j^{\T}\mathsf W_j,
 \qquad
 A_\phi\succeq\frac{\binom{n-2k}{r-k}}{\binom{n-k}{r-k}}I.
\end{equation*}
Here a binomial coefficient with lower argument outside its range is zero, and $n$ is sufficiently large.
Indeed, at two $k$-sets with intersection size $t$, the first identity is Vandermonde's formula
\[
 \binom{n-2k+t}{r-2k+t}
 =\sum_{j=0}^k\binom tj\binom{n-2k}{r-2k+j}.
\]
All coefficients are nonnegative and $\mathsf W_k=I$. The final ratio is $1-O_{k,r}(n^{-1})$, so it is at least $1/2$ for large $n$. Also $d_n^{-1}\le Rn^{k-r}$ for a constant $R=R(k,r)$. Thus Theorem~\ref{thm:quantitative} applies to this seed.

\end{remark}

\subsection{The spectral gap after coordinate restriction}\label{sec:restriction}
For the remaining structural estimates, fix $G$ and $\phi$ satisfying
the seed assumptions. Seed balance gives $(A_\phi)_{ee}=1$ and
$A_\phi\ones=\binom{r}{k}\ones$. Counting completions of $e\cup f$
gives $(A_\phi)_{ef}\le R/n^{k-|e\cap f|}$ for $e\ne f$.
Indeed, if $|e\cup f|\le r$, there are at most
$n^{r-|e\cup f|}$ completions, each with weight at most $Rn^{k-r}$;
otherwise the entry is zero. Every off-diagonal entry is at most
$R/n$. Writing $A_\phi=I+\Gamma_\phi$ and using the off-diagonal
row sum $\binom{r}{k}-1$, we obtain
$\max_e\|\Gamma_\phi e_e\|_2^2\le R(\binom{r}{k}-1)/n$.

The off-diagonal comparison matrix after percolation has a factor
$p^{-1}$. We need a lower spectral bound for its random coordinate
restriction, not an operator-norm approximation by an entrywise
expectation.

\begin{lemma}\label{lem:restriction}
Let $\Gamma$ be a deterministic symmetric $N\times N$ matrix with zero diagonal and $I+\Gamma\succeq\kappa I$, where $0<\kappa\le1$. Set
$s=\max_i(\Gamma^2)_{ii}$.
Let $p\in(0,1]$, and let $\Pi$ be the diagonal projection formed from
independent Bernoulli($p$) variables. If $s>0$, then with probability at least
$1-N\exp(-\kappa^3p^2/(192s))$,
\begin{equation*}
 I_{\operatorname{im}\Pi}+p^{-1}(\Pi\Gamma \Pi)|_{\operatorname{im}\Pi}
 \succeq\frac\kappa2 I_{\operatorname{im}\Pi}.
\end{equation*}
For $s=0$ the conclusion is deterministic.
\end{lemma}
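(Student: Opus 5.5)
The exponent $\kappa^3p^2/(192s)$ has the shape of Theorem~\ref{thm:Tropp-Chernoff} for a sum over the $N$ coordinates $i$ of the selectors $\xi_i$ times positive semidefinite matrices of norm about $s/p^2$. I therefore plan to reduce the restricted inequality to a lower Chernoff bound for such a sum, with a union bound over the $N$ dimensions supplying the factor $N$. The case $s=0$ is immediate: then $\Gamma=0$, and the left side is the identity on $\operatorname{im}\Pi$, which dominates $\frac\kappa2 I$ since $\kappa\le1$.

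\textbf{Step 1: absorb the factor $p^{-1}$ into a lifted form.} Put $A=I+\Gamma\succeq\kappa I$, so that $\Gamma=A-I$. On $\operatorname{im}\Pi$ the target inequality is equivalent to
\[
 \Pi A\Pi\succeq\Bigl(1-p+\frac{\kappa p}{2}\Bigr)\Pi .
\]
I plan to prove this through the quadratic form: for $x\in\operatorname{im}\Pi$, minimise $y^{\T}Ay$ over full-space vectors $y$ that agree with $x$ on the selected coordinates, with the unselected coordinates free. By the gap, this minimum is at least $\kappa|x|^2$. Reading it through a Schur complement of $A$ turns the required bound into a statement comparing the random restriction $\Pi A\Pi$ with its mean $pA+(1-p)I$ in the positive semidefinite order. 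That comparison is what the Chernoff step below is meant to supply.

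\textbf{Step 2: write the restriction as a selector sum of positive semidefinite terms.} Let $g_i=\Gamma\mathbf e_i$, so that $\|g_i\|_2^2=(\Gamma^2)_{ii}\le s$. I will express $p^{-1}\Pi\Gamma\Pi$, after conjugating by the diagonal factor coming from Step~1, as
\[
 \sum_i\frac{\xi_i}{p}X_i ,
\]
where each $X_i$ is positive semidefinite and built from $\mathbf e_i$ and $g_i$. The construction should give $\sum_iX_i\succeq\kappa I$ on the relevant space, and $\|X_i\|\le 1+\|g_i\|_2^2/p^2$ after rescaling. Only the $g_i$ part matters at small $p$, which gives the entry scale $L\asymp s/(\kappa p^2)$ relative to a gap of order $\kappa$.

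\textbf{Step 3: apply the Chernoff bound.} Theorem~\ref{thm:Tropp-Chernoff} with deviation $1/2$ then gives
\[
 \lambda_{\min}\ge\frac{\kappa}{2}\quad\text{with probability at least}\quad 1-N\exp\Bigl(-c\,\frac{\kappa}{L}\Bigr)=1-N\exp\Bigl(-c'\,\frac{\kappa^2p^2}{s}\Bigr).
\]
A further factor of $\kappa$ in the exponent, giving the stated $\kappa^3$ and the constant $192$, would come from the normalisation used in Step~2.

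\textbf{Expected main obstacle.} The difficulty is Step~2. The form $x^{\T}\Gamma x$ restricted to $\operatorname{im}\Pi$ is a quadratic chaos $\sum_{i\ne j}\xi_i\xi_j\Gamma_{ij}x_ix_j$, which is not linear in the selectors. A linear positive semidefinite representation therefore needs a decoupling device. The first thing I would try is the following. For each selected coordinate $i$, compare $x$ with the full-space vector $x+p^{-1}(I-\Pi)g_ix_i$, which extends $x$ by the expected contribution of the unselected neighbours, and then use $A\succeq\kappa I$ on that extension. Each summand is then rank one and depends on one selector together with the deterministic vector $g_i$. If the cross terms between different coordinates $i$ do not cancel, the fallback is to split the index set randomly into two halves. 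I would then apply the argument to $\Pi_1\Gamma\Pi_2$ conditionally on $\Pi_1$, so that the form is linear in the second family of selectors, and pay a constant factor in the exponent.
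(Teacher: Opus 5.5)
There is a genuine gap. Your Step~1 reformulation is correct: $I+p^{-1}\Pi\Gamma\Pi\succeq\frac\kappa2 I$ on $\operatorname{im}\Pi$ is equivalent to $\Pi A\Pi\succeq(1-p+\frac{\kappa p}2)\Pi$. The case $s=0$ is also fine. But Step~2, which is the whole content of the lemma, is never constructed, and the devices you suggest for it do not work.

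\begin{itemize}
\item As stated, Step~2 is impossible. $p^{-1}\Pi\Gamma\Pi$, and any diagonal conjugate of it, has zero diagonal and hence zero trace. It therefore cannot be a sum $\sum_i\xi_iX_i/p$ of positive semidefinite matrices unless it vanishes.
\item The Schur-complement reading of Step~1 yields only the interlacing bound $\Pi A\Pi\succeq\kappa\Pi$. This is far from the needed $1-p(1-\kappa/2)$ when $p$ is small. The gain must come from the random restriction shrinking the \emph{negative} part of $\Gamma$ by a factor $p$. That is an upper-tail estimate for a norm, not a lower Chernoff bound for $\lambda_{\min}$.
\item The decoupling fallback fails for two reasons. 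First, the conclusion leaves only $\kappa p/2$ of slack over the leading term $p(1-\kappa)$, so that term must be reproduced with constant exactly one; decoupling inequalities lose absolute constants on the whole norm, not just in the exponent. Second, a norm bound for $\Pi_1\Gamma\Pi_2$ is two-sided, while the hypotheses do not control the positive part of $\Gamma$. For $\Gamma=c(\ones\ones^{\T}-I)$ one has $\|\Gamma\|\approx\sqrt{Ns}$, which can greatly exceed $p$ in the nontrivial regime.
\item The exponent $\kappa^3p^2/(192s)$ is asserted ("would come from the normalisation") rather than derived.
\end{itemize}

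The missing idea is a linearisation that discards the positive part and keeps the leading constant.
\begin{itemize}
\item Let $B=(-\Gamma)_+$, so that $\Gamma\succeq-B$ and $0\preceq B\preceq(1-\kappa)I$. Then $\Pi\Gamma\Pi\succeq-\Pi B\Pi$.
\item Since $XX^{\T}$ and $X^{\T}X$ have the same nonzero eigenvalues (take $X=\Pi B^{1/2}$), we get $\|\Pi B\Pi\|=\|B^{1/2}\Pi B^{1/2}\|$. Here $B^{1/2}\Pi B^{1/2}=\sum_i\xi_iv_iv_i^{\T}$ is linear in the selectors, with mean $pB$ of norm at most $p(1-\kappa)$.
\item To make the summands small, split $B=B_>+B_\le$ at the spectral level $\eta=\kappa p/4$. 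Deterministically $\Pi B_\le\Pi\preceq\eta\Pi$. Also $B_>\preceq\eta^{-1}B^2\preceq\eta^{-1}\Gamma^2$, so $\|v_i\|_2^2=(B_>)_{ii}\le s/\eta$ for $v_i=B_>^{1/2}\mathbf e_i$.
\item Apply Theorem~\ref{thm:Tropp-self} to $\sum_i(\xi_i-p)v_iv_i^{\T}$, with $L=s/\eta$, variance at most $pL$ and $t=\kappa p/4$. This gives exactly the failure probability $N\exp(-\kappa^3p^2/(192s))$.
\item Off this event, $\|\Pi B_>\Pi\|\le p(1-\kappa)+\kappa p/4$. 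Hence $I+p^{-1}\Pi\Gamma\Pi\succeq\bigl(1-(1-\kappa)-\frac\kappa4-\frac\kappa4\bigr)I=\frac\kappa2 I$ on $\operatorname{im}\Pi$.
\end{itemize}
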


\begin{proof}
Let $B=(-\Gamma)_+$ be the negative spectral part: on an eigenvector
of $\Gamma$ with eigenvalue $\lambda$, $B$ acts by
$\max\{-\lambda,0\}$. Then $0\preceq B\preceq(1-\kappa)I$ and
$\Gamma\succeq-B$. Set $\eta=\kappa p/4$ and write $B=B_>+B_\le$,
where $B_>$ retains the eigenvalues of $B$ exceeding $\eta$ and
sets the others to zero. Thus $0\preceq B_\le\preceq\eta I$.
Set $L=s/\eta$; then $(B_>)_{ii}\le\eta^{-1}(\Gamma^2)_{ii}\le L$
for every $i$.
Writing $v_i=B_>^{1/2}e_i$ and $\Pi_{ii}=\xi_i$,
\[
 B_>^{1/2}(\Pi-pI)B_>^{1/2}
 =\sum_i(\xi_i-p)v_iv_i^{\T}.
\]
These summands are independent, mean-zero and symmetric, have norm at most $L$, and have variance matrix at most
$pL\sum_i v_iv_i^{\T}=pLB_>\preceq pLI$.
Theorem~\ref{thm:Tropp-self}, with deviation $t=\kappa p/4$, gives failure probability at most
\[
 N\exp\Bigl(-\frac{t^2}{2pL+(2/3)Lt}\Bigr)
 \le N\exp\Bigl(-\frac{\kappa^3p^2}{192s}\Bigr).
\]
On its complement,
\[
 \norm{\Pi B_>\Pi}
 =\norm{B_>^{1/2}\Pi B_>^{1/2}}
 \le p(1-\kappa)+\kappa p/4.
\]
The equality uses the equality of the nonzero eigenvalues of $XX^{\T}$ and $X^{\T}X$, with $X=\Pi B_>^{1/2}$. Also $\Pi B_\le \Pi\preceq\eta \Pi$. Consequently, on $\operatorname{im}\Pi$,
\[
 I+p^{-1}\Pi\Gamma \Pi
 \succeq I-p^{-1}\Pi B_>\Pi-p^{-1}\Pi B_\le \Pi
 \succeq\frac\kappa2 I.
\]
If $B_>=0$, the argument is immediate; if $s=0$, then $\Gamma=0$.
\end{proof}

Applying Lemma~\ref{lem:restriction} with this column-norm bound
shows that its conclusion fails with probability at most
$n^k\exp(-\Omega_{\kappa,R,r,k}(np^2))$.
For $(k,r)=(2,3)$, the extension condition is
$np^2\ge C_{\mathrm{ext}}\log n$. In every other case, $a=(r-k)/(\binom{r}{k}-1)<1/2$
and $p\ge n^{-a}$, so $np^2\ge n^{1-2a}$ is polynomially large.
This failure probability is then smaller than any prescribed
negative power of $n$.

\subsection{A uniform bound on the population inverse}\label{sec:inverse}
The population mean need not be constant in a general host. To control
it, we require a maximum-norm bound for the inverse of a boundedly
tilted seed Gram matrix. The next lemma obtains this from the overlap
geometry by induction over stars indexed by subsets of an edge.

\begin{lemma}[Subset-star inverse]\label{lem:star-inverse}
Let $\lambda,L,R_0>0$ be fixed, let $\cE\subseteq\binom{[n]}{k}$,
and let $A$ be a symmetric matrix on $\R^{\cE}$. Suppose that
$A\succeq\lambda I$, every absolute row sum is at most $L$, and
$|A_{ef}|\le R_0/n^{k-|e\cap f|}$ for $e\ne f$. Then
$\norm{A^{-1}}_{\infty\to\infty} =O_{k,\lambda,L,R_0}(1)$.
\end{lemma}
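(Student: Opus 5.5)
The plan is to fix $y\in\R^{\cE}$ with $\norm{y}_\infty\le1$, put $x=A^{-1}y$, and bound $\norm{x}_\infty$ by induction over the subset stars. For $S\subseteq[n]$ with $|S|\le k$, write $\mathcal T_S=\{f\in\cE:f\supseteq S\}$ and define the normalized star energy
\[
 b_S=\Bigl(n^{-(k-|S|)}\sum_{f\in\mathcal T_S}x_f^2\Bigr)^{1/2}.
\]
At $|S|=k$ we have $\mathcal T_S\subseteq\{S\}$, so $b_S=|x_S|$ (or $b_S=0$). Hence it suffices to prove $b_S=O_{k,\lambda,L,R_0}(1)$ by induction on $j=|S|$, with constants allowed to grow with $j\le k$.

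\emph{Base case.} For $S=\emptyset$, the gap gives $\norm{x}_2\le\lambda^{-1}\norm{y}_2\le\lambda^{-1}|\cE|^{1/2}\le\lambda^{-1}n^{k/2}$, so $b_\emptyset\le\lambda^{-1}$.

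\emph{Inductive step.} Restrict the equation $Ax=y$ to the rows in $\mathcal T=\mathcal T_S$:
\[
 A_{\mathcal T\mathcal T}x_{\mathcal T}=y_{\mathcal T}-A_{\mathcal T,\mathcal T^c}x_{\mathcal T^c}.
\]
A principal submatrix inherits $A_{\mathcal T\mathcal T}\succeq\lambda I$, so
\[
 \norm{x_{\mathcal T}}_2\le\lambda^{-1}\bigl(|\mathcal T|^{1/2}+\norm{A_{\mathcal T,\mathcal T^c}x_{\mathcal T^c}}_2\bigr),
 \qquad |\mathcal T|\le n^{k-|S|}.
\]
Every $f\notin\mathcal T$ satisfies $f\cap S=W$ for some proper subset $W\subsetneq S$, and then $f\in\mathcal T_W$. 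So I split the columns of $\mathcal T^c$ by $W$; there are at most $2^k$ pieces.

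\emph{Block bound.} For each such $W$, the claim is
\begin{equation*}
 \norm{A_{\mathcal T_S,\,\{f\in\cE:\,f\cap S=W\}}}\le C\,n^{-(|S|-|W|)/2}.
\end{equation*}
Given this,
\[
 \norm{A_{\mathcal T,\mathcal T^c}x_{\mathcal T^c}}_2\le C\sum_{W\subsetneq S}n^{-(|S|-|W|)/2}\norm{x_{\mathcal T_W}}_2=C\,n^{(k-|S|)/2}\sum_{W\subsetneq S}b_W.
\]
Dividing by $n^{(k-|S|)/2}$ yields
\[
 b_S\le\lambda^{-1}\Bigl(1+C\sum_{W\subsetneq S}b_W\Bigr),
\]
and the induction closes with a bound depending only on $k,\lambda,L,R_0$.

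\emph{Proof of the block bound (main obstacle).} I expect this to be the hard step, and I would use the Schur test. Split the block further by the intersection $U=e\cap f$, which satisfies $U\cap S=W$; again there are boundedly many types. On each piece the entries are at most $R_0n^{-(k-|U|)}$. Now count, with $s=|S|$, $w=|W|$, $u=|U|$ and $|U\cap S|=w$:
\begin{itemize}
\item for a fixed row $e\supseteq S$, the columns $f$ with $e\cap f=U$ number at most $n^{k-u}$, so every row sum is $O(1)$;
\item for a fixed column $f$ with $f\cap S=W$, the rows $e\supseteq S$ with $e\cap f=U$ must contain $S\cup U$, where $|S\cup U|=s+u-w$, so there are at most $n^{k-s-u+w}$ of them. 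Each column sum is therefore $O(n^{-(s-w)})$.
\end{itemize}
The Schur test (operator norm at most the geometric mean of the maximal row and column sums) gives $O(n^{-(s-w)/2})$, as claimed. The row-sum hypothesis $L$ is not needed beyond this; the decay hypothesis $|A_{ef}|\le R_0/n^{k-|e\cap f|}$ does all the work.

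The care needed is in the bookkeeping of the types $(W,U)$. In particular $U$ may be larger than $S$, so one cannot induct on $|e\cap f|$ directly; the scheme works because it inducts on $|f\cap S|$ instead, which is always strictly smaller than $|S|$ off the star.
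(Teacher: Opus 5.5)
Your proposal is correct and follows the paper's proof essentially step for step: the same induction over subset stars with the normalization $n^{(k-|S|)/2}$, the same partition of the off-star columns according to $f\cap S$ (which places them inside lower-order stars), and the same Schur-test bound $O(n^{-(|S|-|W|)/2})$ on each block. The only difference is that you bound the block's row sums by the decay hypothesis, obtaining at most $2^kR_0$, whereas the paper uses the row-sum hypothesis $L$ there; since the principal-submatrix gap also never needs an upper bound on the diagonal, your version correctly shows that $L$ is not needed.
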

\begin{proof}
Solve $Ax=y$ and put $y_*=\norm y_\infty$. For $T\subseteq[n]$ with $|T|=t\le k$, write
$\cE_T=\{e\in\cE:T\subseteq e\}$.
We prove, by induction on $t$, that
\begin{equation*}
 \norm{x_{\cE_T}}_2\le C_t n^{(k-t)/2}y_*.
\end{equation*}
For $t=0$, use the spectral gap and $|\cE|\le n^k$.
For $t>0$, partition the complement of $\cE_T$ into
$\cF_U=\{f\in\cE:f\cap T=U\}$, indexed by proper subsets $U\subsetneq T$.
Set $s=|U|$.
For a fixed $f\in\cF_U$, an edge $e\in\cE_T$ with
$|e\cap f|=s+h$ has at most $C_kn^{k-t-h}$ choices.
Each corresponding matrix entry has absolute value at most
$R_0n^{-(k-s-h)}$. Summing over the bounded number of possible $h$ gives
$\max_{f\in\cF_U}\sum_{e\in\cE_T}|A_{ef}| \le C_kR_0n^{s-t}$.
The absolute row sums of the same block are at most $L$, so
$\norm{A_{\cE_T,\cF_U}}\le\sqrt{C_kLR_0}\,n^{(s-t)/2}$.
Because $\cF_U\subseteq\cE_U$, the induction hypothesis bounds the norm of $x_{\cF_U}$ by $C_s n^{(k-s)/2}y_*$.
The principal matrix on $\cE_T$ still has gap at least $\lambda$. Its block equation therefore gives
\[
 \norm{x_{\cE_T}}_2
 \le\lambda^{-1}\Biggl(1+\sqrt{C_kLR_0}
          \sum_{s=0}^{t-1}\binom ts C_s\Biggr)n^{(k-t)/2}y_*.
\]
This proves the induction. For $t=k$ and $T=e\in\cE$, the star contains only $e$, yielding the asserted maximum-norm bound.
\end{proof}

For $y\in\R^{E(G)}$, define the population map by
$g_\phi(y)_e=\sum_{Q\ni e}\phi(Q)e^{y(E(Q))}-1$.
Thus $g_\phi=\nabla\Phi_{G,1}$ and $g_\phi(0)=0$; its derivative
$Dg_\phi$, the \emph{population Hessian}, is the weighted Gram
matrix with weights $\phi(Q)e^{y(E(Q))}$.

\begin{corollary}\label{cor:population-inverse}
Let $G,\phi$ satisfy the seed hypotheses of
Theorem~\ref{thm:quantitative}, and fix $M>0$. If $e^{-\binom{r}{k}M}\le\vartheta_Q\le e^{\binom{r}{k}M}$
for every host clique, then the matrix
$A_{\phi,\vartheta}=\sum_Q\phi(Q)\vartheta_Q\ones_Q\ones_Q^{\T}$
satisfies $\|A_{\phi,\vartheta}^{-1}\|_{\infty\to\infty}
= O_{k,r,R,\kappa,M}(1)$. In particular, this applies to
$\int_0^1Dg_\phi(sy)\,ds$ whenever $\|y\|_\infty\le M$.
\end{corollary}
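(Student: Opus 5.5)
The plan is to deduce the corollary directly from Lemma~\ref{lem:star-inverse}, applied with $\cE=E(G)$ and $A=A_{\phi,\vartheta}$. There are three hypotheses to check, each with constants depending only on $k,r,R,\kappa,M$: a spectral lower bound $\lambda$, a row-sum bound $L$, and an entrywise overlap bound $R_0$. Put $\theta=e^{\binom{r}{k}M}$, so that $\theta^{-1}\le\vartheta_Q\le\theta$ for every host clique.

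\emph{Spectral gap.} Since $A_{\phi,\vartheta}=\sum_Q\phi(Q)\vartheta_Q\ones_Q\ones_Q^{\T}$ is a nonnegative combination of rank-one positive semidefinite matrices, termwise comparison gives $A_{\phi,\vartheta}\succeq\theta^{-1}A_\phi\succeq\theta^{-1}\kappa I$. We may therefore take $\lambda=\kappa\theta^{-1}$.

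\emph{Row sums.} The entries are nonnegative, so the absolute row sum at $e$ is $\sum_{Q\ni e}\phi(Q)\vartheta_Q\binom{r}{k}$. Seed balance gives $\sum_{Q\ni e}\phi(Q)=1$, so this is at most $\theta\binom{r}{k}=:L$.

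\emph{Overlap bound.} For $e\ne f$, the entry $(A_{\phi,\vartheta})_{ef}$ is at most $\theta(A_\phi)_{ef}$. The completion count in Section~\ref{sec:restriction} gives $(A_\phi)_{ef}\le R/n^{k-|e\cap f|}$: there are at most $n^{r-|e\cup f|}$ cliques containing $e\cup f$, each of weight at most $Rn^{k-r}$, and $|e\cup f|=2k-|e\cap f|$. The entry vanishes when $|e\cup f|>r$. Hence $R_0=\theta R$ works. Lemma~\ref{lem:star-inverse} then yields $\|A_{\phi,\vartheta}^{-1}\|_{\infty\to\infty}=O_{k,r,R,\kappa,M}(1)$.

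\emph{The integrated Hessian.} For the final sentence, note that $Dg_\phi(sy)=\sum_Q\phi(Q)e^{s\,y(E(Q))}\ones_Q\ones_Q^{\T}$. Integrating over $s\in[0,1]$ gives $A_{\phi,\vartheta}$ with $\vartheta_Q=\int_0^1e^{s\,y(E(Q))}\,ds$. Since $|y(E(Q))|\le\binom{r}{k}M$, each $\vartheta_Q$ is an average of values lying in $[\theta^{-1},\theta]$, and so lies in that interval too.

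None of these steps is a real obstacle. The only care needed is to confirm that the constants coming from the completion count do not depend on $n$ or on the host. That holds because $R$ bounds the seed weights uniformly, and the number of completions is at most $n^{r-|e\cup f|}$.
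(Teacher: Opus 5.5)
Your proposal is correct and follows the paper's proof: termwise comparison gives the gap $\kappa e^{-\binom{r}{k}M}$, the row sums are at most $\binom{r}{k}e^{\binom{r}{k}M}$, the entry bound holds with $R$ replaced by $Re^{\binom{r}{k}M}$, and then Lemma~\ref{lem:star-inverse} is applied. The paper leaves the final sentence implicit; your identification of $\int_0^1 Dg_\phi(sy)\,ds$ with $A_{\phi,\vartheta}$ for $\vartheta_Q=\int_0^1 e^{s\,y(E(Q))}\,ds$ supplies it.
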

\begin{proof}
Termwise positive semidefinite comparison gives
$A_{\phi,\vartheta}\succeq\kappa e^{-\binom{r}{k}M}I$.
Its row sums are at most $\binom{r}{k}e^{\binom{r}{k}M}$, and
the seed entry bound holds with $R$ replaced by
$Re^{\binom{r}{k}M}$. Apply Lemma~\ref{lem:star-inverse}.
\end{proof}

\section{Concentration tools}\label{sec:external}
We collect three concentration tools. The first converts replacement
square sums into concentration, allowing a rare exceptional event,
and is used in both the independent-column and clique arguments.
The matrix inequalities supply the raw spectral estimates.
Finally, an elementary upper-tail bound for nonnegative Bernoulli
polynomials provides the counting estimates used in the next section.

\subsection{Concentration with rare exceptions}
\label{sec:rare-exceptions}\label{sec:bipentropy}
\begin{lemma}[Concentration with rare exceptions]\label{lem:T13}\label{lem:rare-exception}
Let $X$ be a measurable function of independent inputs
$Z_1,\ldots,Z_N$, with $|X|\le M$ for some $M>0$. Let $X^{(j)}$ be
obtained by independently resampling input $j$. Write $a_+=\max\{a,0\}$
and let $\E'_j$ mean expectation over the fresh input with the original
sample held fixed. Put
\[
 \mathcal V_+(X)=\sum_j\E'_j[(X-X^{(j)})_+^2\mid Z_1,\ldots,Z_N].
\] Suppose $\mathcal V_+(X)\le v$ outside an event of probability at most $\delta$, and $\mathcal V_+(X)\le v_{\max}$ everywhere, where $v,v_{\max}\ge0$ and $0\le\delta\le1$. Then, for every $\lambda\ge0$,
\begin{equation}\label{eq:rare-entropy}
 \log\E e^{\lambda(X-\E X)}
 \le v\lambda^2+\frac{v_{\max}\delta\lambda}{2M}(e^{2M\lambda}-1).
\end{equation}
Also, $\Var X\le v+v_{\max}\delta$.
\end{lemma}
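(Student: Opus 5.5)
We prove Lemma~\ref{lem:rare-exception} with the entropy method in its modified log-Sobolev form. Put $F(\lambda)=\E e^{\lambda X}$ for $\lambda\ge0$. We use the standard inequality for functions of independent variables (as in Boucheron--Lugosi--Massart~\cite{BLM}):
\[
 \Ent\bigl(e^{\lambda X}\bigr)\le\sum_j\E\Bigl[e^{\lambda X}\,\psi\bigl(-\lambda(X-X^{(j)})\bigr)\Bigr],
 \qquad \psi(u)=e^u-u-1 .
\]
Since $\psi$ is small for negative arguments, only the positive parts $(X-X^{(j)})_+$ matter. The inequality
\[
 \psi(-u)\le u^2/2 \quad\text{for } u\ge0
\]
handles them, while the contributions with $X<X^{(j)}$ are absorbed by the symmetrization step, which exchanges the roles of $X$ and $X^{(j)}$. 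This yields the one-sided bound
\[
 \Ent\bigl(e^{\lambda X}\bigr)\le\lambda^2\,\E\bigl[\mathcal V_+(X)\,e^{\lambda X}\bigr].
\]
Here the inner expectation $\E'_j$ is taken conditionally on the sample, and $\mathcal V_+$ is $\sigma(Z)$-measurable. This is the step where the one-sided variance proxy enters, and I expect it to be the delicate point. To handle it, I will follow the proof of the one-sided Efron--Stein exponential inequality exactly, keeping track of the fact that we bound $(X-X^{(j)})_+$ relative to the original sample.

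Next, I split the right side over the good event and its complement $\mathcal B$, where $\PP(\mathcal B)\le\delta$. On the good event the contribution is at most $v\lambda^2F(\lambda)$. On $\mathcal B$, bound $e^{\lambda X}\le e^{\lambda M}$ and $\mathcal V_+\le v_{\max}$. Rather than use the crude prefactor $\lambda^2$ there, I keep the exact form $\E'_j\bigl[e^{\lambda X}\psi(-\lambda(X-X^{(j)}))\bigr]$ and note that $|X-X^{(j)}|\le2M$. Since $\psi(-u)/u^2$ is decreasing, this gives
\[
 \psi\bigl(-\lambda(X-X^{(j)})\bigr)\le(X-X^{(j)})_+^2\,\frac{\psi(-2M\lambda)}{4M^2}
\]
(plus a matching negative-part term). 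Altogether,
\[
 \lambda F'(\lambda)-F(\lambda)\log F(\lambda)\le v\lambda^2F(\lambda)+c(\lambda)\,v_{\max}\delta\,e^{\lambda M}.
\]

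Then I run Herbst's argument. Normalize to $X-\E X$ and divide by $\lambda^2F(\lambda)$ to get
\[
 \frac{d}{d\lambda}\,\frac{\log F(\lambda)}{\lambda}\le v+\text{error}.
\]
Use $F(\lambda)\ge e^{\lambda\E X}\ge e^{-\lambda M}$, which after centering bounds the ratio $e^{\lambda M}/F$ by $e^{2M\lambda}$. Integrating from $0$, where $\log F(\lambda)/\lambda\to0$, yields
\[
 \log\E e^{\lambda(X-\E X)}\le v\lambda^2+\lambda\int_0^\lambda(\text{error}),
\]
and the error integral evaluates to at most $\dfrac{v_{\max}\delta}{2M}\bigl(e^{2M\lambda}-1\bigr)$ after bounding the factor $c(\cdot)$. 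Since the constants must match \eqref{eq:rare-entropy} exactly, I would adjust the bound on $c(\lambda)$, using $\psi(-u)\le u^2/2$ together with $\int_0^\lambda e^{2Mt}\,dt=(e^{2M\lambda}-1)/(2M)$.

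Finally, the variance bound follows from the one-sided Efron--Stein inequality,
\[
 \Var X\le\E\,\mathcal V_+(X),
\]
and splitting over $\mathcal B$ gives
\[
 \E\,\mathcal V_+(X)\le v+v_{\max}\,\delta .
\]
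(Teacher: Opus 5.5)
Your overall route matches the paper's. You reach the symmetrized entropy bound $\Ent(e^{\lambda X})\le\lambda^2\E[e^{\lambda X}\mathcal V_+(X)]$, then split over the exceptional event, use $\E e^{\lambda X}\ge e^{-\lambda M}$, run Herbst's integration, and finish with one-sided Efron--Stein for the variance. The paper obtains the entropy bound from $\Ent_j(Z)\le\operatorname{Cov}_j(Z,\log Z)$, pair symmetrization and $1-e^{-t}\le t$. After symmetrization, your modified log-Sobolev route gives the same integrand $e^{\lambda X}\lambda(X-X^{(j)})\bigl(1-e^{-\lambda(X-X^{(j)})}\bigr)$ on $\{X>X^{(j)}\}$.

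The step that fails is your proposed refinement on the exceptional event $\mathcal B$, for two reasons.
\begin{itemize}
\item The monotonicity runs the wrong way. Since $u\mapsto\psi(-u)/u^2$ is decreasing, for $0\le u\le2M\lambda$ it gives the \emph{lower} bound $\psi(-u)\ge u^2\psi(-2M\lambda)/(2M\lambda)^2$. For instance, with $M=\lambda=1$ and $X-X^{(j)}=0.1$ one has $\psi(-0.1)\approx0.0048$, while your right-hand side is $0.01\,\psi(-2)/4\approx0.0028$. The only upper bounds available are $\psi(-u)\le u^2/2$ and, after symmetrization, $u(1-e^{-u})\le u^2$. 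The boundedness $|X-X^{(j)}|\le2M$ does not improve them, because the supremum of each ratio is approached as $u\to0$.
\item If you return to the unsymmetrized integrand on $\mathcal B$, the ``matching negative-part term'' $\psi\bigl(\lambda(X^{(j)}-X)\bigr)$ is not controlled by $\mathcal V_+$ at all. You also cannot symmetrize inside $\mathcal B$: exchanging $X$ and $X^{(j)}$ does not preserve an event defined by the original sample.
\end{itemize}

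The refinement is also unnecessary. Split the already symmetrized bound, whose integrand $e^{\lambda X}\mathcal V_+(X)$ is a function of the original sample:
\[
\E[e^{\lambda X}\mathcal V_+(X)]\le v\,\E e^{\lambda X}+v_{\max}\delta e^{\lambda M}.
\]
Divide by $\E e^{\lambda X}\ge e^{-\lambda M}$ and write $\Psi(\lambda)=\log\E e^{\lambda(X-\E X)}$. This gives $\lambda\Psi'(\lambda)-\Psi(\lambda)\le\lambda^2(v+v_{\max}\delta e^{2M\lambda})$, that is, $(\Psi/\lambda)'\le v+v_{\max}\delta e^{2M\lambda}$. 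Integrating from $0$, where $\Psi(\lambda)/\lambda\to0$, and using $\int_0^\lambda e^{2Mt}\,dt=(e^{2M\lambda}-1)/(2M)$, yields exactly \eqref{eq:rare-entropy}.

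So the ``crude'' prefactor $\lambda^2$ on $\mathcal B$ is precisely what produces the constant $\frac{v_{\max}\delta\lambda}{2M}(e^{2M\lambda}-1)$, and no adjustment of $c(\lambda)$ is required. With this change, your proof coincides with the paper's.
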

\begin{proof}
For a nonnegative random variable $Z$, write
$\Ent(Z)=\E[Z\log Z]-\E Z\log\E Z$, and let $\Ent_j(Z)$
denote its entropy with all inputs except the $j$th held fixed.
Entropy tensorization~\cite[Theorem~4.22]{BLM} gives
$\Ent(Z)\le\sum_j\E\Ent_j(Z)$ for independent inputs.
We also use the resampling form of the Efron--Stein inequality,
$\Var X\le\frac12\sum_j\E(X-X^{(j)})^2$;
see~\cite[Theorem~3.1]{BLM}.

Apply entropy tensorization to $Z=e^{\lambda X}$. For one coordinate, let $Z,Z'$ be independent copies under its conditional law. Jensen's inequality and pair symmetrization give
\[
 \Ent_j(Z)\le\operatorname{Cov}_j(Z,\log Z)
 =\frac12\E_{j,j'}[(Z-Z')(\log Z-\log Z')],
\]
where the last expectation is over both conditional copies. Apply this with $Z=e^{\lambda X}$ and $Z'=e^{\lambda X'}$. Exchanging the copies and using $1-e^{-t}\le t$ for $t\ge0$ yields
\[
 \Ent(e^{\lambda X})\le\lambda^2\E[e^{\lambda X}\mathcal V_+(X)].
\]
Splitting the exceptional event and using $X\in[-M,M]$ gives
\[
 \frac{\E[e^{\lambda X}\mathcal V_+(X)]}{\E e^{\lambda X}}
 \le v+v_{\max}\delta e^{2M\lambda}.
\]
For $\psi(\lambda)=\log\E e^{\lambda(X-\E X)}$, this is
$\lambda\psi'(\lambda)-\psi(\lambda)
\le\lambda^2(v+v_{\max}\delta e^{2M\lambda})$.
Integrating $(\psi(\lambda)/\lambda)'$ from zero to $\lambda$ proves \eqref{eq:rare-entropy}.

By the Efron--Stein inequality recalled above, $\Var X\le\frac12\sum_j\E(X-X^{(j)})^2$. Exchangeability of $(X,X^{(j)})$ makes the squared expectations of the positive and negative parts of $X-X^{(j)}$ equal. Hence
$\Var X\le\E \mathcal V_+(X)\le v+v_{\max}\delta$,
which is the second estimate.
\end{proof}

The following consequence packages the tail estimates used in both
proofs. We take $h=s$ for the column model and $h=\log n$ for cliques.

\begin{corollary}\label{cor:poly-tail}
Fix constants $a,z,D,C_0>0$. There are $D_0,\epsilon_0>0$,
depending only on these constants, such that the following holds
for every $h\ge1$. Under the hypotheses of
Lemma~\ref{lem:rare-exception}, if $|X|\le1$,
$v_{\max}\le C_0e^{ah}$, $\delta\le e^{-D_0h}$ and
$vh\le\epsilon_0$, then $\PP(X-\E X\ge z)\le e^{-Dh}$.
\end{corollary}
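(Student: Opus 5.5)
The plan is to apply the moment-generating bound \eqref{eq:rare-entropy} of Lemma~\ref{lem:rare-exception} with $M=1$ and then finish with the Chernoff bound. For every $\lambda\ge0$,
\[
 \PP(X-\E X\ge z)\le\exp\Bigl(-\lambda z+v\lambda^2+\tfrac{v_{\max}\delta\lambda}{2}(e^{2\lambda}-1)\Bigr).
\]
Set $\lambda=\theta h$, where the constant $\theta=\theta(z,D)$ is chosen so that $\theta z\ge D+2$; for instance $\theta=(D+2)/z$. The linear term then contributes $-\theta z h\le-(D+2)h$.

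The quadratic term satisfies $v\lambda^2=\theta^2(vh)h\le\theta^2\epsilon_0h$. Choose $\epsilon_0$ small enough that $\theta^2\epsilon_0\le1/2$; the quadratic term is then at most $h/2$.

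For the exceptional term, use $e^{2\lambda}-1\le e^{2\theta h}$ and $\lambda=\theta h\le\theta e^{h}$ (valid since $h\ge1$). Together with $v_{\max}\le C_0e^{ah}$ and $\delta\le e^{-D_0h}$, this gives
\[
 \tfrac{v_{\max}\delta\lambda}{2}(e^{2\lambda}-1)\le\tfrac{C_0\theta}{2}\,e^{(a+1+2\theta-D_0)h}.
\]
Choose $D_0\ge a+1+2\theta+\log(1+C_0\theta)$. The exponent is then at most $-\log(1+C_0\theta)\,h\le-\log(1+C_0\theta)$, so the whole term is bounded by the absolute constant $1/2$.

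Combining the three bounds, the exponent is at most $-(D+2)h+h/2+1/2\le-(D+1)h+1/2\le-Dh$, using $h\ge1$. This gives $\PP(X-\E X\ge z)\le e^{-Dh}$. The constants $\theta$, $\epsilon_0$ and $D_0$ depend only on $a,z,D,C_0$, as required.

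The only real care is the order of choices. First fix $\theta$ from $z$ and $D$. Then take $\epsilon_0$ small in terms of $\theta$. Finally take $D_0$ large in terms of $a$, $\theta$ and $C_0$, so that the exponential growth $e^{2\lambda}$ from the exceptional event is beaten by $\delta$. No step is a genuine obstacle; the main point is that $\lambda$ must scale linearly with $h$, so the rare-event term requires $D_0$ to grow with $\theta$.
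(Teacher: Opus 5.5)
Your proof is correct and follows the paper's argument: apply \eqref{eq:rare-entropy} with $M=1$, take $\lambda$ linear in $h$, absorb the quadratic term using $vh\le\epsilon_0$, choose $D_0$ large so that the exceptional term is $O(1)$, and conclude by Chernoff. The differences are cosmetic: the paper takes $\lambda=2(D+1)h/z$, bounds $v\lambda^2\le\lambda z/2$, and chooses $D_0$ implicitly, where you give an explicit formula.
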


\begin{proof}
Take $\lambda=Lh$ with $L=2(D+1)/z$ and
$\epsilon_0\le z/(2L)$. Then $v\lambda^2\le\lambda z/2$.
Choose $D_0$ large enough that
$(C_0Lh/2)e^{-(D_0-a-2L)h}\le1$ for every $h\ge1$.
The exceptional term in \eqref{eq:rare-entropy}, applied with $M=1$,
is then at most one. Chernoff's argument gives
$\PP(X-\E X\ge z)\le e^{-(D+1)h+1}\le e^{-Dh}$.
\end{proof}

\subsection{Matrix concentration}
For a rectangular real matrix $Z$, we also write $Z^*$ for its
transpose, as in the statements of the matrix concentration results.
We state first the lower-tail matrix Chernoff inequality, used to
establish the raw gap in the model. The form below follows from
\cite[Corollary~5.2 and Remark~5.3]{Tropp}.

\begin{theorem}[Matrix Chernoff]\label{thm:Tropp-Chernoff}
Let $X_1,\ldots,X_N$ be independent symmetric $d\times d$ random matrices with $0\preceq X_j\preceq LI$ almost surely. If $\sum_j\E X_j\succeq m_0I$ with $m_0>0$, then, for $0<\varepsilon<1$,
\[
 \PP\biggl(\lambda_{\min}\Big(\sum_jX_j\Big)
       \le(1-\varepsilon)m_0\biggr)
 \le d\exp\Bigl(-\frac{\varepsilon^2m_0}{2L}\Bigr).
\]
\end{theorem}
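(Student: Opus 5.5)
This inequality is quoted from Tropp's monograph (Corollary~5.2 and Remark~5.3), so the plan is to reproduce the standard matrix Laplace-transform argument rather than give a new proof. By homogeneity we divide every $X_j$ by $L$ and assume $L=1$; the parameter $m_0$ becomes $m_0/L$, and the target bound becomes $d\exp(-\varepsilon^2m_0/2)$.

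\emph{Step 1 (Laplace transform).} For $\theta>0$, apply the lower-tail matrix Laplace transform bound. This rests on Lieb's concavity theorem, through subadditivity of matrix cumulant generating functions for independent summands, and gives
\[
 \PP\Bigl(\lambda_{\min}\Bigl(\sum_jX_j\Bigr)\le t\Bigr)
 \le e^{\theta t}\,\operatorname{tr}\exp\Bigl(\sum_j\log\E e^{-\theta X_j}\Bigr).
\]

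\emph{Step 2 (one summand).} Since $0\preceq X_j\preceq I$, convexity of $x\mapsto e^{-\theta x}$ on $[0,1]$ gives the operator inequality $e^{-\theta X_j}\preceq I-(1-e^{-\theta})X_j$. Taking expectations and using operator monotonicity of the logarithm together with $\log(1+u)\le u$, we get
\[
 \log\E e^{-\theta X_j}\preceq-(1-e^{-\theta})\E X_j.
\]
Summing over $j$ and using $\sum_j\E X_j\succeq m_0I$, the trace is at most $d\,e^{-(1-e^{-\theta})m_0}$. This step uses monotonicity of $\operatorname{tr}\exp$.

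\emph{Step 3 (optimization).} Set $t=(1-\varepsilon)m_0$ and choose $\theta=-\log(1-\varepsilon)$. This yields the classical bound
\[
 d\Bigl[\frac{e^{-\varepsilon}}{(1-\varepsilon)^{1-\varepsilon}}\Bigr]^{m_0}.
\]
The elementary inequality $-\varepsilon-(1-\varepsilon)\log(1-\varepsilon)\le-\varepsilon^2/2$ for $0<\varepsilon<1$ then gives the stated exponent. To verify this inequality, note that the left side has Taylor expansion $-\sum_{k\ge2}\varepsilon^k/(k(k-1))$.

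The only nontrivial ingredient is Lieb's theorem in Step~1. We would cite it from Tropp rather than reprove it, so there is no real obstacle beyond tracking the rescaling by $L$.
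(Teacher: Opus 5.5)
Your argument is correct and is essentially the same as the source the paper cites. The paper gives no proof of its own beyond citing Tropp (Corollary~5.2 and Remark~5.3), and your steps reproduce that proof: the Lieb-based lower-tail Laplace transform bound, the convexity estimate $\E e^{-\theta X_j}\preceq I-(1-e^{-\theta})\E X_j$ followed by $\log(I+A)\preceq A$ and monotonicity of $\operatorname{tr}\exp$, the choice $\theta=-\log(1-\varepsilon)$, and the inequality $e^{-\varepsilon}(1-\varepsilon)^{-(1-\varepsilon)}\le e^{-\varepsilon^2/2}$.
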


The following is the symmetric matrix Bernstein inequality in \cite[Theorem~1.4]{Tropp}.

\begin{theorem}[Symmetric matrix Bernstein]\label{thm:Tropp-self}
Let $X_1,\ldots,X_N$ be independent symmetric $d\times d$ random matrices with $\E X_i=0$ and $\lambda_{\max}(X_i)\le L$ almost surely.  Put
$\sigma^2=\left\|\sum_i\E X_i^2\right\|$.
Then for every $t\ge0$,
\[
 \PP\biggl(\lambda_{\max}\Bigl(\sum_iX_i\Bigr)\ge t\biggr)
 \le d\exp\Bigl(-\frac{t^2}{2\sigma^2+\frac23Lt}\Bigr).
\]
\end{theorem}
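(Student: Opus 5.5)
The plan is the matrix Laplace transform method, as in Tropp's derivation of \cite[Theorem~1.4]{Tropp}. Put $Y=\sum_iX_i$ and fix $\theta>0$.

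\textbf{Step 1: Laplace transform.} Since $e^{\theta\lambda_{\max}(Y)}=\lambda_{\max}(e^{\theta Y})\le\operatorname{tr}e^{\theta Y}$, Markov's inequality gives
\[
 \PP\bigl(\lambda_{\max}(Y)\ge t\bigr)\le e^{-\theta t}\,\E\operatorname{tr}e^{\theta Y}.
\]

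\textbf{Step 2: subadditivity of the cumulant.} By Lieb's concavity theorem, the map $A\mapsto\operatorname{tr}\exp(H+\log A)$ is concave on positive definite matrices. I will apply this conditionally on all summands but one, use Jensen's inequality, and iterate over the summands. This yields
\[
 \E\operatorname{tr}e^{\theta Y}\le\operatorname{tr}\exp\Bigl(\sum_i\log\E e^{\theta X_i}\Bigr).
\]
I expect this to be the main obstacle. Independence alone does not factor a matrix exponential, because the summands do not commute, and Lieb's theorem is the nontrivial input that replaces the scalar identity $\E e^{\theta\sum X_i}=\prod\E e^{\theta X_i}$.

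\textbf{Step 3: one-term mgf bound.} The scalar function $f(x)=(e^{\theta x}-\theta x-1)/x^2$, extended by $f(0)=\theta^2/2$, is increasing. Since $X_i\preceq LI$, the transfer rule gives
\[
 e^{\theta X_i}\preceq I+\theta X_i+f(L)X_i^2 .
\]
Taking expectations and using $\E X_i=0$, we get $\E e^{\theta X_i}\preceq I+g(\theta)\E X_i^2\preceq\exp(g(\theta)\E X_i^2)$, where $g(\theta)=(e^{\theta L}-\theta L-1)/L^2$. The logarithm is operator monotone, so $\log\E e^{\theta X_i}\preceq g(\theta)\E X_i^2$. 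The trace exponential is monotone in the semidefinite order, so
\[
 \operatorname{tr}\exp\Bigl(g(\theta)\sum_i\E X_i^2\Bigr)\le d\,e^{g(\theta)\sigma^2}.
\]

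\textbf{Step 4: optimize.} For $0<\theta<3/L$, the power-series comparison $k!\ge2\cdot3^{k-2}$ gives $g(\theta)\le\frac{\theta^2/2}{1-\theta L/3}$. Choosing $\theta=t/(\sigma^2+Lt/3)$, which lies in this range, the exponent $-\theta t+g(\theta)\sigma^2$ becomes at most
\[
 -\frac{t^2}{2(\sigma^2+Lt/3)}=-\frac{t^2}{2\sigma^2+\frac23Lt}.
\]
This is the claimed bound. The case $t=0$ is trivial, since the right-hand side is then $d\ge1$.
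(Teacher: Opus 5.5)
Your proof is correct: it is the standard Laplace-transform argument behind \cite[Theorem~1.4]{Tropp}, namely Lieb-based subadditivity of the matrix cumulant, the Bernstein mgf bound via the transfer rule, and the choice $\theta=t/(\sigma^2+Lt/3)$. The paper only cites Tropp for this statement, so your argument matches the route it relies on. One loose end: if $\sigma^2=0$, your $\theta$ equals $3/L$, which lies outside the range $(0,3/L)$ and is undefined when $L=0$; but then every $X_i$ vanishes almost surely, so the bound holds trivially.
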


The rectangular form below is \cite[Theorem~1.6]{Tropp}.

\begin{theorem}[Rectangular matrix Bernstein]\label{thm:Tropp-rect}
Let $Z_1,\ldots,Z_N$ be independent random $d_1\times d_2$ matrices with $\E Z_i=0$ and $\|Z_i\|\le L$ almost surely.  Define
$\sigma^2=\max\{\|\sum_i\E Z_iZ_i^*\|,\|\sum_i\E Z_i^*Z_i\|\}$.
Then for every $t\ge0$,
\[
 \PP\biggl(\Bigl\|\sum_iZ_i\Bigr\|\ge t\biggr)
 \le(d_1+d_2)\exp\Bigl(-\frac{t^2}{2\sigma^2+\frac23Lt}\Bigr).
\]
Consequently, there is an absolute constant $C>0$ such that, for every
$q\ge\max\{1,\log(d_1+d_2)\}$,
\[
 \biggl(\E\Bigl\|\sum_iZ_i\Bigr\|^q\biggr)^{1/q}
 \le C\bigl(\sqrt{q\sigma^2}+qL\bigr).
\]
\end{theorem}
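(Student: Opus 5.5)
The displayed tail inequality is exactly \cite[Theorem~1.6]{Tropp}, so only the moment consequence needs an argument. I will obtain it by integrating the tail bound, with no splitting or truncation needed. Write $S=\sum_iZ_i$ and $D=d_1+d_2$.

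The first step is to split the Bernstein denominator into its two regimes. Since $2\sigma^2+\frac23Lt\le2\max\{2\sigma^2,\frac23Lt\}$, the exponent satisfies
\[
 \frac{t^2}{2\sigma^2+\frac23Lt}\ge\min\Bigl\{\frac{t^2}{4\sigma^2},\frac{3t}{4L}\Bigr\}.
\]
Hence, for every $t\ge0$,
\[
 \PP(\|S\|\ge t)\le D\bigl(e^{-t^2/(4\sigma^2)}+e^{-3t/(4L)}\bigr).
\]

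Next, use the layer-cake formula $\E\|S\|^q=\int_0^\infty qt^{q-1}\PP(\|S\|\ge t)\,dt$. The two Gamma integrals evaluate exactly:
\[
 \int_0^\infty qt^{q-1}e^{-t^2/(4\sigma^2)}\,dt=(2\sigma)^q\,\Gamma\bigl(\tfrac q2+1\bigr),
 \qquad
 \int_0^\infty qt^{q-1}e^{-3t/(4L)}\,dt=\Bigl(\tfrac{4L}3\Bigr)^q\Gamma(q+1).
\]
This gives
\[
 \E\|S\|^q\le D\Bigl[(2\sigma)^q\Gamma\bigl(\tfrac q2+1\bigr)+\Bigl(\tfrac{4L}3\Bigr)^q\Gamma(q+1)\Bigr].
\]

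Then take $q$-th roots, using three elementary facts:
\begin{itemize}
\item $(a+b)^{1/q}\le a^{1/q}+b^{1/q}$ for $a,b\ge0$;
\item $\Gamma(x+1)\le x^x$ for $x\ge1$, while $\Gamma(x+1)\le1$ for $x\in[\tfrac12,1]$, so $\Gamma(\frac q2+1)^{1/q}\le\sqrt q$ and $\Gamma(q+1)^{1/q}\le q$;
\item the hypothesis $q\ge\log D$ gives $D^{1/q}\le e$, which is what absorbs the dimensional factor.
\end{itemize}
Together these yield
\[
 \bigl(\E\|S\|^q\bigr)^{1/q}\le e\Bigl(2\sqrt{q\sigma^2}+\tfrac43qL\Bigr),
\]
so the claim holds with the absolute constant $C=4e$. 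The hypothesis $q\ge1$ is used so that the root inequality and the Gamma estimates apply.

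There is no genuine obstacle here. The only point requiring care is that the dimension factor $D$ is absorbed only because $q\ge\log D$. This is precisely why the moment bound is stated for $q\ge\max\{1,\log(d_1+d_2)\}$ and not for all $q\ge1$.
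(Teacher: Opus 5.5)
Your proposal is correct and follows the same route as the paper. Like the paper, you cite the tail bound from Tropp and obtain the moment bound by integrating it via $\E\|S\|^q=\int_0^\infty qt^{q-1}\PP(\|S\|\ge t)\,dt$, with the subgaussian and exponential regimes giving the two terms and $q\ge\log(d_1+d_2)$ absorbing the factor $d_1+d_2$. The only cosmetic difference is that you dominate the tail by $e^{-t^2/(4\sigma^2)}+e^{-3t/(4L)}$ for every $t$ instead of splitting the integral at $t=\sigma^2/L$. This yields exact Gamma integrals and an explicit constant; $C=2e$ would already suffice.
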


The moment estimate in Theorem~\ref{thm:Tropp-rect} follows from its tail bound by integration: for $W=\|\sum_iZ_i\|$, use $\E W^q=q\int_0^\infty t^{q-1}\PP(W\ge t)\,dt$, splitting at $t=\sigma^2/L$.  The subgaussian and exponential parts of the Bernstein tail give the two terms displayed above, and $q\ge\max\{1,\log(d_1+d_2)\}$ absorbs the dimensional prefactor.

\subsection{Upper tails for nonnegative polynomials}
For the counting estimates below, we need only an upper-tail bound
for nonnegative multilinear Bernoulli polynomials whose nonempty
expected derivatives are polynomially smaller than the target scale.
%This follows from the stronger inequalities of Schudy andSviridenko, but a direct moment argument suffices.

\begin{lemma}\label{lem:polynomial-upper-tail}
Fix an integer $d\ge1$ and constants $C_0,\delta>0$.
There are $c>0$, depending only on $d,C_0$, and $n_0$, depending
only on $d,C_0,\delta$, such that the following holds for every
$n\ge n_0$ and $N\ge1$. Let $Z=Z(X_1,\ldots,X_N)$ be a multilinear
polynomial of degree at most $d$, with nonnegative coefficients, in
independent Bernoulli variables with arbitrary parameters. Write
$\partial_T$ for the mixed derivative with respect to the variables
indexed by $T\subseteq[N]$. If $M>0$, $\E Z\le C_0M$, and
\[
 \E[\partial_T Z]\le C_0Mn^{-\delta}
 \qquad\text{whenever}\qquad 1\le|T|\le d,
\]
then
\[
 \PP\bigl(Z\ge(C_0+1)M\bigr)
 \le \exp\bigl(-c n^{\delta/d}\bigr).
\]
\end{lemma}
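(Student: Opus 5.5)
We bound a high moment of $Z$ and apply Markov's inequality. Take an even integer $m$, to be fixed later of order $n^{\delta/d}$. Write $Z=\sum_{S}c_S X_S$, where $X_S=\prod_{i\in S}X_i$, $c_S\ge0$, and $|S|\le d$. Then
\[
 \E Z^m=\sum_{S_1,\ldots,S_m}c_{S_1}\cdots c_{S_m}\,\E X_{S_1\cup\cdots\cup S_m}.
\]
The monomials are added one at a time, and each is classified by its overlap $T_j=S_j\cap(S_1\cup\cdots\cup S_{j-1})$ with the variables already chosen.

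The main step is to compare, for each $j$, the contribution of the $j$-th monomial with a derivative expectation. Fix the earlier monomials and the overlap set $T=T_j$. Summing $c_{S_j}\prod_{i\in S_j\setminus T}p_i$ over all $S_j\supseteq T$ gives exactly $\E[\partial_T Z]$, because $Z$ is multilinear with nonnegative coefficients. If $T=\emptyset$, this sum is $\E Z\le C_0M$. If $T\ne\emptyset$, it is at most $C_0Mn^{-\delta}$. The number of possible overlap sets $T$ is at most $\binom{dm}{\le d}\le (dm+1)^d$, since $T$ lies in a union of at most $d(j-1)$ variables. Summing over $j$ in order (an inductive peeling from the last monomial) therefore gives
\[
 \E Z^m\le\prod_{j=1}^m\Bigl(C_0M+(dm+1)^dC_0Mn^{-\delta}\Bigr)
 =(C_0M)^m\bigl(1+(dm+1)^dn^{-\delta}\bigr)^m .
\]
The peeling is valid because the bound for the $j$-th factor is uniform in the earlier choices. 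The point requiring care is that the overlap variables already carry their probability factors from the earlier monomials, so only the new variables $S_j\setminus T$ contribute fresh factors $p_i$. This is precisely the derivative expectation, since $X_i^2=X_i$.

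Now choose $m=\lfloor c'n^{\delta/d}\rfloor$ with $c'$ small in terms of $d$. Then $(dm+1)^dn^{-\delta}\le(2dc')^d$, which is a small constant $\theta$. Markov's inequality gives
\[
 \PP\bigl(Z\ge(C_0+1)M\bigr)\le\Bigl(\frac{C_0(1+\theta)}{C_0+1}\Bigr)^m .
\]
Choose $\theta$ so that $C_0(1+\theta)\le C_0+\tfrac12$, which requires $c'$ to depend on $C_0$ as well. The ratio is then at most $1-1/(2C_0+2)$, so the probability is at most $\exp(-m/(2C_0+2))\le\exp(-cn^{\delta/d})$ once $n\ge n_0(d,C_0,\delta)$, where $n_0$ is large enough that $m\ge1$ and the rounding of $m$ is harmless.

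I expect the main obstacle to be the bookkeeping in the peeling step. One must make sure that overlaps are charged to $\partial_T$ with the correct probability factors, and that the count of overlap patterns is polynomial in $m$ and does not grow with $N$. Both hold because $T$ is a subset of at most $d$ elements of a set of size at most $dm$, and the sum over $S_j\supseteq T$ is absorbed into $\E[\partial_TZ]$, independently of $N$.
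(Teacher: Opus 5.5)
Your proposal is correct and follows essentially the same route as the paper. Both arguments peel off the last monomial in the expansion of $\E Z^m$, bound its contribution given the earlier union $U$ (with $|U|\le d(m-1)$) by $\sum_{T\subseteq U,\,|T|\le d}\E[\partial_T Z]$ uniformly in $U$, iterate to $\E Z^m\le\bigl(C_0M+\mathrm{poly}(m)\,C_0Mn^{-\delta}\bigr)^m$, and take $m\asymp n^{\delta/d}$ in Markov's inequality. The differences are cosmetic: the paper uses the exact identity $F(U)=\sum_{T\subseteq U}\prod_{i\in T}(1-p_i)\E[\partial_T Z]$ with the count $\binom{|U|}{j}$, whereas you use an inequality over exact overlaps and the cruder count $(dm+1)^d$, and requiring $m$ even is unnecessary since $Z\ge0$.
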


\begin{proof}
Write $Z=\sum_{S\subseteq[N],\,|S|\le d}a_S X_S$, where $a_S\ge0$
and $X_S=\prod_{i\in S}X_i$. Put $p_i=\E X_i$ and
$p_S=\prod_{i\in S}p_i$, with both empty products equal to one.
Set $M_0=\E Z$ and, for $1\le j\le d$, let
$M_j=\max_{|T|=j}\E[\partial_T Z]$, with an empty maximum interpreted
as zero. We first prove the moment bound
\[
 \E Z^h\le
 \Bigl(M_0+\sum_{j=1}^{d}(dh)^jM_j\Bigr)^h
 \qquad \text{for integers } h\ge 1.
\]

For $U\subseteq[N]$, let $X^U$ be obtained from $X$ by replacing the
coordinates in $U$ by one, and put $F(U)=\E Z(X^U)$.
Multilinearity gives
\[
 F(U)=
 \sum_{\substack{T\subseteq U\\|T|\le d}}
       \Bigl(\prod_{i\in T}(1-p_i)\Bigr)\E[\partial_T Z]
 \le M_0+\sum_{j=1}^{d}\binom{|U|}{j}M_j.
\]
The empty derivative is $Z$. To check the identity, expand
$1=p_i+(1-p_i)$ in each monomial for the indices in $S\cap U$;
the resulting terms are precisely those in the displayed sum.
This argument also permits parameters $p_i=0$ or $p_i=1$.

For integers $h\ge2$, expand $\E Z^h$ over ordered lists of monomial supports,
allowing repetitions. Fix the first $h-1$ supports and let
$U=S_1\cup\cdots\cup S_{h-1}$.
Independence and the identity $X_i^a=X_i$ for every positive integer
$a$ show that the sum over the last support is
\[
 \sum_{S_h}a_{S_h}p_{U\cup S_h}
 =p_U\sum_{S_h}a_{S_h}p_{S_h\setminus U}
 =p_UF(U).
\]
Since $|U|\le d(h-1)$, this last factor is bounded by $p_UB_h$,
where $B_h=M_0+\sum_{j=1}^{d}(dh)^jM_j$.
All coefficients are nonnegative, so summing over the first supports
gives $\E Z^h\le B_h\E Z^{h-1}$. Since $\E Z=M_0\le B_1$
and $B_h$ is nondecreasing in $h$, iteration proves the moment bound.
No division by $p_U$ is used.

Choose a constant $c_0>0$, depending only on $d,C_0$, so that
$C_0d^{d+1}c_0^d\le1/2$, and take
$h=\lfloor c_0n^{\delta/d}\rfloor$.
For sufficiently large $n$, we have $h\ge1$ and
\[
 \sum_{j=1}^{d}(dh)^jM_j
 \le C_0Mn^{-\delta}d(dh)^d
 \le M/2.
\]
The moment bound and $M_0\le C_0M$ now give
$\E Z^h\le[(C_0+1/2)M]^h$.
Markov's inequality~\cite[equation~(2.1)]{BLM} therefore yields
\[
 \PP\bigl(Z\ge(C_0+1)M\bigr)
 \le\Bigl(\frac{C_0+1/2}{C_0+1}\Bigr)^h
 \le\exp(-c n^{\delta/d}),
\]
where $c>0$ depends only on $d,C_0$; in the last step use
$h\ge(c_0/2)n^{\delta/d}$ for sufficiently large $n$.
This proves the lemma.
\end{proof}

\section{Random loads, overlaps, and the spectral gap}\label{sec:random}
The main output of this section is Lemma~\ref{lem:robust}. It gives
nearly balanced loads, a positive Hessian gap, and the overlap bounds
needed for local stability. It also shows which bounds survive a
fixed number of edge changes. All estimates concern the unscaled
weights; no scaling potential is assumed to exist.

We first prove the required unweighted extension bounds in the complete
ambient hypergraph. We then distinguish the individual-overlap and
squared-overlap scales. Proposition~\ref{prop:weighted-concentration}
uses conditional matrix exposure to obtain the weighted load and
Hessian estimates. Finally, Lemma~\ref{lem:robust} combines these
estimates and proves their stability under edge changes.

\subsection{Unweighted extension bounds}\label{sec:counts}
Couple $G_p$ to an ambient $\mathbb H\sim G^{(k)}(n,p)$ by writing
$G_p=G\cap\mathbb H$. Let $I_e$ be the independent indicators of all
$k$-sets; the unused nonhost indicators are harmless auxiliary
variables. For $S\subseteq[n]$ with $k\le |S|\le r$, put
\[
 \operatorname{ext}(S)=\sum_{\substack{Q\supseteq S\\|Q|=r}}
       \prod_{h\in E(Q)\setminus E(S)}I_h.
\]
Let $W=\max_{k+1\le|S|\le r}\operatorname{ext}(S)$ and
$D_*=\max_{|e|=k}\operatorname{ext}(e)$.
All requirements internal to the root set are omitted. In particular,
$\operatorname{ext}(S)=1$ for $|S|=r$, and $W=1$ for $r=k+1$.
Set $\operatorname{ext}(S)=0$ when $|S|>r$.
For distinct $k$-sets $e,j$, define
\[
 N_0(e,j)=\sum_{\substack{Q\supseteq e\cup j\\|Q|=r}}
       \prod_{h\in E(Q)\setminus\{e,j\}}I_h.
\]
Dropping the remaining requirements internal to $e\cup j$ gives
$N_0(e,j)\le\operatorname{ext}(e\cup j)\le W$ whenever this count is
nonzero. Neither terminal indicator is conditioned to be present.

For roots of size $s$, write
$\lambda_s=n^{r-s}p^{\binom{r}{k}-\binom{s}{k}}$.
The exact extension mean is
$\binom{n-s}{r-s}p^{\binom{r}{k}-\binom{s}{k}}$ and is comparable to
$\lambda_s$. Discrete convexity of $\log\lambda_s$ gives
$L_*:=\max_{k+1\le s\le r}\lambda_s=\max\{1,\lambda_{k+1}\}=\mu\gamma_{k,r}$.
For $r\ge k+2$, recall $a=(r-k)/(\binom{r}{k}-1)$. Set
$a_2=(r-k-1)/(\binom{r}{k}-k-1)$, $\epsilon_*=(a-a_2)/2$,
and $\alpha=\min\{\epsilon_*(\binom{r}{k}-1),1-ka\}$.
The strict convexity used in \eqref{eq:strict-density} gives
$0<a_2<a$, so these constants are positive. Set
$p_*=n^{-a+\epsilon_*}$. Below this dividing density, larger-root
completion counts are uniformly bounded. Above it, both $\mu$ and
$np^k$ are polynomially large, allowing moment estimates.

\begin{lemma}\label{lem:root-envelope}
For fixed $k\ge2$, $r\ge k+1$ and $D>0$, there is $C=C(k,r,D)$
such that, uniformly when $\mu\ge C_{\mathrm{ext}}\log n$ and
$C_{\mathrm{ext}}\ge2$, with probability at least $1-n^{-D}$,
\[
 W\le u_*:=C\mu\sqrt{\frac{\gamma_{k,r}(n,p)}{\log n}}.
\]
For $r\ge k+2$ and $p\le p_*$, the stronger bound $W\le C$ holds
with the same failure probability. For $r=k+1$, $W=1$
deterministically. The constants and lower bound on $n$ are
independent of $C_{\mathrm{ext}}$.
\end{lemma}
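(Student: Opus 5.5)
The plan is to bound each $\operatorname{ext}(S)$, $k+1\le |S|\le r$, by Lemma~\ref{lem:polynomial-upper-tail} and then union-bound over the at most $rn^r$ root sets $S$. The case $r=k+1$ is immediate: every root has size $r$, so $\operatorname{ext}(S)=1$. Assume $r\ge k+2$. For a root of size $s$, $\operatorname{ext}(S)$ is a multilinear polynomial with nonnegative coefficients, of degree $d=\binom rk-\binom sk$, in the indicators $I_h$. Its mean is $\Theta(\lambda_s)\le L_*$.

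The key computation is a derivative bound. For a set $T$ of $t\ge1$ edges outside $E(S)$, $\E\partial_T\operatorname{ext}(S)$ counts extensions $Q\supseteq S\cup\bigcup T$. Let $S'=S\cup V(T)$ with $|S'|=s'$. Then this derivative is at most
\[
 C\,n^{r-s'}p^{\binom rk-\binom sk-t'}
\]
with $t'\le\binom{s'}{k}-\binom sk$. Thus, up to constants, it is bounded by $\lambda_{s'}$ times $p^{-(\text{edges of }T\text{ inside }S'\text{ not in }S)}$, which in turn is bounded by the extension count of the larger root $S'$ with some edges forced.

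The estimates then split at $p_*$.

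\emph{Dense regime, $p\ge p_*$.} Here $\mu$ and $np^k$ are polynomially large, with exponent at least $\alpha$. I take $M=u_*=C\mu\sqrt{\gamma/\log n}$. Since $L_*=\mu\gamma$ and $\gamma\log n\le 1$, we get $M\ge L_*\cdot C/\sqrt{\gamma\log n}\ge L_*$. So the mean hypothesis holds, even with slack.

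For the derivatives I want $\E\partial_T\operatorname{ext}(S)\le Mn^{-\delta}$. When $s'=r$ the derivative is at most $1$, and $M\ge \mu^{1/2}$-ish is polynomially large. More precisely, $\mu\sqrt{\gamma}\ge\sqrt\mu$ when $\gamma\ge1/\mu$, so $M\ge\sqrt{\mu}/\sqrt{\log n}\ge n^{\alpha/3}$.

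For intermediate $s'$, the derivative is at most $n^{r-s'}p^{\binom rk-\binom{s'}k}=\lambda_{s'}$. I plan to show $\lambda_{s'}\le L_*$ and then compare $L_*$ with $M$: the ratio is $M/L_*=C/\sqrt{\gamma\log n}$, and this is only logarithmic when $\gamma\asymp 1/\mu$ in the $\mu\ge np^k$ case. This is the main obstacle: the naive bound gives only logarithmic slack, not $n^{-\delta}$.

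What I will try first is a finer estimate. If $T$ is nonempty, the derivative loses at least one factor $p^{-1}$ but gains a factor $n^{-1}$ for each new vertex, while edges added inside $S$'s vertex set with no new vertex cost only $p^{-1}$. So the real comparison is $\lambda_{s'}p^{-t''}$ against $M$. Using strict convexity~\eqref{eq:strict-density} with the density $p\ge n^{-a}$, I expect $\lambda_{s'}\le \mu\, n^{-c}$ for $s'>k$ whenever $p\ge p_*$, because $\log\lambda_s$ is strictly convex with endpoint $\lambda_k\asymp\mu$. Hence intermediate roots are polynomially smaller than $\mu$, and since $M\ge\mu\sqrt{\gamma}\ge \mu\cdot\mu^{-1/2}$ while intermediate $\lambda_{s'}\le\mu^{1-\epsilon}$, I need to choose $\epsilon_*$ so that this wins. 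If this fails, I will fall back on a two-step use of Lemma~\ref{lem:polynomial-upper-tail}: first bound the $W$-type counts of larger roots by induction downward in $s$ (roots of size $r$ are trivially $1$), then treat $\operatorname{ext}(S)$ conditionally, using those bounds as a deterministic envelope on the derivatives.

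\emph{Sparse regime, $p\le p_*$.} Here I aim for $W\le C$. Each $\lambda_s$ with $s\ge k+1$ satisfies $\lambda_s\le n^{-c}$ for $s\le r-1$: by the $a_2$ choice the density is below the appearance threshold of every proper intermediate extension, while $\lambda_r=1$. For such rare extensions, the standard argument bounds the number of extensions: $\operatorname{ext}(S)\ge C$ forces either many disjoint-ish extensions, of probability at most $(n^{O(1)}\lambda)^{C}$, or a denser overlapping configuration whose expected count is $n^{-c}$. A union bound over $n^r$ roots with $C$ large in terms of $D$ finishes this case. Finally, I check that all constants are independent of $C_{\mathrm{ext}}$, since only $C_{\mathrm{ext}}\ge2$ (giving $\gamma\log n\le 1$) is used.
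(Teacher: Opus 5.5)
Your architecture matches the paper's: the trivial case $r=k+1$, bounded extension counts below $p_*$, and a moment-type tail above $p_*$. However, in the dense range you reject an argument that already works, and the fix you propose instead would fail.

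You note yourself that for $p>p_*$ both $\mu$ and $np^k$ are at least $n^\alpha$. This gives $\gamma=\max\{1/(np^k),1/\mu\}\le n^{-\alpha}$, so $L_*/M=\sqrt{\gamma\log n}/C\le n^{-\alpha/3}$ for large $n$. The separation is polynomial, not logarithmic.

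Your derivative bound is also cleaner than you state it. Since $T\subseteq\binom{S'}{k}\setminus\binom{S}{k}$, one has $\E\partial_T\operatorname{ext}(S)\le n^{r-s'}p^{\binom rk-\binom{s'}k}=\lambda_{s'}$, with no extra factors of $p^{-1}$. Moreover $s'\ge s+1$, because every edge outside $E(S)$ has a vertex outside $S$. Hence every nonempty expected derivative, and the mean, is at most $L_*$. Lemma~\ref{lem:polynomial-upper-tail} with $M=\mu\sqrt{\gamma/\log n}$, $C_0=1$ and $\delta=\alpha/3$ then gives $\operatorname{ext}(S)<2M$ outside probability $\exp(-cn^{\alpha/(3d)})$. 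This survives the union bound over $rn^r$ roots.

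By contrast, comparing $M\ge\sqrt{\mu/\log n}$ with $\lambda_{s'}\le\mu^{1-\epsilon}$ throws away the factor $\sqrt\gamma$ exactly where it matters. It fails at $p=1$, where $\lambda_{k+1}=n^{r-k-1}\ge n^{(r-k)/2}=\sqrt\mu$, and no choice of $\epsilon_*$ repairs that. The downward induction with conditioning is also unnecessary.

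With this correction your dense-range route is a legitimate variant of the paper's. The paper instead proves $\E\operatorname{ext}(S)^h\le C_{k,r,h}(1+\lambda_s)^h$ for one fixed $h$, by expanding over ordered lists of completions. It then applies Markov at a level of order $n^{\alpha/4}L_*=o(u_*)$. Your version gives a superpolynomial tail, at the cost of invoking the general tail lemma.

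In the sparse range, your key input is $\lambda_u\le n^{-\nu}$ for $k+1\le u\le r-1$ when $p\le p_*$. This is exactly what the paper extracts from $e_j/j\ge1/a_2$, where $e_j=\binom rk-\binom{r-j}k$ and $\lambda_u=n^{r-u}p^{e_{r-u}}$. The counting step, however, must be made quantitative. A single configuration of expected count $n^{-c}$ does not survive the union bound over $n^r$ roots. Also, an arbitrary new completion need not have a vertex outside the union of the earlier ones.

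The paper handles this as follows:
\begin{enumerate}
\item It selects completions greedily, each adding $j_i\ge1$ vertices outside the current union. This is possible while the union has fewer than $J$ nonroot vertices, provided $\operatorname{ext}(S)>\binom{J+r}{r-s}$.
\item It stops once $J$ nonroot vertices are present.
\item It bounds step $i$ by $n^{j_i}p^{e_{j_i}}\le n^{-\nu j_i}$, since the hyperedges meeting a new vertex are disjoint from all earlier requirements.
\end{enumerate}
Choosing $\nu J>D+r+2$ then closes the union bound. A sunflower with $m$ petals and kernel $U\supseteq S$, of cost at most $n^{|U|-s}\lambda_{|U|}^m$, would serve equally well.

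Two smaller points. First, $(n^{O(1)}\lambda)^C$ should read $(O(1)\lambda_s)^C$. Second, to deduce the first assertion from $W\le C$ here, and from $W=1$ when $r=k+1$, you need $u_*\ge C\sqrt{\mu/\log n}\ge C\sqrt{C_{\mathrm{ext}}}\ge C$, taking $C\ge1$.
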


\begin{proof}
Since $\gamma_{k,r}\ge1/\mu$, we have
$\mu\sqrt{\gamma_{k,r}/\log n}\ge\sqrt{\mu/\log n}
\ge\sqrt{C_{\mathrm{ext}}}$.
This proves the assertion for $r=k+1$ and shows that the stronger
sparse-range bound below implies the general envelope. For
$r\ge k+2$, we use a greedy selection of completions when
$p\le p_*$ and a fixed-moment estimate when $p>p_*$.

\proofstep{Step 1: The range $p\le p_*$.}

Fix a root set $S$ with $k+1\le s=|S|<r$. A newly selected completion
that introduces $j$ vertices outside $S$ and the union of previous
completions requires
$e_j=\binom{r}{k}-\binom{r-j}{k}$
new hyperedges: these are all the hyperedges of that completion that
meet a newly introduced vertex. They are disjoint from the union of
earlier requirements. Ignoring additional requirements on old
vertices can only increase the probability.
Since $1\le j\le r-s\le r-k-1$, averaging the decreasing increments
of $e_j$ gives
$e_j/j\ge(\binom{r}{k}-k-1)/(r-k-1)=1/a_2$.
Hence $n^jp^{e_j}\le n^{-\nu j}$ throughout this range, where
$\nu=(a-\epsilon_*)/a_2-1>0$.

Choose a fixed integer $J$ with $\nu J>D+r+2$.
If $\operatorname{ext}(S)>\binom{J+r}{r-s}$, greedily select
completions, each introducing a new vertex, until their union has at
least $J$ nonroot vertices. Such a choice is possible before stopping:
a union with fewer than $J$ nonroot vertices contains at most
$\binom{J+r}{r-s}$ candidate completions. The stopped sequence has at
most $J$ terms and between $J$ and $J+r$ nonroot vertices.

There are only constantly many abstract overlap patterns for such a
sequence. If a pattern introduces $j_1,\ldots,j_h$ new vertices at
successive steps and $v=\sum_i j_i\ge J$, its total probability at a
fixed root is at most
\[
 C_{k,r,J} n^v p^{\sum_i e_{j_i}}
 \le C_{k,r,J}\prod_i n^{-\nu j_i}
 \le C_{k,r,J}n^{-\nu J}.
\]
Here the newly required hyperedges at the different steps are disjoint.
A union bound over patterns and at most $rn^r$ root sets proves the
asserted bounded envelope. Only the selected completions must
introduce new vertices; arbitrary distinct completions need not do so.

\proofstep{Step 2: Fixed moments and the range $p>p_*$.}

For every fixed positive integer $h$ and every root set of size $s$,
\begin{equation}\label{eq:fixed-root-moment}
 \E\operatorname{ext}(S)^h\le C_{k,r,h}(1+\lambda_s)^h.
\end{equation}
To see this, expand the ordinary moment over ordered lists of $h$
completions, allowing repetitions. At each step the old union has at
most $hr$ vertices. If the next completion introduces $j$ new
vertices, its old vertices have only constantly many choices, and its
new choices and requirements contribute at most $n^jp^{e_j}$.
This remains valid for $j=0$, when the factor is one and further
requirements among old vertices may be ignored.
The sequence $j\mapsto\log(n^jp^{e_j})$ is convex, because $e_j$ has
decreasing increments and $\log p\le0$. Its maximum on
$0\le j\le r-s$ is therefore at an endpoint, giving $1$ or
$\lambda_s$. Summing the finitely many possibilities at each step
proves \eqref{eq:fixed-root-moment}. This argument holds for every
$0<p\le1$.

In the present range, the definitions of $p_*$ and $\alpha$ give
$\mu,np^k\ge n^\alpha$. Take a fixed $h$ with
$h\alpha/4>D+r+2$ and apply Markov's inequality and a union bound to
\eqref{eq:fixed-root-moment}. With the required failure probability,
$W\le C n^{\alpha/4}(1+\lambda_{k+1})
 \le2C\mu n^{\alpha/4}\gamma_{k,r}$.
Since $\gamma_{k,r}\le n^{-\alpha}$ in this range,
\[
 \frac{n^{\alpha/4}\gamma_{k,r}}{\sqrt{\gamma_{k,r}/\log n}}
 =n^{\alpha/4}\sqrt{\gamma_{k,r}\log n}
 \le n^{-\alpha/4}\sqrt{\log n}=o(1).
\]
This proves the asserted envelope. All estimates are uniform for
each deterministic density in the stated range.
\end{proof}

We next prove the two upper bounds needed for the conditional exposure.
Near the logarithmic extension scale, a disjoint-family argument
handles the punctured clique degree. The count with one further
hyperedge omitted has polynomially smaller derivative scales and is
controlled by Lemma~\ref{lem:polynomial-upper-tail} throughout the
required range.

\begin{lemma}\label{lem:root-inputs}
For every fixed $D>0$, a sufficiently large $C_{\mathrm{ext}}=C_{\mathrm{ext}}(k,r,D)$ ensures
that, uniformly when $\mu\ge C_{\mathrm{ext}}\log n$, with probability at least
$1-n^{-D}$ we have $D_*\le C\mu$ and
$p\sum_{j\ne e}N_0(e,j)\le C\mu$ for every ambient $k$-set $e$.
The constant $C$ depends only on $k,r,D$.
\end{lemma}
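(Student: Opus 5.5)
We bound the two counts separately. Throughout, fix an ambient $k$-set $e$ and aim for a failure probability of at most $n^{-D-k}$, then take a union bound over the at most $n^k$ choices of $e$. The exact mean of $\operatorname{ext}(e)$ is $\binom{n-k}{r-k}p^{\binom rk-1}\asymp\mu$.

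\textbf{The count $D_*$: disjoint families.} We write $\operatorname{ext}(e)$ as a sum over $r$-sets $Q\supseteq e$ of the monomials $\prod_{h\in E(Q)\setminus\{e\}}I_h$. For $p\le p_*$ (and $r\ge k+2$), Lemma~\ref{lem:root-envelope} gives $W\le C$ with failure probability $n^{-D-k}$. Every completion counted in $\operatorname{ext}(e)$ shares at least one side hyperedge with only $O(W)$ other completions through a common root of size at least $k+1$: two completions sharing a side edge $h\ne e$ share the root $e\cup h$, of size at least $k+1$.

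We then use the standard disjoint-family bound. If $\operatorname{ext}(e)\ge C'\mu$, a greedy selection yields at least $C'\mu/(CW\cdot\binom rk)$ completions with pairwise disjoint requirement sets. By the van den Berg--Kesten/Janson disjoint-occurrence bound, the probability that some disjoint family of size $m$ exists is at most $(e\mu'/m)^m$, where $\mu'\asymp\mu$ is the mean. With $m=c\mu$ and $c$ a large multiple of the mean constant, this is at most $e^{-m}\le n^{-cC_{\mathrm{ext}}}$, which is below $n^{-D-k}$ once $C_{\mathrm{ext}}$ is large.

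For $p>p_*$ we have $\mu\ge n^\alpha$. Here we apply Lemma~\ref{lem:polynomial-upper-tail} directly to $Z=\operatorname{ext}(e)$ with $M$ the mean. A derivative $\partial_T Z$ with respect to $|T|\ge1$ side edges counts extensions of the root $e\cup\bigcup T$, of size $s\ge k+1$. Its expectation is $O(\lambda_s)\le O(\max\{1,\lambda_{k+1}\})=O(\mu\gamma_{k,r})\le\mu n^{-\alpha}$, so the lemma gives failure probability $\exp(-cn^{\alpha/d})$.

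\textbf{The count $p\sum_{j\ne e}N_0(e,j)$: polynomial tail.} We set $Y=p\sum_{j\ne e}N_0(e,j)$. Each clique $Q\supseteq e$ contributes $\binom rk-1$ terms, one for each $j\in E(Q)\setminus\{e\}$, and each term is $p$ times a product of $\binom rk-2$ indicators. Hence $\E Y\le C\mu$. A derivative with respect to a nonempty set $T$ of side edges counts completions of a root $S\supseteq e\cup\bigcup T$ with $|S|\ge k+1$, with at most one further edge omitted and compensated by the factor $p$. The root $S$ carries at least $k+1$ forced edges counted (the edges of $T$ and $j$), whereas $\lambda_s$ omits only $\binom sk$ of them. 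A short computation gives
\[
\E\partial_T Y\le C\bigl(\lambda_{k+1}+1\bigr)\cdot p^{\,\cdot}\cdots\le C\mu\max\Bigl\{\frac1{np^k}\cdot p,\ \frac1{\mu}\Bigr\}.
\]
We then want to show this is at most $C\mu n^{-\delta}$ for a fixed $\delta>0$ throughout $\mu\ge C_{\mathrm{ext}}\log n$. The key point is that omitting one edge and multiplying by $p$ costs nothing extra at the extremal root $s=k+1$, while $p\ge n^{-a}$ and $\mu\ge\log n$ leave a polynomial saving.

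\textbf{The main obstacle} is this derivative bound near the logarithmic scale, where $\mu$ is only logarithmic and there is little room for polynomial savings. The derivative scales must be compared with $\mu$ case by case in $|\bigcup T|$ using the strict inequality \eqref{eq:strict-density}. The delicate case is $r=k+1$ with $T=\{j\}$: there $N_0(e,j)\le1$, so $Y\le p\cdot\#\{j\}$ is essentially $p$ times a codegree count. We handle this case by a direct Chernoff bound, with mean $\asymp np^{k}\cdot$const$=\mu$ up to constants, rather than by Lemma~\ref{lem:polynomial-upper-tail}.
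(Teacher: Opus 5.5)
Your argument for $D_*$ is essentially the paper's. Below $p_*$ you use a disjoint-family first-moment count on the event $W=O(1)$; your side-edge conflict graph, of maximum degree at most $(\binom{r}{k}-1)W$, is a harmless variant of the paper's vertex-disjoint selection. Above $p_*$ you use Lemma~\ref{lem:polynomial-upper-tail} with derivative scale $L_*\le\mu n^{-\alpha}$. You do leave out $r=k+1$. There $p_*$ is undefined, and the polynomial tail lemma fails near $\mu\asymp\log n$, since a derivative of $\operatorname{ext}(e)$ can equal one. This is easily repaired: $W=1$ makes your disjoint-family count work over the whole range, or you can use that $\operatorname{ext}(e)$ is binomial, as the paper does.

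The genuine gap is the derivative bound for $Y=p\sum_{j\ne e}N_0(e,j)$ when $r\ge k+2$.
\begin{itemize}
\item \emph{The displayed estimate fails near the threshold.} It contains the term $C\mu\cdot\mu^{-1}=C$, and $C\le C\mu n^{-\delta}$ is false when $\mu$ is of order $\log n$. The hypothesis $\mu\ge C_{\mathrm{ext}}\log n$ gives no polynomial saving. So Lemma~\ref{lem:polynomial-upper-tail} does not apply as you have set it up, and your only fallback, the Chernoff bound, is offered just for $r=k+1$.
\item \emph{The missing idea.} The saving from the coefficient $p$ occurs at the full root $s=r$, not at $s=k+1$. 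If the differentiated edges together with $e$ span $r$ vertices, they determine the clique $Q$. The marked edge $j$ does not occur in the monomial $p\prod_{h\in E(Q)\setminus\{e,j\}}I_h$, so at most $\binom{r}{k}-2$ indicators can be differentiated and the factor $p$ always survives. Hence $\E\partial_T Y\le Cp$ there. Since $p\ge n^{-a}$, this gives $p/\mu=n^{-(r-k)}p^{2-\binom{r}{k}}\le n^{-a}$.
\item \emph{Intermediate spans.} For $k+1\le s<r$ the correct bound is $C\lambda_s$, not $Cp\lambda_{k+1}$; differentiating all $k$ side edges inside a $(k+1)$-set attains $\lambda_{k+1}$. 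By \eqref{eq:strict-density}, $\lambda_s/\mu=1/\mu_s\le n^{-[s-k-a(\binom{s}{k}-1)]}$.
\item \emph{Avoid the envelope.} You must not pass through $\max\{1,\lambda_{k+1}\}$, because it contains $\lambda_r=1$.
\end{itemize}
With these bounds every nonempty expected derivative is at most $C\mu n^{-\delta}$ throughout the range, for $r=k+1$ as well. Lemma~\ref{lem:polynomial-upper-tail} with $M=\mu$ and a union bound over roots then finish the proof with no special case; your Chernoff argument for $r=k+1$ is correct but unnecessary.
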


\begin{proof} We first bound the punctured clique degrees, treating the sparse range separately, and then apply the polynomial upper-tail lemma to the count with one additional hyperedge omitted.

\proofstep{Step 1: Punctured clique degrees.}

For $r=k+1$, the count $\operatorname{ext}(e)$ is binomial with
parameters $n-k,p^k$. Different added vertices give monomials using
disjoint sets of hyperedges. The scalar Bernstein bound~\cite[equation~(2.10)]{BLM} and a union bound over
at most $n^k$ roots therefore suffice.

Suppose $r\ge k+2$ and first consider $p\le p_*$. Outside probability
$n^{-D-2}$, Lemma~\ref{lem:root-envelope} gives $W\le C_D$ for a
fixed constant $C_D$. Every nonroot vertex belongs to at most $C_D$
realized completions through a given root edge $e$, because the count
through that vertex is at most $\operatorname{ext}(e\cup\{v\})$.
Consequently, a family of $\operatorname{ext}(e)$ completions contains
at least $\operatorname{ext}(e)/((r-k)C_D)$ completions whose nonroot
vertex sets are pairwise disjoint.

For such a family, the required hyperedge sets are disjoint: the
completions intersect only in $e$, whose indicator is omitted.
The expected number of unordered realized families of size $h$ is at
most
\[
 \frac{\bigl(\binom{n-k}{r-k}p^{\binom{r}{k}-1}\bigr)^h}{h!}
 \le\Bigl(\frac{\mathrm e\mu}{h}\Bigr)^h.
\]
Take $h=\lceil4\mathrm e\mu\rceil$ and then a sufficiently large
constant $C'_D$. The event
$\{\operatorname{ext}(e)>C'_D\mu,\ W\le C_D\}$ implies the existence
of such a family, so it has probability at most $e^{-c\mu}$.
If the requested family size is impossible, that event is empty.
Union-bounding over roots and using $\mu\ge C_{\mathrm{ext}}\log n$ proves the
claim in this range. The family was counted unconditionally; no
independence conditional on $W\le C_D$ is asserted.

For $p>p_*$, every nonempty mixed derivative of
$\operatorname{ext}(e)$ either vanishes or fixes at least $k+1$
vertices together with $e$. Dropping all requirements internal to
these fixed vertices bounds its expectation by $CL_*$. Also
$\E\operatorname{ext}(e)\le C\mu$ and
$\mu/L_*=1/\gamma_{k,r}\ge n^\alpha$.
Apply Lemma~\ref{lem:polynomial-upper-tail} with target scale $M=\mu$
and separation exponent $\delta=\alpha$, and then take a union bound
over the at most $n^k$ roots. This gives $D_*\le C\mu$ with
superpolynomially small failure probability.

\proofstep{Step 2: The extra omitted hyperedge.}

Set $T_e=p\sum_{j\ne e}N_0(e,j)$. Its mean is
$(\binom{r}{k}-1)\binom{n-k}{r-k}p^{\binom{r}{k}-1}=\Theta_{k,r}(\mu)$.
Let $\mathcal T$ be a nonempty set of $t$ differentiated hyperedges,
and let $s$ be the size of their vertex union together with $e$.
Derivatives containing $I_e$ vanish. For a nonzero derivative,
$k+1\le s\le r$, and each surviving monomial has expectation
$p^{\binom{r}{k}-1-t}$, including the coefficient $p$ in $T_e$.
There are at most $Cn^{r-s}$ choices of its completing $r$-set and
marked omitted hyperedge. Thus
$\E[\partial_{\mathcal T}T_e] \le Cn^{r-s}p^{\binom{r}{k}-1-t}$.
For $s<r$, the derivative avoids $e$ and
$t\le\binom{s}{k}-1$, giving the upper bound $C\lambda_s$.
For $s=r$, it also avoids the marked omitted hyperedge, so
$t\le\binom{r}{k}-2$ and the bound is $Cp$, not $C$.
By \eqref{eq:strict-density} and $p\ge n^{-a}$,
\[
 \frac{\lambda_s}{\mu}=\frac1{\mu_s}
 \le n^{-[s-k-a(\binom{s}{k}-1)]}\quad \text{for } k+1\le s<r,
 \qquad \frac p\mu\le n^{-a}.
\]
Every nonempty expected derivative is therefore at most
$C\mu n^{-\delta}$ for some fixed $\delta>0$.
When $r=k+1$, the range $s<r$ is empty and the last bound suffices.
Lemma~\ref{lem:polynomial-upper-tail}, with target scale $M=\mu$,
and a union bound over the roots prove the second upper bound for
every density under consideration.
\end{proof}

\subsection{Individual and squared overlaps}\label{sec:overlap-scales}
Lemma~\ref{lem:root-envelope} controls the largest overlap for the
conditional exposure below. The balancing argument needs a direct
bound on the sum of squared overlaps at scale $\gamma_{k,r}$.

\begin{remark}\label{rem:max-obstruction}
Even the graph case $k=2$, $r=4$ shows why the largest overlap cannot
be bounded at the smaller scale. At $p=n^{-1/3}$, one has
$\mu\gamma_{2,4}=1$, whereas the extension count of a fixed
three-vertex root is $\operatorname{Bin}(n-3,1/n)$ and tends to a
Poisson variable of mean one. For every fixed $C$, the probability
that $W>C$ is therefore bounded below by a positive constant.
Thus the envelope in Lemma~\ref{lem:root-envelope} cannot be strengthened to $W=O(\mu\gamma_{k,r})$ with
polynomially small failure probability. 
\end{remark}

The next lemma estimates the square sum rather than its maximum times a first-moment sum.

\begin{lemma}[Squared-overlap envelope]\label{lem:root-square}
For every fixed $D>0$, a sufficiently large $C_{\mathrm{ext}}=C_{\mathrm{ext}}(k,r,D)$ ensures that,
uniformly when $\mu\ge C_{\mathrm{ext}}\log n$, with probability at least $1-n^{-D}$,
\begin{equation}\label{eq:root-square}
 \max_e\sum_{j\ne e}(I_j+p)N_0(e,j)^2
 \le C_{k,r,D}\mu^2\gamma_{k,r}(n,p).
\end{equation}
The maximum and sum are over all ambient $k$-sets.
\end{lemma}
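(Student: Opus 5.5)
We need to bound $\max_e\sum_{j\ne e}(I_j+p)N_0(e,j)^2$ by $C\mu^2\gamma$. The plan is to group the pairs $(e,j)$ by the size $s=|e\cup j|$, which ranges over $k+1\le s\le r$ (pairs with $s>r$ contribute nothing), and write
\[
 N_0(e,j)^2=\sum_{Q,Q'\supseteq e\cup j}\prod_{h\in (E(Q)\cup E(Q'))\setminus\{e,j\}}I_h .
\]
Summing over $j$ with weight $I_j+p$ then expresses $X_e:=\sum_{j\ne e}(I_j+p)N_0(e,j)^2$ as a nonnegative multilinear polynomial in the indicators of degree at most $2\binom rk$. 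The proof has two steps: compute the expectation, then prove an upper tail.

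\textbf{Step 1: the expectation.} Fix the number $s$ and the overlap $m=|Q\cap Q'|\ge s$. Each term in the expectation has the following form:
\begin{itemize}
\item $n^{s-k}$ choices for $j$ (given $e$), with weight $p$ (whether from $I_j$ or from the $+p$);
\item $n^{r-s}$ choices of $Q$ together with $p^{\binom rk-\binom sk}$ for the edges of $Q$ outside $E(e\cup j)$, and $p^{\binom sk-2}$ for the requirements internal to $e\cup j$ other than $e,j$;
\item $n^{r-m}p^{\binom rk-\binom mk}$ for the part of $Q'$ outside $Q$.
\end{itemize}
Multiplying, the $Q$-part times the $j$-choice gives roughly $\mu$. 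The remaining factor is the extension $n^{r-m}p^{\binom rk-\binom mk}=\lambda_m$. By the convexity already used in Lemma~\ref{lem:root-envelope}, $\lambda_m\le L_*=\mu\gamma$ for $m\ge k+1$; here $m\ge s\ge k+1$ always holds. Hence $\E X_e\le C\mu\cdot\mu\gamma=C\mu^2\gamma$, uniformly in $p$. A point to check is that dropping requirements internal to $e\cup j$ only loses constants; with the $I_j+p$ weighting the exponents come out exactly as above.

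\textbf{Step 2: the upper tail.} In the dense range $p>p_*$, where $\mu$, $np^k$ and $1/\gamma$ are at least $n^{\alpha}$, I would apply Lemma~\ref{lem:polynomial-upper-tail} with $M=\mu^2\gamma$. Every nonempty mixed derivative fixes additional vertices beyond $e$. A derivative fixing a vertex set of size $s'>k$ loses a factor comparable to $\mu_{s'}$ relative to the mean, hence at least $1/\gamma\ge n^{\alpha}$, or at least $n^{a}$-type gains as in the proof of Lemma~\ref{lem:root-inputs}, Step~2. The expected derivatives are therefore at most $M n^{-\delta}$. One subtlety is derivatives that differentiate only edges of $Q'$ lying inside $Q$; I would handle these by bounding the derivative by the same counting with $\lambda_m$ replaced by the corresponding extension, which is still at most $L_*\cdot n^{-\delta}$ times the scale.

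In the sparse range $p\le p_*$ (and for $r=k+1$), moments alone are too weak near the logarithmic scale, so I would use the structure instead. Write $X_e\le W\sum_j(I_j+p)N_0(e,j)$ and split the sum according to whether $I_j=1$. The term with weight $p$ is controlled by Lemma~\ref{lem:root-inputs}, since $p\sum_jN_0(e,j)\le C\mu$. The term with $I_j=1$ is at most a constant multiple of $\binom rk\,\mathrm{ext}(e)$, since each clique through $e$ is counted once per edge $j$ it contains; hence it is $O(D_*)=O(\mu)$. Using $W\le C$ (Lemma~\ref{lem:root-envelope}) gives $X_e=O(\mu)$. For this to suffice we need $\mu\le C\mu^2\gamma$, which holds because $\mu\gamma\ge1$. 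For $r=k+1$, $W=1$ and the same argument applies.

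\textbf{Main obstacle.} The hard case is therefore the dense range, since the sparse range reduces to earlier lemmas. In the dense range the difficulty is verifying uniformly that every mixed derivative of the squared count, including derivatives mixing edges of both cliques and the terminal $I_j$, gains a polynomial factor. This derivative bookkeeping is what I expect the appendix on the squared-overlap count to carry out. After the tail bound is established, a union bound over the $n^k$ roots $e$ finishes the proof.
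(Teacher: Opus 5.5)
Your overall route is the paper's. You use the same case split ($r=k+1$ and $p\le p_*$ versus $p>p_*$). Your sparse-range reduction is the same: $W=O(1)$, the identity $\sum_{j\ne e}I_jN_0(e,j)=(\binom rk-1)\operatorname{ext}(e)$, and Lemma~\ref{lem:root-inputs}. Your mean computation $\E X_e\le C\mu\sum_m\lambda_m\le C\mu^2\gamma_{k,r}$ matches, as does the application of Lemma~\ref{lem:polynomial-upper-tail} with target $M=\mu^2\gamma_{k,r}$ followed by a union bound over roots. What you leave unproved is the one substantive step of the dense range, the bound on mixed derivatives. As sketched, that bound does not go through.

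Your heuristic only works if, in each monomial, you root first a clique that contains a differentiated edge. Suppose instead you keep $Q$ rooted at $e$ and replace $\lambda_m$ by the extension of $Q'$ from $Q\cap Q'$ together with its fixed vertices, as your treatment of the ``subtlety'' suggests. Then a derivative whose differentiated edges all lie in $Q'$ and meet $Q'\setminus Q$ gains nothing: the first factor stays $\mu$, and the second is only bounded by $L_*$. For example, at $k=2$, $r=4$, $p=n^{-1/3}$ (which lies above $p_*=n^{-11/30}$), one has $\lambda_3=\lambda_4=L_*=1$. So this bound is $\mu L_*=M$, with no factor $n^{-\delta}$. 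The case you single out, edges of $Q'$ inside $Q$, is harmless, since such edges lie in $E(Q\cap Q')\subseteq E(Q)$.

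The paper resolves this using the symmetry of the ordered pair:
\begin{itemize}
\item Order the two cliques so that the first contains a differentiated edge.
\item Root the first clique at $e$ together with all fixed derivative vertices it contains; this root has size $t_1\ge k+1$.
\item Root the second clique at the $s$ intersection vertices together with its own fixed vertices; call this size $t_2$.
\item Drop all requirements internal to these two roots, so that no differentiated variable remains required.
\end{itemize}
The remaining requirements are disjoint, because those of the second clique have a vertex outside the intersection. This gives $\E[\partial_TZ_e]\le C\lambda_{t_1}\lambda_{t_2}\le CL_*^2=CM\gamma_{k,r}\le CMn^{-\alpha}$. For the $p$-weighted monomials, deleting the requirement at $j$ costs at most $p^{-1}$, which the coefficient $p$ cancels. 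With this ordering step added, your argument is complete.
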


The proof is given in Appendix~\ref{app:squared-overlap}. The estimate
includes rows indexed by absent terminal edges and supplies the
$O(\gamma_{k,r})$ energy bounds used in the local scaling argument.
It does not assert this smaller scale for every individual overlap.

\subsection{Concentration of weighted loads and the Hessian}\label{sec:weighted}
Extend the seed coefficients to all $r$-sets by putting $b_Q=n^{r-k}\phi(Q)$ on host cliques and $b_Q=0$ otherwise. Then $0\le b_Q\le R$ and $\sum_{Q\ni e}b_Q=n^{r-k}$ for every host edge $e$. For each such edge, retained or not, define its punctured weighted load by
\[
 D_e(p)=\frac1\mu\sum_{Q\supset e}b_Q
             \prod_{j\in E(Q)\setminus\{e\}}I_j.
\]

Let $\Pi$ denote the projection onto retained host edges and let $\Gamma_{\phi}=A_{\phi}-I$. On the retained-edge space put
\[
 \mathsf S_p=I+p^{-1}(\Pi\Gamma_{\phi}\Pi)|_{\operatorname{im}\Pi},
 \qquad
 \mathsf H_p^0=p^{1-\binom{r}{k}}\sum_{Q\in\cK_r(G_p)}\phi(Q)\ones_Q\ones_Q^{\T}.
\]
Seed balance gives $\E D_e(p)=1$, and the diagonal of $\mathsf H_p^0$ is $D_e(p)$ on retained edges.

\begin{proposition}[Weighted load and Hessian concentration]\label{prop:weighted-concentration}
Fix $k,r,R,D$ and an accuracy $\eta>0$. For sufficiently large
$C_{\mathrm{ext}}$ and $n$, uniformly when
$\mu\ge C_{\mathrm{ext}}\log n$, with probability at least $1-n^{-D}$,
\begin{align}
 \max_{e\in E(G)}|D_e(p)-1|&\le\eta,
       \label{eq:weighted-diagonal}\\
 \norm{\mathsf H_p^0-\mathsf S_p}&\le\eta.
       \label{eq:weighted-motif}
\end{align}
\end{proposition}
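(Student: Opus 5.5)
The plan is a martingale (conditional exposure) argument. Order the ambient indicators $I_j$ arbitrarily, expose them one at a time, and write both $D_e(p)-1$ and the matrix $\mathsf X=\mathsf H_p^0-\mathsf S_p$ as sums of martingale differences. On the retained-edge space, the diagonal of $\mathsf X$ is $D_e(p)-1$. Its off-diagonal entry at retained $e\ne f$ is
\[
 p^{1-\binom rk}\sum_{Q\supseteq e\cup f}\phi(Q)\prod_{h\in E(Q)\setminus\{e,f\}}I_h
 -p^{-1}(A_\phi)_{ef},
\]
which has conditional mean zero given that $e,f$ are retained. So $\mathsf X$ is a matrix polynomial in the indicators with centered terms. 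I will apply the martingale versions of the matrix Bernstein/Freedman inequality (the same mechanism behind Theorems~\ref{thm:Tropp-self} and~\ref{thm:Tropp-rect}), with dimension at most $n^k$. I plan to use the scalar Freedman inequality for the diagonal \eqref{eq:weighted-diagonal}, and the matrix version for the off-diagonal part, so that \eqref{eq:weighted-motif} follows from the two by the triangle inequality. To keep the projection $\Pi$ from interfering, I work in the full host-edge space and multiply by the terminal indicators $I_e,I_f$ only at the end; restriction to $\operatorname{im}\Pi$ does not increase operator norms.

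\textbf{Increments.} For the $j$-th exposure, the increment $\Delta_j$ is $(I_j-p)$ times the conditional expectation of the derivative $\partial_j\mathsf X$. Its entry at $(e,f)$ is at most $\mu^{-1}$ times a partially exposed version of $N_0(e,j)$, respectively of the count of cliques through $e\cup f\cup j$. The absolute row sums of this derivative are bounded by $\mu^{-1}\sum_f(\cdots)\le C\mu^{-1}\operatorname{ext}(e\cup j)\le C W/\mu$. Since the matrix is symmetric, the row-sum bound also bounds its operator norm, giving the uniform increment bound
\[
 L\le C W/\mu\le C u_*/\mu=C\sqrt{\gamma_{k,r}/\log n}
\]
by Lemma~\ref{lem:root-envelope}. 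For the predictable quadratic variation, $\sum_j\E[\Delta_j^2\mid\cdot]$ has diagonal entry at $e$ of order $\mu^{-2}\sum_j p\,N_0(e,j)^2$, and its rows are controlled in the same way. The squared-overlap envelope of Lemma~\ref{lem:root-square} bounds this by $C\mu^{-2}\mu^2\gamma_{k,r}=C\gamma_{k,r}$, and the $\infty\to\infty$ norm bound transfers this to the operator norm. The first-moment input $p\sum_jN_0(e,j)\le C\mu$ and $D_*\le C\mu$ from Lemma~\ref{lem:root-inputs} keep the conditional weights, and hence the variance proxies, bounded along the exposure.

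\textbf{Conclusion from Freedman.} Freedman's inequality then gives, outside probability $n^{-D}$,
\[
 \|\mathsf X\|\le C\bigl(\sqrt{\gamma_{k,r}\log n}+L\log n\bigr)\le C'\sqrt{\gamma_{k,r}\log n}.
\]
By \eqref{eq:gamma-small} this is at most $\eta$ once $C_{\mathrm{ext}}$ and $n$ are large.

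\textbf{Main obstacle.} The increments and variances are bounded only on the good events of Lemmas~\ref{lem:root-envelope}, \ref{lem:root-inputs} and~\ref{lem:root-square}, which are not measurable with respect to the past filtration. I would handle this with a stopped martingale. The good events are monotone-increasing upper bounds on counts, so I define the stopping time as the first step at which a conditional-expectation version of one of these counts exceeds twice its bound. To show the stopping rarely occurs, I would control those conditional counts by Doob's inequality together with the polynomial moment bounds already proved: the fixed moments \eqref{eq:fixed-root-moment} and Lemma~\ref{lem:polynomial-upper-tail}. On the event that no stopping occurs, the stopped martingale equals $\mathsf X$.
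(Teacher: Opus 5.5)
\textbf{The gap: dropping the terminal masks.} You propose to concentrate the \emph{unmasked} matrix $\widetilde{\mathsf X}$ on all of $\R^{E(G)}$, with off-diagonal entries $p^{1-\binom rk}\sum_{Q\supseteq e\cup f}\phi(Q)\prod_{h\in E(Q)\setminus\{e,f\}}I_h-p^{-1}(A_\phi)_{ef}$, and then compress by $\Pi$. But $\widetilde{\mathsf X}$ is not small. Take $k=2$, $r=3$, $G=K_n$ and $\phi\equiv1/(n-2)$. On the star $\{ab:b\ne a\}$ the entry at $(ab,ac)$ is $(I_{bc}-p)/((n-2)p^2)$. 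So this principal block is $((n-2)p^2)^{-1}$ times the centered adjacency matrix $M$ of a copy of $G(n-1,p)$, and $\|M\|^2\ge\operatorname{tr}(M^2)/(n-1)\approx np$. Hence $\|\widetilde{\mathsf X}\|\gtrsim(np^3)^{-1/2}$, which tends to infinity at $p\asymp\sqrt{\log n/n}$, inside the range of the statement. The masked block lives on $N(a)$, which has about $np$ vertices. It has norm about $(np^2)^{-1/2}$, which is exactly what the extension condition controls.

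The same defect breaks your increment and variance bounds. The identity $\sum_f T(e,f,j)=(\binom rk-2)b(e,j)$ behind ``row sum $\le C\operatorname{ext}(e\cup j)/\mu$'' comes from restoring the indicator of $f$, so it holds only over \emph{present} $f$. Summed over all host $f$ it fails. In the example above, the diagonal of the quadratic variation is about $(np^3)^{-1}$ rather than $(np^2)^{-1}$.

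\textbf{What is needed, and how the paper gets it.} The masks must stay in the polynomial, and ideally be revealed before the side edges that carry the randomness. With one global ordering this cannot happen, because every edge is a terminal for some entries and a side edge for others. The paper's random $r$-coloring solves this. On transversal cliques of a fixed color type, the two terminal color blocks and the $\binom rk-2$ side blocks are disjoint sets of variables. The terminal blocks are revealed first, and the side blocks are exposed one at a time through the telescoping identity. Each level is then $\sum_{j\in g_t}(I_j-p)A_j$, with all coefficients measurable for a single $\sigma$-field $\cF_{\chi,t}$. This gives the aggregate identity $p\sum_j(A_j\ones)_e=\E[X_e\mid\cF_{\chi,t}]$. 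It also reduces truncation to one Markov bound on $\E[W\mid\cF_{\chi,t}]$ and $\E[D_*\mid\cF_{\chi,t}]$, followed by Minkowski over colorings.

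If you instead keep the masks inside one global Doob martingale, your predictable variation becomes a sum of conditional expectations taken at different times. Neither the realized bounds of Lemmas~\ref{lem:root-inputs} and~\ref{lem:root-square} nor Doob's inequality for a single conditional count controls that sum directly.

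\textbf{A secondary point on the rate.} Your claim that the variance proxy is $O(\gamma_{k,r})$ via Lemma~\ref{lem:root-square} is not justified. The operator norm of $\sum_j pA_j^2$ is not controlled by its diagonal, and its row sums are not squared-overlap sums. The paper does not need this. It bounds the variance by $\max_j\|A_j\|$ times the aggregate, of order $u_*\mu$, which gives the weaker rate $(\gamma_{k,r}\log n)^{1/4}$; this still suffices.
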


\begin{proof}
Coloring separates the clique indicators into independent blocks.
The rooted extension bounds control the coefficient norms in each
block, allowing conditional matrix concentration. We average these
bounds over colorings and then use the scalar version for punctured
loads; no simultaneous good event over all colorings is needed.

\proofstep{Step 1: Reconstruct the off-diagonal matrix from color classes.} 

Let $\chi:[n]\to[r]$ be chosen uniformly from all $r^n$ colorings,
independently of the hypergraph. An $r$-set is \emph{transversal} when
its vertices receive all $r$ colors. For each possible terminal
intersection size $h$, with $\max\{0,2k-r\}\le h\le k-1$, fix two
distinct $k$-subsets $L_1,L_2$ of $[r]$ whose intersection has size $h$.
Rows are terminal edges of color type $L_1$ and columns are terminal
edges of color type $L_2$. An entry is zero unless the actual vertices
in the shared colors agree. Form the rectangular block $A$ for this
type and take its symmetric dilation
$\bigl(\begin{smallmatrix}0&A\\A^{\T}&0\end{smallmatrix}\bigr)$,
embedded in the all-$k$-set space.

For a fixed ordered terminal pair $(e,f)$ with $|e\cap f|=h$ and a
fixed completing $r$-set $Q$, the probability of representation in
this dilation is
$c_h^{\rm col}=2h!(k-h)!^2(r-2k+h)!/r^r$.
The factor two exchanges the terminal orientations; the factorials
assign the shared vertices, the two exclusive parts, and the remaining
vertices to their prescribed colors. Averaging over $\chi$,
multiplying by $(c_h^{\rm col})^{-1}$, and summing over the possible
$h$ reconstructs the off-diagonal fluctuation matrix exactly, with its
terminal masks retained. This identity holds for each realization.

\proofstep{Step 2: Bound one exposed block.} 

It remains to control the exposure terms in this reconstruction. Fix one terminal type and one coloring. Rows and columns are the
$k$-sets with the prescribed color types just defined, with zero
entries for incompatible common vertices. Reveal the two terminal edge blocks.  Order the remaining $\ell=\binom{r}{k}-2$ color-$k$-set blocks as $g_1,\ldots,g_\ell$.  If $Q$ is transversal and has the required terminal edges, write $J_h(Q)$ for the Bernoulli indicator of the unique edge of $Q$ in block $g_h$.  Then
\begin{equation*}
 \prod_{h=1}^{\ell}J_h(Q)-p^\ell
 =\sum_{t=1}^{\ell}p^{\ell-t}
    \Bigl(\prod_{h<t}J_h(Q)\Bigr)(J_t(Q)-p).
\end{equation*}
This elementary telescoping identity is the source of conditional independence: in the $t$th summand the future blocks have been replaced by their mean factors, and the current scalar block $g_t$ is independent of the exposed coefficients.

Let $\cF_{\chi,t}$ be generated by the two terminal blocks and by $g_1,\ldots,g_{t-1}$.  The candidate contribution is $b_QI_eI_f(\prod_hJ_h(Q)-p^\ell)$, so the terminal masks remain throughout. For an actual $k$-set $j$ in the block $g_t$, let $A_j$ be the rectangular matrix whose $(e,f)$ entry is
\begin{equation*}
 (A_j)_{ef}
 =\sum_{\substack{Q\supset e,f,j\\Q\text{ transversal of the prescribed type}}}
 b_Q I_eI_f p^{\ell-t}
 \prod_{h<t}J_h(Q),
\end{equation*}
where in the sum $j$ is the unique edge of $Q$ in $g_t$ and future blocks are represented by their mean factors.  Then $A_j$ is nonnegative and $\cF_{\chi,t}$-measurable, and the $t$th exposure term is exactly
$T_{\chi,t}=\sum_{j\in g_t}(I_j-p)A_j$.
Conditionally on $\cF_{\chi,t}$, the variables $(I_j-p)_{j\in g_t}$ are independent and mean-zero.

For a fixed terminal row $e$ and a fixed current-block $k$-set $j$, define $X_{e,j}$ by replacing every future factor $p$ in the definition of $A_j$ by the corresponding future Bernoulli indicator and summing over the same transversal candidates.  The current indicator $I_j$ is still omitted.  Since future blocks are independent of $\cF_{\chi,t}$,
$(A_j\one)_e=\E[X_{e,j}\mid\cF_{\chi,t}]$.
The roots formed by $e\cup j$ contain at least $k+1$ vertices, and at most $r$ whenever a completion exists.  Dropping the color restrictions, dropping all requirements internal to those root vertices, and using $b_Q\le R$ yields the pointwise domination
$0\le X_{e,j}\le R \operatorname{ext}(e\cup j)\le RW$.
The same argument, with the column terminal $f$ in place of $e$, gives
\begin{equation}\label{eq:weighted-rowcol-coeff}
 \max_j\bigl\{\|A_j\one\|_\infty,\|A_j^{\T}\one\|_\infty\bigr\}
 \le R\E[W\mid\cF_{\chi,t}].
\end{equation}

There is also an aggregate identity.  Let
$X_e=\sum_{j\in g_t}I_jX_{e,j}$.
Every realized transversal clique through the row terminal $e$ uses exactly one edge from $g_t$, hence is counted exactly once in $X_e$.  Since the current block is independent of all $X_{e,j}$,
\begin{align*}
 p\sum_j(A_j\one)_e
 &=p\sum_j\E[X_{e,j}\mid\cF_{\chi,t}]\notag\\
 &=\E\Bigl[\sum_jI_jX_{e,j}\,\Bigm|\,\cF_{\chi,t}\Bigr]
 =\E[X_e\mid\cF_{\chi,t}].
\end{align*}
After dropping weights and colors, $X_e\le RD_*$, and therefore
\begin{equation}\label{eq:weighted-aggregate-upper}
 p\max\biggl\{\Bigl\|\sum_jA_j\one\Bigr\|_\infty,
              \Bigl\|\sum_jA_j^{\T}\one\Bigr\|_\infty\biggr\}
 \le R\E[D_*\mid\cF_{\chi,t}].
\end{equation}

\proofstep{Step 3: Truncate the coefficients and average.}

Fix a polynomial failure exponent $s$ much larger than any exponent needed later, and let $u_*$ be as in Lemma~\ref{lem:root-envelope}. The complete-host bounds give $W\le u_*$ and $D_*\le C_{k,r,s}\mu$ outside events of probability at most $n^{-s}$. Since $W\le n^r$ deterministically,
\[
 \E[W\mid\cF_{\chi,t}]
 \le u_*+\E[W\one_{\{W>u_*\}}\mid\cF_{\chi,t}],
 \qquad
 \E[W\one_{\{W>u_*\}}]\le n^{r-s}.
\]
Markov's inequality therefore gives
$\PP\left(\E[W\mid\cF_{\chi,t}]>u_*+1\right)\le n^{r-s}$.
The same argument applies to $D_*$ with threshold $C_{k,r,s}\mu$. After enlarging $u_*$ by a constant, define
\[
 \cB_{\chi,t}=
 \Bigl\{\E[W\mid\cF_{\chi,t}]\le2u_*,
       \ \E[D_*\mid\cF_{\chi,t}]\le C_{k,r,s}'\mu\Bigr\}.
\]
Then $\cB_{\chi,t}\in\cF_{\chi,t}$ and $\PP(\cB_{\chi,t}^c)\le C_{k,r,s} n^{r-s}$.

To estimate the truncated coefficients, we use Cauchy--Schwarz in the following form. For a nonnegative rectangular matrix $A$ with row sums at most $L_{\rm row}$ and column sums at most $L_{\rm col}$,
\begin{equation*}
 AA^{\T}\preceq L_{\rm col}\diag(A\one),\qquad
 A^{\T}A\preceq L_{\rm row}\diag(A^{\T}\one),\qquad
 \norm A\le\sqrt{L_{\rm row}L_{\rm col}}.
\end{equation*}
Indeed, $x^{\T}AA^{\T}x=\sum_j(\sum_iA_{ij}x_i)^2\le L_{\rm col}\sum_i x_i^2\sum_jA_{ij}$. Transposing gives the second inequality, and the first gives the norm bound.

Replace $A_j$ by $A_j^g=\one_{\cB_{\chi,t}}A_j$ and write $T^g_{\chi,t}=\sum_{j\in g_t}(I_j-p)A_j^g=\one_{\cB_{\chi,t}}T_{\chi,t}$.  On $\cB_{\chi,t}$, \eqref{eq:weighted-rowcol-coeff} and the preceding nonnegative-matrix estimate give
$\|A_j^g\|\le2Ru_*$.
Furthermore,
\[
 A_j^g(A_j^g)^{\T}
 \preceq \| (A_j^g)^{\T}\one\|_\infty\diag(A_j^g\one)
 \preceq2Ru_*\diag(A_j^g\one).
\]
Since $\E(I_j-p)^2=p(1-p)\le p$, summing and applying \eqref{eq:weighted-aggregate-upper} yields
\begin{equation*}
 \Bigl\|\sum_j\E\big[(I_j-p)^2 A_j^g(A_j^g)^{\T}\mid\cF_{\chi,t}\big]\Bigr\|
 \le C_{k,r,s}R^2u_*\mu.
\end{equation*}
The transposed variance has the same bound. Apply Theorem~\ref{thm:Tropp-rect} conditionally on $\cF_{\chi,t}$, with summand norm at most $2Ru_*$ and variance parameter at most $C_{k,r,s} R^2u_*\mu$. Both matrix dimensions are at most $n^k$, so its moment estimate applies for $q\ge C\log n$. Averaging the conditional moment bound over the exposed blocks gives
\begin{equation}\label{eq:weighted-T-moment}
 \|T^g_{\chi,t}\|_{L^q(\mathrm{op})}
 \le C_{k,r,R,s}\bigl(\sqrt{qu_*\mu}+qu_*\bigr).
\end{equation}

We now average the blocks over the auxiliary coloring. The discarded part $T^b=T-T^g$ is controlled in first moment, not in $L^q$ for $q\asymp\log n$.  Deterministically each absolute row and column sum of $T$ is at most $C_{k,r,R}n^{r}$, and hence $\|T\|\le C_{k,r,R}n^r$.  Therefore
$\E\|T^b_{\chi,t}\|\le C_{k,r,R}n^r\PP(\cB_{\chi,t}^c) \le C_{k,r,R,s}n^{2r-s}$.
The estimates are uniform over every fixed coloring and exposure level. There are only $\binom{r}{k}-2$ levels and constant reconstruction factors. Minkowski's inequality gives the same $L^q$ bound as \eqref{eq:weighted-T-moment}, up to a constant depending on $k,r$, for the averaged good part. The triangle inequality and Fubini's theorem give an expectation of $O_{k,r,R,s}(n^{2r-s})$ for the averaged bad part. No simultaneous good event over all colorings is required. Choose $s=s(k,r,D)$ first so large that Markov makes the latter smaller than $n^{-20}\mu$ with probability at least $1-n^{-D-10}$. Next take $q=C_{k,D}\log n$ and use Markov's inequality on \eqref{eq:weighted-T-moment}. Dividing the reconstructed matrix by $\mu$ gives
\[
 C_{k,r,R,D}\biggl(\sqrt{\frac{u_*\log n}{\mu}}+\frac{u_*\log n}{\mu}\biggr)
 \le C_{k,r,R,D}\Bigl((\gamma_{k,r}\log n)^{1/4}+(\gamma_{k,r}\log n)^{1/2}\Bigr).
\]
For a fixed pair of terminal edges, the conditional mean of their off-diagonal entry, conditioned only on those two edges being present, is exactly $p^{-1}(\Gamma_{\phi})_{ef}$: once the two terminal edges are conditioned present, the remaining $\binom{r}{k}-2$ clique edges survive with probability $p^{\binom{r}{k}-2}$, and the normalization is $p^{1-\binom{r}{k}}$.  This is a two-terminal conditional mean, not an expectation
conditional on the entire projection $\Pi$. Indeed, $\Pi$ records all
retained host edges, so conditioning on it determines the retained
cliques. The comparison matrix $\mathsf S_p$ must therefore be justified by
the colored exposure above, not by a matrix concentration argument
that treats $\mathsf S_p$ as $\E[\mathsf H_p^0\mid \Pi]$. This completes the off-diagonal estimate.

\proofstep{Step 4: Bound the punctured loads.} 

We now prove \eqref{eq:weighted-diagonal} by a scalar exposure, keeping its normalization explicit. Fix a host edge $e$ and omit its terminal indicator. Prescribe the distinct
colors $1,\ldots,k$ on its vertices, and color every other vertex
independently with $r$ colors. Keep completions whose remaining
$r-k$ vertices receive colors $k+1,\ldots,r$ exactly once. A fixed
$Q\supset e$ passes this filter with probability $(r-k)!/r^{r-k}$.

Expose the $\binom{r}{k}-1$ side-edge blocks in order and use the same telescoping identity. At a fixed level the conditionally mean-zero scalar sum is
$\sum_{j\in g_t}(I_j-p)c_j$, where $c_j\ge0$ is measurable in the earlier blocks. Fully realizing its future edges, while omitting $j$, gives a variable $X_{e,j}$ for which
\[
 c_j=\E[X_{e,j}\mid\cF_{\chi,t}]
       \le R\E[W\mid\cF_{\chi,t}],
 \qquad
 p\sum_jc_j\le R\E[D_*\mid\cF_{\chi,t}].
\]
The first inequality follows by dropping requirements internal to $e\cup j$; the second restores the current indicator and counts each actual punctured completion once. Thus the same event, determined by the previously exposed blocks, gives $0\le c_j^g\le2Ru_*$ and
\[
 \sum_j\E[(I_j-p)^2(c_j^g)^2\mid\cF_{\chi,t}]
 \le (\max_jc_j^g)\,p\sum_jc_j^g
 \le C_{k,r,s} R^2u_*\mu.
\]
Applying the moment estimate of Theorem~\ref{thm:Tropp-rect} to these $1\times1$ matrices gives the scalar counterpart of \eqref{eq:weighted-T-moment}. The discarded scalar part has the same deterministic polynomial bound and is handled by its averaged first moment, exactly as above.

Averaging over the colors with factor $r^{r-k}/(r-k)!$ reconstructs
\[
 \sum_{Q\supset e}b_Q
       \Bigl(\prod_{f\in E(Q)\setminus\{e\}}I_f-p^{\binom{r}{k}-1}\Bigr).
\]
The original weighted count has expectation
$p^{\binom{r}{k}-1}\sum_{Q\supset e}b_Q=\mu$, by seed balance. Division by $\mu$ therefore yields the deviation of $D_e(p)$ from one. Choose the polynomial failure exponent large enough for a union bound over the at most $n^k$ host edges. The scalar estimate has the same order as the off-diagonal estimate.
By \eqref{eq:gamma-small}, increasing $C_{\mathrm{ext}}$ and then
$n$ makes their combined bound at most $\eta$. This proves
\eqref{eq:weighted-diagonal} and \eqref{eq:weighted-motif}.
\end{proof}

\subsection{A raw event stable under edge changes}
For distinct host edges $e,j$, define the normalized two-hole weighted count
\[
 b(e,j)=p^{1-\binom{r}{k}}\sum_{Q\supset e,j}\phi(Q)
                    \prod_{f\in E(Q)\setminus\{e,j\}}I_f.
\]
Set $b(e,e)=0$. A \emph{flip} toggles an edge indicator.
Independent resampling flips $I_j$ with conditional probability
$\omega_j=p$ if $I_j=0$ and $\omega_j=1-p$ if $I_j=1$;
otherwise it leaves the hypergraph unchanged. For a subhypergraph $F\subseteq G$, write $D_e(F)$, $b_F(e,j)$, and $\mathsf H_F(0)$ for the same punctured load, two-hole count, and raw Hessian evaluated on $F$, with $p$ and the normalization kept fixed.

\begin{lemma}\label{lem:robust}
Fix $0<\theta<1/8$, $D>0$, and a nonnegative integer $\ell_0$.
For sufficiently large $C_{\mathrm{ext}}$ and $n$, if $\mu_r(n,p)\ge C_{\mathrm{ext}}\log n$, then with
probability at least $1-n^{-D}$ the following holds at the original sample
$G_p$. Every host edge has punctured load $D_e(p)=1\pm\theta$,
the raw Hessian satisfies $\mathsf H_p^0\succeq\kappa I/4$, and
\begin{equation}\label{eq:overlap-bounds}
 \begin{gathered}
 \max_{e\ne j}b(e,j)\le C\sqrt{\gamma_{k,r}},\qquad
 \max_e\sum_{j\ne e}\omega_jb(e,j)\le C,\\
 \max_e\sum_{j\ne e}\omega_jb(e,j)^2\le C\gamma_{k,r},\qquad
 \max_e\sum_{\substack{j\in E(G_p)\\j\ne e}}b(e,j)^2\le C\gamma_{k,r}.
 \end{gathered}
\end{equation}
All maxima are over host edges, present or absent, and the unrestricted
sums are over other host edges. Every hypergraph $F$ obtained from $G_p$ by
at most $\ell_0$ host-edge flips has punctured loads $1\pm2\theta$,
maximum two-hole count at most $C_{\ell_0}\sqrt{\gamma_{k,r}}$, raw Hessian at least
$\kappa I/8$, and
\begin{equation}\label{eq:robust-row-square}
 \max_{e\in E(G)}\mathcal L_e(F)\le C_{\ell_0}\gamma_{k,r},
 \qquad
 \mathcal L_e(F)=\sum_{\substack{a\in E(F)\\a\ne e}}b_F(e,a)^2.
\end{equation}
The two $\omega_j$-weighted family bounds are asserted only at the
original sample; the density and normalization remain fixed
throughout the flips.
\end{lemma}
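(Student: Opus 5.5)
We will intersect the good events of Proposition~\ref{prop:weighted-concentration} (with accuracy $\eta=\min\{\theta,\kappa/8\}$), Lemma~\ref{lem:restriction}, Lemma~\ref{lem:root-envelope}, Lemma~\ref{lem:root-inputs} and Lemma~\ref{lem:root-square}, each with failure exponent $D+1$. We then derive every assertion deterministically on this intersection. The load bound $D_e(p)=1\pm\theta$ is \eqref{eq:weighted-diagonal}. For the Hessian, Lemma~\ref{lem:restriction} applies to $\Gamma_\phi$ with $s\le R(\binom rk-1)/n$, and the discussion after it gives failure probability below any power of $n$. It yields $\mathsf S_p\succeq\kappa I/2$. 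Combined with \eqref{eq:weighted-motif} this gives $\mathsf H_p^0\succeq(\kappa/2-\kappa/8)I\succeq\kappa I/4$.

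For the overlap bounds, the plan is to compare the weighted two-hole count with the unweighted one. We have $b(e,j)\le Rp^{1-\binom rk}n^{k-r}N_0(e,j)=R\,pN_0(e,j)/\mu$.
\begin{itemize}
\item The individual bound uses $N_0\le W\le u_*$. This gives $b\le CRp\sqrt{\gamma/\log n}\le C\sqrt\gamma$.
\item The first-moment bound uses $\omega_j\le I_j+p$, which gives $\sum_j\omega_jb(e,j)\le (R/\mu)\sum_j(I_j+p)pN_0(e,j)$. The $p$-part is controlled by Lemma~\ref{lem:root-inputs}. The $I_j$-part equals $R$ times the $(\binom rk-1)$-fold count of punctured cliques through $e$, up to the factor $p\le1$, hence is at most $C D_*/\mu\le C$.
\item The two squared bounds follow directly from Lemma~\ref{lem:root-square}. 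We have $b^2\le R^2p^2N_0^2/\mu^2$, and the right side $\mu^2\gamma$ cancels; the restriction to $j\in E(G_p)$ is the $I_j$ part.
\end{itemize}

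For the flip stability, let $F$ differ from $G_p$ in a set $S$ of at most $\ell_0$ edges. Each affected quantity changes only through cliques containing an edge of $S$.
\begin{itemize}
\item For a punctured load $D_e(F)$, the change is at most $\sum_{j\in S}b_{F\cup S}(e,j)$. We bound a two-hole count on $F\cup S$ by one on $G_p$ plus cliques meeting $S$ twice. Iterating at most $\ell_0$ times, each step costs a count through a root of size $\ge k+1$, bounded by $W\cdot p^{1-\binom rk}n^{k-r}\cdot p^{-\ell_0}$ at worst. Here I expect the main obstacle: adding edges removes $p$-factors, so an added edge in a clique with a three-edge hole could give $p^{-1}$ losses.

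I would handle this by bounding the flipped counts with extension counts $\operatorname{ext}(S')$ over larger roots $S'$ containing the added edges' vertices. Requirements internal to the root are dropped in $\operatorname{ext}$, so present/absent status inside the root is irrelevant. Hence $b_F(e,j)\le C_{\ell_0}R\,\max_{S'}\operatorname{ext}(S')\,p^{1-\binom rk+\binom{|S'|}k}n^{k-r}$. This reduces to $W$ times $\lambda$-type normalizations, which are $O(\sqrt\gamma)$ by the envelope computation since $\lambda_s\le\mu\gamma$.
\end{itemize}
The result is a load change $O(\ell_0\sqrt\gamma)\le\theta$ and a maximum two-hole count $C_{\ell_0}\sqrt\gamma$.

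The Hessian change is a sum over at most $\ell_0$ flipped edges of rank-type perturbations. Each perturbation is supported on cliques through that edge, with operator norm at most its row sums. These row sums are bounded by $D_j(F)+\sum_ab_F(j,a)\le C$, but that is not small. So instead I would bound the perturbation's norm via a Frobenius-type estimate, using the squared row sums $\mathcal L$, which are $O(\gamma)$. This gives a change of $O(\sqrt\gamma)\le\kappa/8$.

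Finally, \eqref{eq:robust-row-square} follows from the unflipped squared bound. New terms arise from at most $\ell_0$ added edges $a$, each contributing $b_F(e,a)^2\le C\gamma$ by the individual bound. The count increases inside existing terms are handled by the same extension-root comparison. That gives $(x+y)^2\le2x^2+2y^2$ with $\sum y^2$ over $a$ bounded by $C_{\ell_0}\gamma$ via $W^2\le u_*^2$ restricted to the few roots meeting $S$.
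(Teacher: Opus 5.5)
Your treatment of the original sample is essentially the paper's Step~1. Your load, two-hole and squared-row arguments after flips are workable. The ``extension over larger roots'' idea is the paper's $\operatorname{ext}_{J'}(S)\le\operatorname{ext}_J(S)+\operatorname{ext}_J(S\cup j)$, which gives $W(F)\le 2^{\ell_0}W(G_p)$ and $b_F(e,j)\le RW(F)/\mu$.

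Two formulas need correcting, though neither affects the conclusions:
\begin{itemize}
\item $p^{1-\binom{r}{k}}n^{k-r}=1/\mu$, so the comparison is $b(e,j)\le RN_0(e,j)/\mu$, with no extra factor $p$.
\item The factor $p^{\binom{|S'|}{k}}$ in your flipped bound is not justified. Realized indicators internal to $S'$ equal one, not $p$. Since the normalization stays $1/\mu$ and the realized count is dominated by $W\le u_*$, there are no $p^{-1}$ losses to fear.
\end{itemize}
For the squared rows, bound $\sum_a y_a^2\le(\max_a y_a)\sum_a y_a$. Here $\sum_a y_a\le C_{\ell_0}W/\mu$ comes from counting cliques through the at most $\ell_0$ roots $e\cup h$. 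Multiplying the number of affected coordinates $a$ by $(W/\mu)^2$ would not give $O(\gamma)$ in the dense range.

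The genuine gap is the spectral gap after flips. Adding a host edge $j$ is not a small perturbation of $\mathsf H_F(0)$. The Hessian gains a coordinate, and the old matrix padded by a zero row and column has eigenvalue zero. The added clique contribution has $(j,j)$ entry $D_j(F)\approx1$, so neither its operator norm nor its Frobenius norm is $O(\sqrt{\gamma})$. Your Frobenius estimate therefore does not remove the obstruction you correctly identified, and Weyl's inequality cannot be started from $\kappa/4$ in the enlarged space.

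The missing step is to isolate the flipped coordinate.
\begin{itemize}
\item \emph{Old coordinates.} The change there is $\pm\mathsf R_j$, where $\mathsf R_j\succeq0$ collects the cliques through $j$ restricted to their other edges. It has nonnegative entries and row sums $(\binom{r}{k}-1)b_F(e,j)$, so $\|\mathsf R_j\|\le C_{\ell_0}\sqrt{\gamma}$. Your Frobenius bound $\|\mathsf R_j\|_F^2\le C\mathcal L_j(F)$ also works for this piece.
\item \emph{Deletion.} The new Hessian is a principal submatrix of the old one minus $\mathsf R_j$, and Cauchy interlacing makes removing row and column $j$ free.
\item \emph{Addition.} Write the new Hessian in bordered form: block $\mathsf H_F(0)+\mathsf R_j$, border $(b_j)_e=b_F(e,j)$, corner $D_j(F)$. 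Splitting off the border gives $\lambda_{\min}(\mathsf H_{F+j}(0))\ge\min\{\lambda_{\min}(\mathsf H_F(0)),D_j(F)\}-\|b_j\|_2$. Here $\|b_j\|_2^2=\mathcal L_j(F)=O_{\ell_0}(\gamma)$ and $D_j(F)\ge1-2\theta>\kappa/4$.
\end{itemize}
The order-one corner is what supplies the gap in the new direction; only the border has to be small. Applying this one flip at a time, with the local two-hole and squared-row bounds at each intermediate hypergraph, keeps the gap above $\kappa/8$.
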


\begin{proof}
We first establish the estimates at the original sample, then show
deterministically that the local count, load, and spectral bounds
survive the permitted edge changes.

\proofstep{Step 1: Establish the bounds at the original sample.}

Combine the probability estimate following Lemma~\ref{lem:restriction}
with Proposition~\ref{prop:weighted-concentration}, taking
$\eta=\min\{\theta,\kappa/4\}$. This gives the punctured-load bounds
and the gap $\kappa/4$.
Intersect with the events from Lemmas~\ref{lem:root-inputs},
\ref{lem:root-envelope}, and~\ref{lem:root-square}, choosing their failure
exponents large enough to absorb the finite union.

Since $b(e,j)\le R N_0(e,j)/\mu$, the individual bound follows from
Lemma~\ref{lem:root-envelope}. For the first weighted sum, the absent part is at
most $(R/\mu)p\sum_jN_0(e,j)=O(1)$, while the present part follows from
$\sum_{j\ne e}I_jb(e,j)=(\binom{r}{k}-1)D_e(p)$.
For the square sums use $\omega_j\le I_j+p$ and
Lemma~\ref{lem:root-square} directly:
\[
 \sum_{j\ne e}(I_j+p)b(e,j)^2
 \le\frac{R^2}{\mu^2}\sum_{j\ne e}(I_j+p)N_0(e,j)^2
 \le C\gamma_{k,r}.
\]
This proves both squared bounds, including the one over present edges.

Fix a realization in this event. In the remaining argument $p$, $\mu$,
and $\gamma_{k,r}$ stay fixed.

\proofstep{Step 2: Control individual counts and loads after flips.}

For an arbitrary ambient hypergraph $J$, write $\operatorname{ext}_J(S)$ for
its ignored-root extension count. Flipping a $k$-edge $j$ changes no requirements
if $j\subseteq S$. Otherwise each newly created completion contains
$S'=S\cup j$. Dropping all requirements internal to $S'$ gives
$\operatorname{ext}_{J'}(S) \le\operatorname{ext}_J(S)+\operatorname{ext}_J(S')$.
The second term is zero when $|S'|>r$. Hence $W(J')\le2W(J)$.
After at most $\ell_0$ flips the two-hole maximum is at most
$C_{\ell_0}\sqrt{\gamma_{k,r}}$. A punctured load ignores its own terminal indicator,
and a distinct flip satisfies
$|D_e(F')-D_e(F)|\le b_F(e,j)$.
Thus all loads remain $1\pm2\theta$ once $\sqrt{\gamma_{k,r}}$ is small enough.

\proofstep{Step 3: Control squared rows after flips.}

For distinct $e,a,j$ define the three-hole count
\[
 T_F(e,a,j)=p^{1-\binom{r}{k}}
   \sum_{Q\supset e,a,j}\phi(Q)
       \prod_{h\in E(Q)\setminus\{e,a,j\}}I_h(F).
\]
Flipping $j$ changes $b_F(e,a)$ in absolute value by $T_F(e,a,j)$.
Restoring the indicator of $a$ gives the exact counting identity
\[
 \sum_{\substack{a\in E(F)\\a\ne e,j}}T_F(e,a,j)
       =(\binom{r}{k}-2)b_F(e,j).
\]
For addition the common present coordinates are $E(F)$, and for deletion
they are $E(F)\setminus\{j\}$; thus the identity applies to precisely
the common coordinates in both cases. Since all intermediate two-hole
counts are $O_{\ell_0}(\sqrt{\gamma_{k,r}})$, the change in the common-coordinate sum of
squares is $O_{\ell_0}(\gamma_{k,r})$, by $|x^2-y^2|\le(x+y)|x-y|$.
The changed summation coordinate $a=j$ contributes at most another
$O_{\ell_0}(\gamma_{k,r})$. When $e=j$, the punctured two-hole counts and their
summation set are unchanged. Therefore
$|\mathcal L_e(F')-\mathcal L_e(F)|\le C_{\ell_0}\gamma_{k,r}$.
Boundedly many flips therefore preserve
\eqref{eq:robust-row-square}, after increasing $C_{\ell_0}$.

\proofstep{Step 4: Preserve the spectral gap.}

For a present edge $j$, let
$J_{Q,j}=\prod_{a\in E(Q)\setminus\{j\}}I_a(F)$, and let
$\ones_{Q\setminus j}$ indicate $E(Q)\setminus\{j\}$ on the common
coordinate space. Deleting $j$ removes the positive semidefinite matrix
\[
 \mathsf R_j=p^{1-\binom{r}{k}}\sum_{Q\ni j}\phi(Q)J_{Q,j}
          \ones_{Q\setminus j}\ones_{Q\setminus j}^{\T}.
\]
Its row sums are at most $(\binom{r}{k}-1)b_F(e,j)$, so
$\|\mathsf R_j\|\le C_{\ell_0}\sqrt{\gamma_{k,r}}$.
The new Hessian is a principal submatrix of the old one minus $\mathsf R_j$;
deletion costs at most $C_{\ell_0}\sqrt{\gamma_{k,r}}$ in its lower spectral bound.

For addition, order the new coordinate last. The enlarged Hessian is
\[
 \mathsf H_{F+j}(0)=
 \begin{pmatrix}
 \mathsf H_F(0)+\mathsf R_j&b_j\\
 b_j^{\T}&D_j(F)
 \end{pmatrix},\qquad \mathsf R_j\succeq0.
\]
Its border has $(b_j)_e=b_F(e,j)$ on old coordinates. Applying
\eqref{eq:robust-row-square} with terminal $j$ gives
$\|b_j\|_2^2\le C_{\ell_0}\gamma_{k,r}$. Consequently
\[
 \lambda_{\min}(\mathsf H_{F+j}(0))
 \ge\min\{\lambda_{\min}(\mathsf H_F(0)),D_j(F)\}
       -C_{\ell_0}\sqrt{\gamma_{k,r}}.
\]
The new diagonal is at least $1-2\theta>\kappa/4$.
Starting at gap $\kappa/4$, the total loss is at most
$C_{\ell_0}\sqrt{\gamma_{k,r}}<\kappa/8$ for sufficiently large $n$.
All these conclusions hold deterministically on the original event,
simultaneously for every permitted flip sequence. No family bound at
forced neighbors, and no union bound over those neighbors, has been used.
\end{proof}

\section{Local stability and the sum of squared changes}\label{sec:local}
This section proves two results from the raw estimates of
Lemma~\ref{lem:robust}. Proposition~\ref{prop:one-edge} constructs a
nearby exact potential after any one edge change. It bounds the
maximum coordinate change by $O(\sqrt\gamma)$ and the squared Euclidean
change by $O(\gamma)$. Proposition~\ref{prop:square} then fixes an
output coordinate and sums its squared changes over all possible
edge replacements. This sum is also $O(\gamma)$.

Lemma~\ref{lem:response-energy} isolates the deterministic argument
for the second result: a common Hessian transfers the forcing square
sums to the nonlinear responses. Throughout this section a bounded
potential is given; the existence argument is in
Section~\ref{sec:cap-setup}.

Fix a density $q$ and a hypergraph $F\subseteq G$. All normalizations in
this section use $q$, even when edges of $F$ are changed. Write
$\gamma=\gamma_{k,r}(n,q)$.

Fix a constant $C_{\mathrm{raw}}\ge1$ and a tolerance
$0<\theta<1/8$. We say that $F$ satisfies the \emph{local raw bounds}
if, for every $F'\subseteq G$ obtained from $F$ by at most one
host-edge flip,
\begin{equation}\label{eq:local-raw}
 \begin{gathered}
  \max_{e\in E(G)}|D_e(F')-1|\le2\theta,
  \qquad
  \mathsf H_{F'}(0)\succeq\frac{\kappa}{8}I,\\
  \max_{e\in E(G)}\mathcal L_e(F')
       \le C_{\mathrm{raw}}\gamma.
 \end{gathered}
\end{equation}
Here $\mathcal L_e(F')$ is the squared-row sum defined in
\eqref{eq:robust-row-square}, and the Hessian acts on
$\R^{E(F')}$. The density $q$, the normalization, and
$C_{\mathrm{raw}}$ remain fixed throughout this neighborhood.
These bounds concern the weights at zero potential and do not assume
that a stationary potential exists.
In particular, $b_F(e,j)\le\sqrt{C_{\mathrm{raw}}\gamma}$ for
$e\ne j$: use the term indexed by $j$ in $\mathcal L_e(F)$ when
$j$ is present, and in $\mathcal L_e(F+j)$ otherwise, since
$b_{F+j}(e,j)=b_F(e,j)$.

Proposition~\ref{prop:square} additionally requires the two
replacement-family bounds
\begin{equation}\label{eq:replacement-family}
 \max_{e\in E(G)}
 \sum_{\substack{j\in E(G)\\j\ne e}}\omega_jb_F(e,j)
 \le C_{\mathrm{raw}},
 \qquad
 \max_{e\in E(G)}
 \sum_{\substack{j\in E(G)\\j\ne e}}\omega_jb_F(e,j)^2
 \le C_{\mathrm{raw}}\gamma,
\end{equation}
where $\omega_j=q$ for $j\notin E(F)$ and $\omega_j=1-q$
for $j\in E(F)$. These two bounds are required only at $F$,
not at the modified hypergraphs.

Use the objective $\Phi_{F,q}$, the weights $w_Q(v)$ and the
extended potential $\widehat v$ from Section~\ref{sec:potential-setup}.
For an absent edge $e$, write $f_e(v)=\nabla\log s_e(v)$ whenever
the completion sum is positive. The punctured-load bound in \eqref{eq:local-raw} ensures this
positivity throughout the working box. We suppress $q$ in the
objective and write $\Phi_F=\Phi_{F,q}$.

Write $\mathsf D_F(v)=\diag(\mathsf H_F(v))$. If $\norm v_\infty\le r_0$, for a fixed small $r_0>0$, termwise comparison gives
\begin{equation}\label{eq:working-Hessian}
 cI\preceq \mathsf H_F(v)\preceq CI,\qquad
 \mathsf D_F(v)_{ee}\ge c,\qquad
 \max_e\norm{(\mathsf H_F(v)-\mathsf D_F(v))e_e}_2^2\le C\gamma.
\end{equation}
The last bound follows from the squared-row bound in
\eqref{eq:local-raw} and bounded tilting. The same estimates hold
at every one-edge neighbor. At an exact solution the diagonal is $\mathsf D_F(u)=I$, and
$\mathsf H_F(u)\one=\binom{r}{k}\one$. Since the matrix is nonnegative,
its operator norm is $\binom{r}{k}$.

\subsection{Stability under one edge change}
For a host edge $j$, write $F^{(j)}$ for the hypergraph obtained by
flipping its indicator. If $u$ and $u^{(j)}$ are exact potentials,
define their displacement on the original coordinate space by
\[
 \delta^{(j)}
   =(\widehat u^{(j)}-\widehat u)|_{E(F)}.
\]
When $j$ is deleted, its coordinate records the change from its
actual value to its virtual value. Keeping this coordinate will
let every response use the same original Hessian.

\Needspace{9\baselineskip}
\begin{proposition}[One-edge stability]\label{prop:one-edge}
Fix $C_{\mathrm{raw}}\ge1$ and a sufficiently small working radius
$r_0>0$. There are constants $a_0,C>0$, with $a_0<r_0$, such that
the following holds when $\theta,\gamma$ are sufficiently small.
Suppose that $F$ satisfies the local raw bounds
\eqref{eq:local-raw} and has an exact potential $u$ with
$\|u\|_\infty\le a_0$. Then $u$ is the unique exact potential of
$F$, and every $F^{(j)}$ has a unique exact potential $u^{(j)}$
with $\|u^{(j)}\|_\infty\le r_0$. Their displacement and extended
potentials satisfy
\[
 \|\delta^{(j)}\|_2^2\le C\gamma,
 \qquad
 \|\widehat u^{(j)}-\widehat u\|_\infty\le C\sqrt\gamma.
\]
\end{proposition}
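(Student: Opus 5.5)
The argument mirrors Step~1 and Step~2 of the proof of Lemma~\ref{lem:T12}: local strict convexity produces the new solution with an $\ell_2$ bound, and the single-response form of Lemma~\ref{lem:response-energy} upgrades it to an $\ell_\infty$ bound.

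\emph{Uniqueness of $u$.} The local raw bounds give $\mathsf H_F(0)\succeq(\kappa/8)I$. Every positive tilt has the same kernel, as in Lemma~\ref{lem:cap-continuity}. Hence $\Phi_F$ is strictly convex on $\R^{E(F)}$, and any exact potential is unique.

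\emph{Existence of $u^{(j)}$ by convexity.} Consider the flipped objective $\widetilde\Phi=\Phi_{F^{(j)}}$ near the restriction or extension of $u$.
\begin{itemize}
\item \emph{Deletion of $j$.} Take the restriction $u|_{E(F)\setminus\{j\}}$. The gradient of $\widetilde\Phi$ at this point equals minus the load of the removed cliques through $j$. Its coordinate at $e$ is at most a constant times $b_F(e,j)$.
\item \emph{Addition of $j$.} Take the point $(u,\widehat u_j)$. By definition of the virtual coordinate, the $j$-th equation is balanced there. The other coordinates pick up the new cliques through $j$, again of size $O(b_F(e,j))$.
\end{itemize}
In both cases \eqref{eq:robust-row-square} applied to $F$ or $F+j$ gives $\|\nabla\widetilde\Phi\|_2^2\le C\gamma$. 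By \eqref{eq:working-Hessian} at the one-edge neighbour, the Hessian is $\succeq cI$ throughout the box of radius $r_0$. Therefore $\widetilde\Phi(y)-\widetilde\Phi(y_0)\ge -C\sqrt\gamma\|y-y_0\|_2+c\|y-y_0\|_2^2/2$. This is positive on the Euclidean sphere of radius $C_1\sqrt\gamma$. That sphere lies in the box once $a_0+C_1\sqrt\gamma\le r_0$, so there is an interior minimiser, which is the exact potential $u^{(j)}$. Global uniqueness again follows from strict convexity.

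\emph{Displacement energy.} Next convert this to the displacement $\delta^{(j)}$ on $E(F)$. For deletion, the coordinate $j$ changes from $u_j$ to its virtual value. The virtual value is $-\log$ of a completion sum, a smooth function of the other coordinates. Its gradient $f_j$ has $\ell_2$ norm $O(1)$, since the punctured load keeps the sum bounded below and the weights sum to $O(1)$. Hence this coordinate moves by $O(\|\cdot\|_2)=O(\sqrt\gamma)$, and $\|\delta^{(j)}\|_2^2\le C\gamma$ in both cases.

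\emph{The $\ell_\infty$ bound on present coordinates.} Write the flipped equation at $u+\delta$ in the form
\[
\delta=\mathsf H_F(u)^{-1}(g+r).
\]
Here the forcing $g$ consists of the removed or added clique loads, with $|g|\le K f$ and $f_e=b_F(e,j)+\one_{e=j}$. The remainder $r$ is the Taylor remainder, bounded by $C\mathsf B\delta^{\circ2}$ with $\mathsf B=I+\mathsf H_F(u)$. For deletion, coordinate $j$ is handled through the virtual equation, which is exactly the $j$-th row of the original system with $j$ present. This keeps the original Hessian. Now apply the single-response bound \eqref{eq:single-response} with Hessian $\mathsf H_F(u)$. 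Its hypotheses \eqref{eq:energy-Hessian} hold with $\gamma$ by \eqref{eq:working-Hessian}, and $\|\delta\|_2^2\le C\gamma\le c_K$. We have $\|f\|_\infty^2\le C\gamma$ (the two-hole maximum plus the unit at $j$; the unit entry only contributes $O(1)$ to a coordinate whose change we separately know is $O(\sqrt\gamma)$). Also $\|f\|_2^2\le C\gamma+1$. So \eqref{eq:single-response} gives $\|\delta\|_\infty^2\le C(\gamma+\gamma(1+\gamma))=O(\gamma)$.

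\emph{Virtual coordinates and the main obstacle.} The remaining step is to bound $\widehat u^{(j)}-\widehat u$ at absent edges $e\ne j$. The virtual value is $\log$ of a completion sum. Its change is at most a weighted average of the side-coordinate changes, $O(\|\delta\|_\infty)$, plus the relative change of the completion set through $j$, which is $O(b_F(e,j))=O(\sqrt\gamma)$. I expect the main obstacle to be the bookkeeping that lets the unit forcing at $j$ and the virtual coordinate at a deleted edge be written against the \emph{original} Hessian with a remainder of the required envelope form. The key is using the virtual equation as a genuine row of the original system.
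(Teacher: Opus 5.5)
Your proposal is correct and follows the paper's route: strict convexity for uniqueness, the gradient $\pm f_j$ at the restricted or extended starting point bounded through two-hole counts and the squared row with terminal $j$, minimization over a Euclidean ball of radius $O(\sqrt\gamma)$, Lipschitz control of the virtual value at $j$, and the common/changed completion split for the other virtual coordinates. Two small repairs are needed. First, the appended coordinate $\widehat u_j$ can be as large as $(m-1)a_0+4\theta$ with $m=\binom{r}{k}$, so you need $a_0\lesssim r_0/m$ and $\theta\lesssim r_0$ rather than only $a_0+C_1\sqrt\gamma\le r_0$. Second, your single-response $\ell_\infty$ paragraph should be dropped: it is inconsistent as written, since a unit entry would give $\|f\|_\infty\ge1$ and make \eqref{eq:single-response} useless, and in fact the forcing vanishes at $j$ because that row is balanced exactly; it is also unnecessary, because $\|\delta^{(j)}\|_\infty\le\|\delta^{(j)}\|_2\le C\sqrt\gamma$ is already the claimed bound, and this is all the paper uses.
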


\begin{proof}
We first bound the load error caused by one edge flip. Minimizing
the new objective in a small Euclidean ball gives a nearby exact
potential. We then compare its virtual coordinates to those of
the original solution.

\proofstep{Step 1: Bound the error caused by a flip.}

Put $m=\binom{r}{k}$, and let $F_-$ be the smaller of $F$ and
$F^{(j)}$. For a potential $v$ on their common edge set $E(F_-)$,
write
\[
 s_j(v)=\sum_{\substack{Q\in\cK_r(G),\ j\in E(Q)\\
              E(Q)\setminus\{j\}\subseteq E(F_-)}}
           \phi(Q)e^{v(E(Q)\setminus\{j\})},
 \qquad t_j(v)=\log\frac{q^{m-1}}{s_j(v)}.
\]
The actual coordinate of $j$ in any exact potential on $F_-+j$
equals $t_j(v)$; this is also its virtual coordinate on $F_-$.
Let $z_Q$ be the incidence vector of $E(Q)\setminus\{j\}$ on
the common edge set. Then
\[
 f_j(v):=\nabla\log s_j(v)
     =\sum_Q\pi_Q(v)z_Q,\qquad
 \pi_Q(v)=\frac{\phi(Q)e^{v(E(Q)\setminus\{j\})}}{s_j(v)}.
\]
The probabilities $\pi_Q$ give $\|f_j(v)\|_1=m-1$.
Throughout the working box, the denominator is comparable to
$q^{m-1}$ by the punctured-load bound. The numerator of coordinate
$a$ is bounded by the corresponding tilted two-hole count.
Since that count omits the indicator of $j$, it is the same for
$F$ and $F_-$. Consequently the local raw bounds give
\begin{equation}\label{eq:forcing-bounds}
 (f_j(v))_a\le Cb_F(a,j),\qquad
 \|f_j(v)\|_1=m-1,\qquad
 \|f_j(v)\|_2^2\le C\gamma.
\end{equation}
The last inequality uses the squared row with terminal $j$ and
common present edges. These estimates hold throughout the working
box, so they also apply along the segments used below.

Let $v$ now be the common-coordinate restriction of the original
potential $u$. Form a starting vector $\widetilde u$ for the new
objective $\Phi_{F^{(j)}}$: restrict $u$ when deleting $j$, and
append $t_j(v)$ when adding it. The latter value is at most
$(m-1)a_0+4\theta$ in absolute value. Taking
$a_0\le r_0/(8m)$ and $\theta\le r_0/32$ ensures
$\|\widetilde u\|_\infty\le r_0/4$.
These choices depend only on $m,r_0$; the subsequent smallness
threshold for $\gamma$ may also depend on $C_{\mathrm{raw}}$.

The new or removed clique weights through $j$ sum to one and have
side-edge load vector $f_j(v)$. Since
$\widetilde u=\widehat u|_{E(F^{(j)})}$, this gives the identity
\begin{equation}\label{eq:flip-gradient}
 \nabla\Phi_{F^{(j)}}
       \bigl(\widehat u|_{E(F^{(j)})}\bigr)
   =\varepsilon_j f_j(v),\qquad
 \varepsilon_j=
 \begin{cases}
  1,&j\notin E(F),\\
  -1,&j\in E(F),
 \end{cases}
\end{equation}
where the forcing is extended by zero at the added coordinate. In particular,
the new gradient has Euclidean norm $O(\sqrt\gamma)$.
We will use the same identity with the two hypergraphs exchanged.

\proofstep{Step 2: Construct the replacement.}

The local raw bounds give
$\nabla^2\Phi_{F^{(j)}}\succeq cI$ throughout the working box.
Minimize this objective on the Euclidean ball of radius
$L\sqrt\gamma$ about $\widetilde u$, where $L$ is a sufficiently
large fixed constant. For small enough $\gamma$, the ball lies
inside the working box. On its boundary,
\[
 \Phi_{F^{(j)}}(\widetilde u+h)-\Phi_{F^{(j)}}(\widetilde u)
 \ge c\|h\|_2^2-C\sqrt\gamma\|h\|_2>0.
\]
The minimizer $u^{(j)}$ is therefore interior and stationary, with
$\|u^{(j)}-\widetilde u\|_2\le L\sqrt\gamma$.
Let $v'$ be its restriction to the common edge set.
Since $\|\nabla t_j\|_1=m-1$, the mean-value bound gives
\[
 |t_j(v')-t_j(v)|\le(m-1)\|v'-v\|_\infty\le C\sqrt\gamma.
\]
Thus the displacement $\delta=\delta^{(j)}$ on the original
coordinate space has $\|\delta\|_2^2\le C\gamma$, including
the virtual coordinate when $j$ is deleted. This also bounds
$\|\delta\|_\infty$ and the flipped edge's own extended-coordinate
change by $C\sqrt\gamma$.

Positive tilts preserve the supported clique Gram-matrix kernel,
so the raw gap makes every full objective strictly convex at
every finite potential. Its exact potential is globally unique.

\proofstep{Step 3: Compare the remaining virtual coordinates.}

The extension in \eqref{eq:virtual-definition} satisfies, for
every host edge $e$,
\begin{equation}\label{eq:all-host-equation}
 q^{1-m}\sum_{Q\supset e}\phi(Q)J_{Q,e}
             e^{\widehat u(E(Q))}=1,\qquad
 J_{Q,e}=\prod_{j\in E(Q)\setminus\{e\}}I_j.
\end{equation}
For an originally absent edge $e$, put $f_e=f_e(u)$ at the original
potential. For a flip $j\ne e$, split its completion sums into
common and changed families. On common
completions, all side edges remain present. Bounded tilting and
the mean-value bound for the logarithm give a contribution at
most $C\sum_a(f_e)_a|\delta_a^{(j)}|$. Every changed completion
contains both $e$ and $j$, and its normalized contribution is at
most $Cb_F(e,j)$. Thus
\begin{equation}\label{eq:virtual-bound}
 |\widehat u_e^{(j)}-\widehat u_e|
 \le C\sum_{a\in E(F)}(f_e)_a|\delta_a^{(j)}|
                  +Cb_F(e,j).
\end{equation}
For deletion the summation includes the nonnegative term at $j$;
common completions do not use that edge, and removed completions
are already covered by the last term. Since $\|f_e\|_1=m-1$,
the bound is $O(\sqrt\gamma)$. Originally present coordinates are
covered by $\delta^{(j)}$, and the own flip was bounded in Step~2. Therefore
$\|\widehat u^{(j)}-\widehat u\|_\infty\le C\sqrt\gamma$.
\end{proof}

\subsection{The sum of squared replacement changes}
Proposition~\ref{prop:one-edge} constructs each replacement and bounds
its displacement energy. We now apply the deterministic response
estimate, Lemma~\ref{lem:response-energy}, with the original Hessian
$\mathsf H=\mathsf H_F(u)$ and Taylor envelope $\mathsf B=\mathsf H$.
For each host edge $j$, let $f_j$ be the completion forcing at the
original common-coordinate potential. For a present $j$, use the
completion function on $F-j$ and extend the vector by zero at $j$.
As in \eqref{eq:replacement-family}, the replacement weights are
$\omega_j=q$ for absent edges and $1-q$ for present edges.

\begin{proposition}[Replacement square sums]\label{prop:square}
Under the hypotheses of Proposition~\ref{prop:one-edge}, assume also
that the replacement-family bounds \eqref{eq:replacement-family}
hold at $F$. For sufficiently small $\gamma$, we have
\[
 \max_{a\in E(F)}\sum_j\omega_j(\delta_a^{(j)})^2\le C\gamma,
 \qquad
 \sup_{a\in E(G)}\sum_j\omega_j(\widehat u_a^{(j)}-\widehat u_a)^2
 \le C\gamma.
\]
\end{proposition}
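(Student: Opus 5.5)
We apply Lemma~\ref{lem:response-energy} with $\mathsf H=\mathsf H_F(u)$, envelope $\mathsf B=\mathsf H$, weights $\omega_j$, and forcing vectors $f_j$ (the completion forcing at the original common-coordinate potential, extended by zero at $j$ when $j$ is present), mirroring Step~3 of the proof of Lemma~\ref{lem:T12}. The first claim is the lemma's conclusion for present coordinates. The second follows by summing the virtual-coordinate bound \eqref{eq:virtual-bound} over replacements.

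\emph{Response representation.} Put $\delta=\delta^{(j)}$. Evaluate the new gradient at $\widehat u^{(j)}|_{E(F^{(j)})}$, which is zero, and compare it with $\nabla\Phi_{F}(u)=0$ on the original coordinate space. Identity \eqref{eq:flip-gradient}, applied with the two hypergraphs exchanged, should give
\[
 \mathsf H\delta=g_j+r_j,
 \qquad |g_j|\le Cf_j,
 \qquad |r_j|\le C\mathsf H\,\delta^{\circ2}.
\]
Here $g_j=\pm f_j(\cdot)$, with the side-edge load vector evaluated at the common potential. By bounded tilting, $f_j(v')\le Cf_j(v)$ coordinatewise, since each coordinate is controlled by the same tilted two-hole count $b_F(a,j)$. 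The remainder $r_j$ is the Taylor error of $\nabla\Phi_F$, bounded termwise by $\sum_Q w_Q(\delta(E(Q)))^2\ones_Q\le C\,\mathsf H\delta^{\circ2}$ via Cauchy--Schwarz on the $m$ coordinates of $Q$.

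For deletion we keep the virtual coordinate at $j$. The edge equation \eqref{eq:all-host-equation} holds at $j$ for both solutions, so the original Hessian row at $j$ applies there too, and the deleted-clique contributions are again absorbed into $f_j$. For addition the new coordinate is dropped. Its effect on old coordinates is the forcing from new cliques, bounded by $Cf_j$ coordinatewise using $|t_j(v')|\le Ca_0$.

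\emph{Verifying the hypotheses.} By \eqref{eq:working-Hessian}, $\mathsf H$ satisfies \eqref{eq:energy-Hessian} with this $\gamma$, and $\|\mathsf H\|=m$. Proposition~\ref{prop:one-edge} gives the energy bound $\|\delta^{(j)}\|_2^2\le C\gamma\le c_K$ for small $\gamma$. For the forcing matrix $A_f=\sum_j\omega_jf_jf_j^{\T}$ we need
\[
 (A_f)_{aa}\le C\sum_j\omega_j b_F(a,j)^2\le C\gamma,
\]
which follows from \eqref{eq:replacement-family}. For $\|A_f\|$ we bound row sums:
\[
 \sum_{a'}(A_f)_{aa'}=\sum_j\omega_j(f_j)_a\|f_j\|_1\le (m-1)C\sum_j\omega_jb_F(a,j)\le C,
\]
again by \eqref{eq:replacement-family}. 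Lemma~\ref{lem:response-energy} then gives $\max_a\sum_j\omega_j(\delta^{(j)}_a)^2\le C(\gamma+\gamma C)=O(\gamma)$.

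\emph{Virtual coordinates.} For an originally absent edge $e$, square \eqref{eq:virtual-bound} and sum with weights $\omega_j$. Since $f_e$ has $\ell_1$ norm $m-1$, Cauchy--Schwarz gives
\[
 \Bigl(\sum_a(f_e)_a|\delta^{(j)}_a|\Bigr)^2\le (m-1)\sum_a(f_e)_a(\delta^{(j)}_a)^2 .
\]
Summing over $j$ yields $(m-1)\sum_a(f_e)_a\max_a\sum_j\omega_j(\delta_a^{(j)})^2=O(\gamma)$. The term $\sum_j\omega_jb_F(e,j)^2\le C\gamma$ is \eqref{eq:replacement-family} again. The self-flip $j=e$ contributes $O(\gamma)$ by Proposition~\ref{prop:one-edge}.

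\emph{Main obstacle.} The delicate step is the exact response representation for the deleted-edge case. We must check that the virtual coordinate at $j$ really satisfies the original Hessian equation with forcing dominated by $f_j$, and that changed completions through $j$, whose side edges need not be present, are still dominated by $b_F(\cdot,j)$. I would handle this first by writing both stationarity systems on the common host-edge equations \eqref{eq:all-host-equation} restricted to $E(F)$. Then I would take differences clique by clique, separating common cliques (Taylor part) from cliques containing $j$ (forcing part).
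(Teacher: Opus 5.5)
Your proposal is correct and follows the paper's proof essentially step for step. The exchanged form of \eqref{eq:flip-gradient} gives $\nabla\Phi_F(u+\delta^{(j)})=-\varepsilon_jf_j(v')$, exact Taylor expansion in the original objective gives the response representation with $\mathsf B=\mathsf H$, the replacement-family bounds give $\max_a(A_f)_{aa}\le C\gamma$ and $\|A_f\|\le C$, and Cauchy--Schwarz on \eqref{eq:virtual-bound} handles the virtual coordinates. The deletion case you flag as the main obstacle needs nothing beyond that exchanged identity: the virtual coordinate at $j$ balances edge $j$ exactly, and the removed cliques through $j$ have side-edge load vector exactly $f_j(v')$.
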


\begin{proof}
We verify the hypotheses of Lemma~\ref{lem:response-energy} on the
original present edges, and then include the remaining host edges.

\proofstep{Step 1: Bound the Hessian and the forcing family.}

The Hessian bounds \eqref{eq:energy-Hessian} follow from
\eqref{eq:working-Hessian}. For $A_f=\sum_j\omega_jf_jf_j^{\T}$,
bounded tilting and \eqref{eq:replacement-family} give
\[
 \max_a\sum_j\omega_j(f_j)_a\le C,
 \qquad \max_a(A_f)_{aa}\le C\gamma.
\]
Since $\|f_j\|_1=\binom{r}{k}-1$, the row sums of the symmetric
nonnegative matrix $A_f$ are bounded by a constant. Thus
$\|A_f\|\le C$. These are the two forcing-moment bounds needed
in Lemma~\ref{lem:response-energy}.
Proposition~\ref{prop:one-edge} supplies the responses and the energy
bound $E:=\max_j\|\delta^{(j)}\|_2^2\le C\gamma$.

\proofstep{Step 2: Apply the common response estimate.}

Fix a flip $j$ and put $\delta=\delta^{(j)}$. Let $v,v'$ be the
restrictions of $u,u^{(j)}$ to their common edge set, as in the
proof of Proposition~\ref{prop:one-edge}, and let $\varepsilon_j$
be the sign in \eqref{eq:flip-gradient}. Since
$u+\delta=\widehat u^{(j)}|_{E(F)}$, applying
\eqref{eq:flip-gradient} with the two hypergraphs exchanged gives
$\nabla\Phi_F(u+\delta)=-\varepsilon_j f_j(v')$.
Taylor expansion in the original objective therefore gives the
same response formula for both flips:
\begin{equation}\label{eq:clique-local-response}
 \delta=\mathsf H^{-1}(g_j-\mathfrak r_j),\qquad
 g_j=-\varepsilon_j f_j(v'),\qquad |g_j|\le C f_j(v).
\end{equation}
Here $f_j(v')$ and $f_j(v)$ are extended by zero at a deleted $j$.
The forcing bound follows by bounded tilting on the common coordinates.
The Taylor remainder is taken in the \emph{original} objective:
\[
 (\mathfrak r_j)_a
 =\sum_{\substack{Q\in\cK_r(F)\\a\in E(Q)}}w_Q(u)
       \bigl(e^{\delta(E(Q))}-1-\delta(E(Q))\bigr).
\]
Since $0\le e^x-1-x\le Cx^2$ on the working range and
$\delta(E(Q))^2\le\binom{r}{k}\sum_{b\in E(Q)}\delta_b^2$,
\begin{equation}\label{eq:remainder-majorization}
 0\le\mathfrak r_j\le C\mathsf H\delta^{\circ2}.
\end{equation}

Equation~\eqref{eq:clique-local-response} uses the same original
Hessian $\mathsf H$ for every flip. Its forcing satisfies
$|g_j|\le C f_j$, and \eqref{eq:remainder-majorization} gives
$|r_j|\le C\mathsf H(\delta^{(j)})^{\circ2}$ with
$r_j=-\mathfrak r_j$. Lemma~\ref{lem:response-energy} therefore
applies with $\mathsf B=\mathsf H$ and, for sufficiently small
$\gamma$, gives
\[
 \mathcal V:=\max_{a\in E(F)}\sum_j\omega_j(\delta_a^{(j)})^2
 \le C\gamma.
\]

\proofstep{Step 3: Include the virtual coordinates.}

For an originally present output edge $e$, every replacement
is already covered by $\mathcal V$. For an originally absent $e$,
\eqref{eq:virtual-bound} and $\|f_e\|_1=\binom{r}{k}-1$ give
\[
 \begin{aligned}
 \sum_{j\ne e}\omega_j
   \Bigl(\sum_a(f_e)_a|\delta_a^{(j)}|\Bigr)^2
 &\le\Bigl(\binom{r}{k}-1\Bigr)
       \sum_a(f_e)_a\sum_j\omega_j(\delta_a^{(j)})^2\\
 &\le\Bigl(\binom{r}{k}-1\Bigr)^2\mathcal V.
 \end{aligned}
\]
The direct completion change has squared sum at most
$C\sum_{j\ne e}\omega_jb_F(e,j)^2\le C\gamma$ by
\eqref{eq:replacement-family}. For an originally absent $e$, its own addition contributes
$O(\gamma)$ by Proposition~\ref{prop:one-edge}.
This proves the second bound.
\end{proof}

\section{Concentration and the population mean}\label{sec:bootstrap}

We verify the cap improvement in Proposition~\ref{prop:bootstrap-inputs}.
Retain $G(q)$, $\tau_n$, $Z(q)$, $\bar Z(q)$ and $Y(q)$ from
Section~\ref{sec:cap-setup}. The proof has two parts: concentration
from Proposition~\ref{prop:square}, followed by the population mean
estimate.

\subsection{Concentration of the cap and cutoff coordinates}\label{sec:clique-concentration}
We apply the common rare-exception inequality from
Section~\ref{sec:rare-exceptions}, and in particular
Corollary~\ref{cor:poly-tail} with $h=\log n$ and $a=k$, on the
original product space.
No conditioning on the raw event is used.

\noindent\textit{Choice of constants.} For the fixed cap $\tau_n$,
$\gamma_{k,r}(n,q)\le\gamma_{k,r}(n,p)$ gives
$\frac{C\gamma_{k,r}(n,q)}{\tau_n^2}\log n\le C/C_{\mathrm{cap}}^2$.
First choose the working radius $r_0$ and a small raw-load tolerance
$\theta$. Choose a coordinate-tail exponent $D_Y>b+k+5$.
Next choose a cap-tail exponent $D_Z>b+k+5$ larger than both the
exceptional exponent required by Corollary~\ref{cor:poly-tail} for
the coordinate tails at deviation $1/32$ and $2(r-k)+k+10$.
Choose the raw exceptional exponent larger than the one required
for the cap tails at deviation $1/6$ and $D_Z+2$.

We use the raw event $\mathcal R(q)$ supplied by
Lemma~\ref{lem:robust}, with these parameters and robustness under
$\ell_0=\binom{r}{k}$ host-edge flips. Fix $C_{\mathrm{raw}}$
large enough to dominate all the local and family constants in that
lemma. This choice ensures that every hypergraph reached after at
most $\binom{r}{k}-1$ flips satisfies \eqref{eq:local-raw}: its
entire one-flip neighborhood lies within $\binom{r}{k}$ flips of
the original hypergraph. The replacement-family bounds
\eqref{eq:replacement-family} are used only at the original
hypergraph.

With these fixed raw bounds, let $a_0$ and the response constants be
as in Proposition~\ref{prop:one-edge}. Increase $C_{\mathrm{cap}}$
for all normalized-proxy bounds, and then increase
$C_{\mathrm{ext}}$ and $n_0$ until
$\tau_n\ll\min\{a_0,r_0/(8\binom{r}{k})\}$ and all required
smallness conditions hold. These choices depend only on
$k,r,R,\kappa,b$. Uniformly in $q\ge p$, monotonicity gives
\[
 \frac{\sqrt{\gamma_{k,r}(n,q)}}{\tau_n}
 \le\frac1{C_{\mathrm{cap}}\sqrt{\log n}}=o(1),
 \qquad \gamma_{k,r}(n,q)\ge n^{-(r-k)}.
\] All these parameters are fixed
for the remainder of this section.

Put $\cD(q)=\mathcal R(q)\cap\{Z(q)<\tau_n/2\}$.
The cap proxy will be controlled on $\mathcal R(q)$; the cutoff
coordinate proxies require the smaller event $\cD(q)$.

\begin{lemma}\label{lem:cap-concentration}
With the choices of constants at the start of
Section~\ref{sec:clique-concentration}, for every deterministic
$q\in[p,1]$ the bound $\mathcal V_+(Z(q))\le C\gamma_{k,r}(n,q)$ holds
on $\mathcal R(q)$. If $\bar Z(q)\le \tau_n/3$, then
$\PP(Z(q)\ge \tau_n/2)\le n^{-D_Z}$ and
$\Var Y_e(q)\le C\gamma_{k,r}(n,q)$ for every host edge $e$.
Moreover,
\[
 \max_{e\in E(G)}\PP\bigl(|Y_e(q)-\E Y_e(q)|>\tau_n/32\bigr)
 \le2n^{-D_Y}.
\]
\end{lemma}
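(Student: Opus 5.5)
The proof follows Step~1 of the proof of Theorem~\ref{thm:T11}, with edge indicators in place of columns and $\gamma=\gamma_{k,r}(n,q)$ in place of $\beta$. The independent inputs are the uniforms $U_e$, or equivalently the indicators of $G(q)$. Resampling input $j$ flips $j$ with probability $\omega_j$, so $\mathcal V_+(X)=\sum_j\omega_j(X-X^{(j)})_+^2$.

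\textbf{Cap proxy on $\mathcal R(q)$.} I split into three cases, as in the model.

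If $Z(q)<\tau_n$, there is an exact potential $u$ with $\|\widehat u\|_\infty<\tau_n\le a_0$. Lemma~\ref{lem:robust} with $\ell_0=\binom rk$ supplies the local raw bounds \eqref{eq:local-raw} at $G(q)$ and the family bounds \eqref{eq:replacement-family}. Proposition~\ref{prop:one-edge} then gives every flipped hypergraph a unique exact potential with $\|\widehat u^{(j)}-\widehat u\|_\infty\le C\sqrt\gamma$, so the flipped cap is computed without truncation effects of the wrong sign. Fix a signed coordinate $a$ attaining $\|\widehat u\|_\infty$. Any positive decrement of $Z$ is at most $|\widehat u_a^{(j)}-\widehat u_a|$. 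Proposition~\ref{prop:square} then gives $\mathcal V_+(Z)\le C\gamma$.

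If $Z(q)=\tau_n$ and some flip lowers the cap, the flipped hypergraph $F'$ has an exact potential of norm below $\tau_n$. $F'$ lies one flip from $G(q)$, so it satisfies \eqref{eq:local-raw}. Applying Proposition~\ref{prop:one-edge} in reverse from $F'$ gives $G(q)$ an exact potential of norm at most $\tau_n+C\sqrt\gamma\le a_0$. By uniqueness this potential does not depend on which lowering flip was used. The original cap is then either this potential's norm or truncated. In both cases we are back in the previous argument with an untruncated bounded solution, and the maximizing signed coordinate again dominates all decrements.

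If no flip lowers the cap, then $\mathcal V_+(Z)=0$.

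\textbf{Global bounds and exceptional events.} Globally, $\mathcal V_+\le\tau_n^2|E(G)|\le n^k$. By Lemma~\ref{lem:robust}, $\PP(\mathcal R(q)^c)\le n^{-D_R}$.

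\textbf{Tail bound for the cap.} Apply Corollary~\ref{cor:poly-tail} to $X=Z/\tau_n$ with $h=\log n$, $a=k$ and $z=1/6$. The key condition is $vh=C\gamma\log n/\tau_n^2\le C/C_{\mathrm{cap}}^2\le\epsilon_0$, which holds once $C_{\mathrm{cap}}$ is large. This gives $\PP(Z-\bar Z\ge\tau_n/6)\le n^{-D_Z}$. Since $\bar Z\le\tau_n/3$, this yields $\PP(Z\ge\tau_n/2)\le n^{-D_Z}$.

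\textbf{Coordinate proxies.} On $\cD(q)$, every flip keeps the cap below $\tau_n/2+C\sqrt\gamma<\tau_n$. So $Y=\widehat u$ at both the original and the flipped samples, and Proposition~\ref{prop:square} bounds both signed proxies of $Y_e$ by $C\gamma$. Globally these proxies are at most $4\tau_n^2$ times the number of inputs. Also $\PP(\cD^c)\le 2n^{-D_Z}$.

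Lemma~\ref{lem:rare-exception} then gives $\Var Y_e\le C\gamma+4\tau_n^2n^k\cdot2n^{-D_Z}\le C\gamma$. Here I use $\gamma\ge n^{-(r-k)}$, $\tau_n\le1$, and $D_Z>2(r-k)+k+10$. Corollary~\ref{cor:poly-tail} applied to $\pm Y_e/\tau_n$ with deviation $1/32$ gives each one-sided tail at most $n^{-D_Y}$, which yields the stated two-sided bound.

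\textbf{Main obstacle.} The delicate step is the case $Z=\tau_n$ in the cap proxy. There I must check that the reversed application of Proposition~\ref{prop:one-edge} is legitimate: $F'$ must satisfy \eqref{eq:local-raw} (this is why $\ell_0=\binom rk$ flips of robustness are needed), and the reversed bound $\tau_n+C\sqrt\gamma$ must stay below $a_0$. I must also check that Proposition~\ref{prop:square}, whose family bounds are stated only at the original $G(q)$, is applied at the original sample and not at $F'$.
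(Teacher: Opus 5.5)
Your proposal is correct and follows the paper's proof essentially step for step. It uses the signed-maximum comparison on $\mathcal R(q)$, with the reverse application of Proposition~\ref{prop:one-edge} from a lowering neighbor when $Z=\tau_n$, then Corollary~\ref{cor:poly-tail} for the cap, and on $\cD(q)$ Proposition~\ref{prop:square} together with Lemma~\ref{lem:rare-exception} and Corollary~\ref{cor:poly-tail} for the cutoff coordinates. The paper's proof additionally notes that on $\cD(q)$ up to $\binom{r}{k}-1$ forced flips stay below the cutoff; the lemma does not need this, but Proposition~\ref{prop:mean-improvement} reuses it.
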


\begin{proof}
Fix a deterministic $q$ and suppress it in $Z$, $Y$ and $\cD$.

\par\addvspace{3pt}\Needspace{2\baselineskip}%
\noindent\textit{Step 1: Bound the square proxy of the cap.}

Work on $\mathcal R(q)$ and use the signed-maximum comparison from
Step~1 of the model proof in Section~\ref{sec:model-concentration}.
If $Z<\tau_n$, a coordinate attaining the original signed maximum
bounds every positive cap decrement. Proposition~\ref{prop:square}
therefore gives $\mathcal V_+(Z)\le C\gamma_{k,r}(n,q)$.

If $Z=\tau_n$ and no flip lowers it, the proxy is zero. Otherwise a lowering neighbor has a small exact potential. Apply Proposition~\ref{prop:one-edge} from that neighbor back to the original hypergraph; Proposition~\ref{prop:one-edge} bounds its extended norm by $\tau_n+C\sqrt{\gamma_{k,r}(n,q)}$. Uniqueness identifies it with the one in the cap definition. Its maximizing signed coordinate is at least $\tau_n$ and again bounds every positive cap decrement. The original raw event supplies \eqref{eq:local-raw} and
\eqref{eq:replacement-family} at the original hypergraph, so
Proposition~\ref{prop:square} gives the same proxy estimate.
This reverse application uses only the local bounds \eqref{eq:local-raw}
at the lowering neighbor; it does not require the replacement-family
bounds there. These local bounds follow from the flip-count argument
at the start of this subsection. Globally, $\mathcal V_+(Z)\le \tau_n^2n^k$. This part does not assume $\bar Z(q)\le \tau_n/3$.

\proofstep{Step 2: Concentrate the cap and cutoff coordinates.} 

Now assume $\bar Z(q)\le \tau_n/3$. Apply Corollary~\ref{cor:poly-tail} to $Z/\tau_n$ at deviation $1/6$. The normalized good-event proxy is at most $C\gamma_{k,r}(n,q)/\tau_n^2$, and its product with $\log n$ is at most $C/C_{\mathrm{cap}}^2$. The choices above therefore give $\PP(Z\ge \tau_n/2)\le n^{-D_Z}$.

On $\cD$, every hypergraph reached by at most
$\binom{r}{k}-1$ forced flips satisfies \eqref{eq:local-raw}.
Starting from the original potential, whose extended norm is less
than $\tau_n/2$, repeated applications of
Proposition~\ref{prop:one-edge} change that norm by at most
$C\sqrt{\gamma_{k,r}(n,q)}=o(\tau_n)$ in total. All these potentials
therefore remain below the common cap and cutoff $\tau_n$ and,
by its choice, inside the initial subbox required at each step.

In particular, $Y$ and every one-input replacement agree with the corresponding extended exact potentials there. Proposition~\ref{prop:square} gives both signed proxies of each $Y_e$ at most $C\gamma_{k,r}(n,q)$ on $\cD$. Globally they are at most $4\tau_n^2n^k$.

The complement of $\cD$ has probability at most $n^{-D_Z}+\PP(\mathcal R(q)^c)$. By the exponent hierarchy, this is small enough for both applications of Lemma~\ref{lem:rare-exception}. Its variance bound gives
$\Var Y_e\le C\gamma_{k,r}(n,q)+4\tau_n^2n^k\PP(\cD^c) \le C\gamma_{k,r}(n,q)$,
where $\gamma_{k,r}(n,q)\ge n^{-(r-k)}$ absorbs the polynomial error. Applying Corollary~\ref{cor:poly-tail} to $\pm Y_e/\tau_n$ at deviation $1/32$ gives the claimed coordinate-tail bound. All comparisons hold with constants independent of the chosen deterministic $q$.
\end{proof}

\subsection{The mean estimate and completion of the proof}\label{sec:clique-bootstrap}

The remaining task is to locate the mean of the cutoff potential.
Resampling a clique's side edges gives a small population error;
the population inverse then bounds the mean coordinatewise.

\begin{proposition}\label{prop:mean-improvement}
With the choices of constants at the start of
Section~\ref{sec:clique-concentration}, let $q\in[p,1]$ be deterministic. If $\bar Z(q)\le \tau_n/3$, then
$\|\E Y(q)\|_\infty\le C\sqrt{\gamma_{k,r}(n,q)}$ and $\bar Z(q)\le \tau_n/8$.
\end{proposition}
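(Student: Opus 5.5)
We follow Step~2 and Step~3 of the model proof, with edge indicators in place of columns and Corollary~\ref{cor:population-inverse} in place of the logarithmic Laplacian inverse. Fix $q$, write $\gamma=\gamma_{k,r}(n,q)$, $Y=Y(q)$ and $m_Y=\E Y\in\R^{E(G)}$. The edge equation is written on the whole host-edge space through the identity \eqref{eq:all-host-equation}. The empirical map is
\[
 g_{\mathrm{emp}}(y)_e=q^{1-m}\sum_{Q\ni e}\phi(Q)J_{Q,e}e^{y(E(Q))}-1,
\]
where $J_{Q,e}$ is the side-edge product. Its population version is $g_{\mathrm{pop}}(y)=\E g_{\mathrm{emp}}(y)=g_\phi(y)$ at the normalization $q$, because $\E J_{Q,e}=q^{m-1}$. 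As in \eqref{eq:mean-error-decomposition}, we split
\[
 g_\phi(m_Y)=\E g_{\mathrm{emp}}(Y)+\E[g_\phi(Y)-g_{\mathrm{emp}}(Y)]+\bigl(g_\phi(m_Y)-\E g_\phi(Y)\bigr).
\]
We then bound each term in the maximum norm by $C\sqrt\gamma$.

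The first term is the cutoff term. It vanishes when $Z<\tau_n$, since the extended exact potential satisfies every host edge equation. On $\{Z=\tau_n\}$ we have $Y=0$, and $|g_{\mathrm{emp}}(0)_e|\le q^{1-m}n^{r-k}R+1\le n^{C}$. Lemma~\ref{lem:cap-concentration} gives $\PP(Z\ge\tau_n/2)\le n^{-D_Z}$, and $D_Z>2(r-k)+k+10$ was chosen so that this contribution is $o(\sqrt\gamma)$.

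The third term is the Taylor term. Expand $g_\phi$ about $m_Y$. The linear term has zero mean. The second derivative of $e^{y(E(Q))}$ is bounded by $C\sum_{b\in E(Q)}h_b^2$ while $\|y\|_\infty\le\tau_n$. Balance of $\phi$ then gives a coordinate bound
\[
 C\sum_{Q\ni e}\phi(Q)\sum_{b\in E(Q)}\Var Y_b\le C\gamma
\]
by Lemma~\ref{lem:cap-concentration}.

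The second term is the resampling term, and it is the main obstacle. For each clique $Q\ni e$, let $Y^{[Q,e]}$ be recomputed after independently resampling all side edges $E(Q)\setminus\{e\}$. Then $Y^{[Q,e]}$ is independent of $J_{Q,e}$ and has the law of $Y$, so
\[
 \E\bigl[J_{Q,e}e^{Y^{[Q,e]}(E(Q))}\bigr]=q^{m-1}\E e^{Y(E(Q))}.
\]
Hence the second term equals
\[
 q^{1-m}\sum_{Q\ni e}\phi(Q)\,\E\Bigl[J_{Q,e}\bigl(e^{Y^{[Q,e]}(E(Q))}-e^{Y(E(Q))}\bigr)\Bigr].
\]
On the event $\cD$, at most $m-1$ forced flips connect the two samples. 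Proposition~\ref{prop:one-edge}, applied repeatedly along this path as in Step~2 of Lemma~\ref{lem:cap-concentration}, bounds the change of every extended coordinate by $C\sqrt\gamma$. So the difference of exponentials is at most $C\sqrt\gamma$. Summing against $q^{1-m}\phi(Q)J_{Q,e}$ gives $CD_e\sqrt\gamma\le C\sqrt\gamma$, since the punctured loads are $1\pm2\theta$ on the raw event; the resampled sample's loads are controlled in the same way. Off $\cD$, or off the resampled copy of $\cD$, we use the crude polynomial bound against $\PP(\cD^c)\le n^{-D_Z}+n^{-\text{raw}}$. The delicate points are two. First, the forced-flip path must stay inside the box where Proposition~\ref{prop:one-edge} applies; this is guaranteed because $\ell_0=m$ flips were built into $\mathcal R(q)$ and $\tau_n\ll a_0$. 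Second, we need $J_{Q,e}=1$ to be compatible with both endpoints being in $\cD$; if needed, we bound the joint exceptional probability by a union bound.

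Combining the three bounds gives $\|g_\phi(m_Y)\|_\infty\le C\sqrt\gamma$. Since $g_\phi(0)=0$, we have $\bigl(\int_0^1Dg_\phi(sm_Y)\,ds\bigr)m_Y=g_\phi(m_Y)$ with $\|m_Y\|_\infty\le\tau_n$. Corollary~\ref{cor:population-inverse} then yields $\|m_Y\|_\infty\le C\sqrt\gamma$; note that no centering is needed. Finally, $\|Y-m_Y\|_\infty\le2\tau_n$, the coordinate tails of Lemma~\ref{lem:cap-concentration}, and a union bound over $n^k$ edges (recall $D_Y>k+5$) give $\E\|Y-m_Y\|_\infty\le\tau_n/32+o(\tau_n)$. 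Taking expectations in \eqref{eq:cap-cutoff-identity}, we obtain
\[
 \bar Z(q)\le C\sqrt\gamma+\tau_n/32+o(\tau_n)+\tau_n n^{-D_Z}\le\tau_n/8,
\]
because $\sqrt\gamma/\tau_n=o(1)$.
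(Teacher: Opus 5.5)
Your proposal is correct and follows the paper's proof essentially step for step. The shared steps are the three-term split with the side-edge empirical map, the cutoff term controlled by the cap tail, side-edge resampling of each clique $Q\ni e$ to decouple $J_{Q,e}$, the variance-controlled Taylor step at the mean, inversion via Corollary~\ref{cor:population-inverse}, and the final expected-cap bound. The differences are cosmetic: the paper bounds the good-event resampling error with $\E[J_{Q,e}\one_{\cD}]\le q^{\binom{r}{k}-1}$ and seed balance rather than the punctured load, and it needs only the original sample in $\cD$ (Lemma~\ref{lem:cap-concentration} already controls every outcome of the fresh inputs), so your extra union bound over the resampled copy is harmless but unnecessary.
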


\begin{proof}
Fix $q$ with $\bar Z(q)\le\tau_n/3$. Write $Y=Y(q)$, $Z=Z(q)$,
$\cD=\cD(q)$, $\overline y=\E Y$, and abbreviate
$\gamma=\gamma_{k,r}(n,q)$.
Lemma~\ref{lem:cap-concentration} gives
$\PP(\cD^c)\le n^{-D_Z}+\PP(\mathcal R(q)^c)$.
The exponent choices and $\gamma\ge n^{-(r-k)}$ therefore imply
\begin{equation}\label{eq:mean-exception}
 n^{r-k}\PP(\cD^c)=o(\gamma).
\end{equation}

\proofstep{Step 1: Decouple the clique indicators.}

For a host edge $e$ and a deterministic vector $y$, put
\[
 \Lambda_e(y)=q^{1-\binom{r}{k}}
       \sum_{Q\ni e}\phi(Q)J_{Q,e}e^{y(E(Q))}-1.
\]
Then $\E\Lambda(y)=g_\phi(y)$. We use the model decomposition
\eqref{eq:mean-error-decomposition} with
$g_{\mathrm{emp}}=\Lambda$ and $g_{\mathrm{pop}}=g_\phi$.

The cutoff term is supported on $\{Z=\tau_n\}$: on its complement,
\eqref{eq:all-host-equation} gives $\Lambda_e(Y)=0$, while on the
cutoff event $Y=0$ and
$|\Lambda_e(0)|\le q^{1-\binom{r}{k}}+1\le2n^{r-k}$.
Since $\{Z=\tau_n\}\subseteq\cD^c$,
\eqref{eq:mean-exception} yields $|\E\Lambda_e(Y)|=O(\gamma)$.

For each $Q\ni e$, resample all inputs in $E(Q)\setminus\{e\}$
independently, and recompute the cutoff vector $Y^{[Q,e]}$.
It has the law of $Y$ and is independent of the original product
$J_{Q,e}$, because it uses fresh side-edge inputs and unchanged
inputs outside that set. Thus, unconditionally,
\[
 \E[J_{Q,e}e^{Y^{[Q,e]}(E(Q))}]
       =q^{\binom{r}{k}-1}\E e^{Y(E(Q))}.
\]
On $\cD$, the successive replacements remain below the cutoff,
as proved in Lemma~\ref{lem:cap-concentration}, and give
$\|Y-Y^{[Q,e]}\|_\infty\le C\sqrt\gamma$ for every outcome of the fresh
inputs. The exponential is Lipschitz on the bounded range of clique
sums. Since $\E[J_{Q,e}\one_{\cD}]\le q^{\binom{r}{k}-1}$, we obtain
\[
 \left|\E[J_{Q,e}e^{Y(E(Q))}]
       -q^{\binom{r}{k}-1}\E e^{Y(E(Q))}\right|
 \le C\sqrt\gamma q^{\binom{r}{k}-1}+C\PP(\cD^c).
\]
Multiply by $\phi(Q)q^{1-\binom{r}{k}}$ and sum over $Q\ni e$.
Seed balance, $q^{1-\binom{r}{k}}\le n^{r-k}$, and
\eqref{eq:mean-exception} give
\[
 \|\E g_\phi(Y)\|_\infty\le C\sqrt\gamma+C\gamma\le C\sqrt\gamma.
\]

\proofstep{Step 2: Pass to the mean and invert.}

The coordinate variance bound in Lemma~\ref{lem:cap-concentration}
implies, for each clique $Q$,
\[
 \Var\bigl(Y(E(Q))\bigr)
 \le\binom{r}{k}\sum_{f\in E(Q)}\Var Y_f\le C\gamma.
\]
Taylor expansion of the exponential about $\overline y(E(Q))$
has zero expected linear term and a remainder bounded by this
variance. Consequently
$\E e^{Y(E(Q))}=e^{\overline y(E(Q))}+O(\gamma)$.
Summing with the seed weights through each edge gives
$\|g_\phi(\overline y)\|_\infty\le C\sqrt\gamma$. Since $g_\phi(0)=0$,
\[
 g_\phi(\overline y)
   =\left(\int_0^1Dg_\phi(s\overline y)\,ds\right)\overline y.
\]
The mean satisfies $\|\overline y\|_\infty\le\tau_n$, so
Corollary~\ref{cor:population-inverse} applies to this averaged
Hessian and yields $\|\overline y\|_\infty\le C\sqrt\gamma$.

\proofstep{Step 3: Improve the expected cap.}

A union bound in Lemma~\ref{lem:cap-concentration} and
$\|Y-\overline y\|_\infty\le2\tau_n$ give
$\E\|Y-\overline y\|_\infty
 \le\tau_n/32+O(\tau_n n^{k-D_Y})$.
Taking expectations in the common identity
\eqref{eq:cap-cutoff-identity} now gives
\[
 \begin{aligned}
 \bar Z(q)&\le\|\overline y\|_\infty
    +\E\|Y-\overline y\|_\infty+\tau_n\PP(Z=\tau_n)\\
 &\le C\sqrt\gamma+\tau_n/32+O(\tau_n n^{k-D_Y})
       +\tau_n n^{-D_Z}\le\tau_n/8.
 \end{aligned}
\]
The last inequality holds uniformly in $q$ for sufficiently large
$n$, since $\sqrt\gamma=o(\tau_n)$ and the tail exponents exceed $k$.
This proves the second assertion.\qedhere

\end{proof}

\begin{proof}[Proof of Proposition~\ref{prop:bootstrap-inputs}]
Choose the constants in the order specified at the start of
Section~\ref{sec:clique-concentration}. Under $\bar Z(q)\le \tau_n/3$, Lemma~\ref{lem:cap-concentration}
gives $\PP(Z(q)\ge \tau_n/2)\le n^{-D_Z}\le n^{-b}$.
Proposition~\ref{prop:mean-improvement} gives
$\bar Z(q)\le\tau_n/8$. These estimates are uniform for each deterministic
$q\in[p,1]$, as required.
\end{proof}

\medskip
\noindent\textbf{AI assistance.}
The author conceived the key ideas and used ChatGPT to assist with developing proofs and refining the exposition. The author takes full responsibility for the content of this paper.

\appendix
\section{The squared-overlap estimate}\label{app:squared-overlap}
This appendix proves Lemma~\ref{lem:root-square}. All counts are taken
in the complete ambient product space of Section~\ref{sec:counts}.
The near-threshold range uses the bounded larger-root envelope.
In the remaining range we apply Lemma~\ref{lem:polynomial-upper-tail}
to a polynomial counting pairs of overlapping cliques. Fully overlapping pairs are
included throughout.

\begin{proof}[Proof of Lemma~\ref{lem:root-square}]
Write $\gamma=\gamma_{k,r}(n,p)$ and let
$L=L_* =\mu\gamma$ and $M=\mu L=\mu^2\gamma$.
For $r=k+1$, use $N_0(e,j)\le1$, Lemma~\ref{lem:root-inputs}, and the
identity
$\sum_{j\ne e}I_jN_0(e,j) =\left(\binom{r}{k}-1\right)\operatorname{ext}(e)$
to obtain \eqref{eq:root-square}, since $M=\mu$.
For $r\ge k+2$ and $p\le p_*$, Lemma~\ref{lem:root-envelope} gives
$W=O(1)$ with any prescribed polynomial failure probability. The same
identity and Lemma~\ref{lem:root-inputs} then give
\[
 \sum_{j\ne e}(I_j+p)N_0(e,j)^2
 \le W\biggl[\Bigl(\binom{r}{k}-1\Bigr)\operatorname{ext}(e)
                   +p\sum_{j\ne e}N_0(e,j)\biggr]
 \le C\mu\le CM.
\]

It remains to consider $r\ge k+2$ and $p>p_*$. By the definitions in
Section~\ref{sec:counts}, $\mu,np^k\ge n^\alpha$, and hence
$M/L^2=1/\gamma\ge n^\alpha$.

Fix a root edge $e$ and denote the left-hand sum in
\eqref{eq:root-square} by $Z_e$. Replace every repeated indicator power
by the indicator itself, so $Z_e$ is a multilinear polynomial with
nonnegative coefficients, of degree at most $d=2\binom{r}{k}-k-2$.
Expand each square over ordered pairs $Q_1,Q_2$ of $r$-sets containing
$e,j$. Their intersection has size $s$ with $k+1\le s\le r$.
Let $\mathcal U=E(Q_1)\cup E(Q_2)$.
This marked pair contributes the two monomials
$\prod_{a\in\mathcal U\setminus\{e\}}I_a$ and
$p\prod_{a\in\mathcal U\setminus\{e,j\}}I_a$.
Both have expectation $p^{2\binom{r}{k}-\binom{s}{k}-1}$.
There are $O_{k,r}(n^{2r-s-k})$ such pairs at $e$ and only $O_{k,r}(1)$ choices
of their marked shared edge $j$. Consequently
\begin{equation*}
 \E Z_e\le C_{k,r}\sum_{s=k+1}^r
  n^{2r-s-k}p^{2\binom{r}{k}-\binom{s}{k}-1}
 =C_{k,r}\mu^2\sum_{s=k+1}^r\frac1{\mu_s(n,p)}\le C_{k,r}M,
\end{equation*}
with $\mu_s$ as defined in Section~\ref{sec:quantitative}.

We claim that $\E[\partial_T Z_e]\le C_{k,r}L^2$ for every
nonempty set $T$ of at most $d$ differentiated variables.
Here $T$ is a fixed set of ambient edge variables. Terms not containing
all variables in $T$ disappear on differentiation. For the others,
first specify the abstract overlap pattern, the marked edge $j$, and
the placement of the vertices of $T$ into that pattern. There are only
$O_{k,r}(1)$ possibilities: all patterns have at most $2r-k-1$ vertices and
$T$ has bounded size. The ambient locations of the vertices of $T$
are fixed by the derivative; one sums only over the remaining free
vertices, not over new choices of the differentiated edges. Inconsistent placements contribute zero.

The polynomial $Z_e$ does not depend on $I_e$, so derivatives with
$e\in T$ vanish. Otherwise choose an edge of $T$ and order the two
cliques so that the first contains it. The vertices of $e$ together with
the fixed derivative vertices give at least $k+1$ roots in the first
clique. Include all fixed derivative vertices belonging to this clique
in its root set, and call its size $t_1$. After this clique is placed,
root the second at all $s$ intersection vertices and all its fixed
derivative vertices; call this size $t_2$. Thus $k+1\le t_1,t_2\le r$.
Ignore all edge requirements internal to each of these root sets.
The remaining required edges from the two cliques are disjoint: those
of the second clique have a vertex outside the intersection.
Moreover every differentiated edge is internal to its corresponding
root set, so none of the remaining requirements is a differentiated
variable. The sum of the resulting probabilities over placements is
at most
\[
 C_{k,r}\bigl(n^{r-t_1}p^{\binom{r}{k}-\binom{t_1}{k}}\bigr)
       \bigl(n^{r-t_2}p^{\binom{r}{k}-\binom{t_2}{k}}\bigr)
 \le C_{k,r}L^2.
\]
For the second type of summand the edge $j$ is omitted. If it remains
among the displayed requirements, deleting it increases this upper
bound by at most $p^{-1}$, which is canceled by the coefficient $p$.
If it has already been dropped, that coefficient can simply be bounded
by one. This proves the derivative bound, including derivatives
of every order and completely overlapping clique pairs.

Apply Lemma~\ref{lem:polynomial-upper-tail} to $Z_e$ with target
scale $M$ and separation exponent $\delta=\alpha$.
The mean is at most $C_{k,r}M$, and every nonempty expected derivative
is at most $C_{k,r}L^2\le C_{k,r}Mn^{-\alpha}$.
The lemma therefore gives
$\PP(Z_e\ge C'_{k,r}M)\le\exp(-c_1n^{\alpha/d})$,
where $c_1>0$ depends only on $k,r$.
This is smaller than $n^{-D-k-2}$ for all sufficiently large $n$.
A union bound over the at most $n^k$ root edges completes the
dense-range proof. All constants and thresholds are uniform in the
deterministic density parameter.
\end{proof}

\end{document}